\documentclass[11pt]{article}
\normalsize
\usepackage{amsmath,amssymb,amsthm,mathtools}
\usepackage[hidelinks]{hyperref}
\hypersetup{
  pdftitle={Modified induction and a torsion-theoretic equivalence for relative BiHom-Hopf modules},
  pdfauthor={Qihao Jin},
  pdfkeywords={BiHom-Hopf algebra, relative BiHom-Hopf module, induction functor, coinvariants, torsion theory}
}

\theoremstyle{plain}
\newtheorem{theorem}{Theorem}[section]
\newtheorem{lemma}[theorem]{Lemma}
\newtheorem{proposition}[theorem]{Proposition}
\newtheorem{corollary}[theorem]{Corollary}

\theoremstyle{definition}
\newtheorem{definition}[theorem]{Definition}
\newtheorem{example}[theorem]{Example}
\newtheorem{remark}[theorem]{Remark}

\title{Modified induction and a torsion-theoretic equivalence for relative BiHom-Hopf modules}
\author{Qihao Jin\thanks{Haide College, Ocean University of China, Qingdao 266100, P. R. China. Email: \texttt{jinqihao@stu.ouc.edu.cn}}}
\date{}

\begin{document}
\maketitle

\begin{abstract}
We develop an induction theory for relative BiHom-Hopf modules, the special BiHom-Doi--Hopf case associated with the datum $(H,A,H)$ in which the final copy of $H$ carries its regular right $H$-module coalgebra structure. Let $H$ be a monoidal BiHom-Hopf algebra, let $A$ be a right $H$-BiHom-comodule algebra, and let $B=A^{\operatorname{co}H}$. We show that the balanced tensor product defines an induction functor $-\otimes_BA$ left adjoint to the coinvariant functor. If $H$ has a fixed Haar integral, untwisting yields a Haar identity for the BiHom setting and a canonical projection onto coinvariants. These constructions define a hereditary torsion theory with radical $\kappa$ and a torsion-free reflector $Q(M)=M/\kappa(M)$. The modified induction functor $Q(-\otimes_BA)$ then gives an equivalence between right $B$-BiHom-modules and torsion-free relative BiHom-Hopf modules generated by their coinvariants. Equal structure maps recover the corresponding Hom result, while identity structure maps recover the classical relative Hopf-module setting.
\end{abstract}

\noindent\textbf{Keywords.} BiHom-Hopf algebra; relative BiHom-Hopf module; induction functor; coinvariants; torsion theory; categorical equivalence.

\noindent\textbf{2020 Mathematics Subject Classification.} Primary 16T05; Secondary 18A40, 18E40.

\section{Introduction}

Hom-type structures twist classical algebraic identities by distinguished linear self-maps. Originating in deformation theory and monoidal category theory, Hom-algebras, coalgebras, bialgebras, and Hopf-type objects have since been studied extensively \cite{HartwigLarssonSilvestrov2006,MakhloufSilvestrov2008,MakhloufSilvestrov2010,CaenepeelGoyvaerts2011,Yau2009HomYBE,Yau2011HomYBE,Yau2012HomQuantumI}.

BiHom structures replace the single twisting map with two commuting maps. With bijective structure maps, they admit a monoidal interpretation as twists of ordinary algebraic structures. This viewpoint has motivated work on BiHom-associative algebras, bialgebras, Hopf algebras, Yang--Baxter operators, Yetter--Drinfeld categories, smash products, and related representation-theoretic constructions \cite{GrazianiMakhloufMeniniPanaite2015,FangLiu2018BiHomYBE,LiuMakhloufMeniniPanaite2020BiHomNovikov,LiuMakhloufMeniniPanaite2021BiHomPreLie,LiuShen2022BiHomYD,LuWangLiu2023BiHomLRSmash,ShenLiu2021BiHomSmash,WuZhang2022BiHomNonlinear,ZhangWangChen2024BiHomHopf,GaiLiSun2025,SunWangZhu2025}.

Doi--Hopf modules provide a common framework for modules and comodules over Hopf algebras and comodule algebras. In the classical setting, induction, coinvariants, and quotient constructions connect relative Hopf modules with Hopf--Galois theory and stable Clifford theory \cite{Sweedler1969,DoiTakeuchi1986,Doi1992,CaenepeelMilitaruZhu1997,Montgomery1993,DascalescuNastasescuRaianu2001,CaenepeelRaianuVanOystaeyen1994,VanOystaeyenZhang1996}. Hom analogs include monoidal Hom-Hopf algebras, Hom-Doi--Hopf modules, relative Hom-Hopf modules, induction functors, and duality constructions \cite{CaenepeelGoyvaerts2011,GuoZhangWang2014DoiHom,GuoZhangWang2015RelativeHom,JiaXu2022,SunWangZhang2025}. In the terminology of a general Doi datum $(H,A,C)$, the category studied below is the relative case $C=H$, with the regular right $H$-module coalgebra structure on $H$; we retain the notation $\mathcal M_A^H$ and occasionally the phrase ``BiHom-Doi--Hopf module'' to facilitate comparison with the Hom literature.

We extend this induction theory to the BiHom setting. Let $H$ be a monoidal BiHom-Hopf algebra, let $A$ be a right $H$-BiHom-comodule algebra, and put $B=A^{\operatorname{co}H}$. For every right $B$-BiHom-module $N$, we show that $N\otimes_BA$ is naturally a right relative BiHom-Hopf module and obtain the adjunction
\[
F=-\otimes_BA:\mathcal M_B\rightleftarrows\mathcal M_A^H:
G=(-)^{\operatorname{co}H},
\qquad F\dashv G.
\]

If $H$ admits a Haar integral, we construct a canonical projection from each $M\in\mathcal{M}_A^H$ onto its coinvariants and an associated subobject $\kappa(M)$. An untwisting argument transports the BiHom-Hopf structure to an ordinary Hopf algebra and yields the Haar identity needed in the BiHom setting. This identity allows us to prove that $M\mapsto\kappa(M)$ defines a hereditary torsion theory on $\mathcal{M}_A^H$ in the sense of Dickson \cite{Dickson1966}.

We then pass to the torsion-free factor and introduce the modified induction functor
\[
\overline{F}(N)=\overline{N\otimes_B A}=(N\otimes_B A)/\kappa(N\otimes_B A).
\]
Our main theorem shows that $\overline{F}$ and $G$ induce an equivalence between $\mathcal{M}_B$ and the full subcategory of torsion-free, $0$-generated objects in $\mathcal{M}_A^H$. Our argument parallels the Hom theory of Jia and Xu \cite{JiaXu2022}, but two independent structure maps must be tracked in every balancing and colinearity calculation. The key technical ingredient is the transported Haar identity in Lemma~\ref{lem:bihom-haar-identities}, which controls both twists in proving that $\kappa(M)$ is a subcomodule. We also make the descent and naturality arguments explicit and identify $Q$ as the torsion-free reflector. Equal structure maps recover the Hom case, while identity maps recover the classical relative Hopf-module case.

Section~2 fixes conventions, Section~3 develops induction and torsion theory, and Section~4 proves the equivalence.

\section{Preliminaries}

We work over a fixed field $\mathbb{k}$, and all tensor products are taken over $\mathbb{k}$. We use Sweedler notation $\Delta(c)=c_1\otimes c_2$ and $\rho_M(m)=m_{[0]}\otimes m_{[1]}$. Throughout, we work in the monoidal BiHom setting and assume that all structure maps are bijective. Following \cite{GrazianiMakhloufMeniniPanaite2015}, we denote the algebra twists by $\alpha,\beta$ and the coalgebra twists by $\omega=\alpha^{-1}$ and $\psi=\beta^{-1}$. In the tuples below, we suppress $\omega,\psi$ and express all coalgebra and comodule identities using $\alpha^{-1},\beta^{-1}$; this is only a notational convention.

\begin{definition}
A \textit{monoidal BiHom-associative algebra} is a tuple $(A,\mu_A,\alpha_A,\beta_A,1_A)$,
where $A$ is a $\mathbb{k}$-vector space, $\mu_A:A\otimes A\to A$ is a
linear map, $\mu_A(a\otimes b)=ab$, $\alpha_A,\beta_A\in
\operatorname{Aut}_{\mathbb{k}}(A)$ are commuting linear automorphisms, and
$1_A\in A$, such that, for all $a,b,c\in A$,
\begin{align}
&\alpha_A(ab)=\alpha_A(a)\alpha_A(b),
\qquad \beta_A(ab)=\beta_A(a)\beta_A(b), \label{eq:bihom-alg-mult}\\
&\alpha_A(a)(bc)=(ab)\beta_A(c), \label{eq:bihom-assoc}\\
&\alpha_A(1_A)=1_A,\qquad \qquad  \quad \,\,\,\,\,
\beta_A(1_A)=1_A, \label{eq:bihom-unit-stable}\\
&a1_A=\alpha_A(a), \qquad \qquad  \quad \,\,\,\,\,
1_Aa=\beta_A(a). \label{eq:bihom-unit}
\end{align}
\end{definition}

\begin{remark}
If $\alpha_A=\beta_A$, this definition specializes to a monoidal Hom-associative algebra. If $\alpha_A=\beta_A=\operatorname{id}_A$, it specializes to an ordinary unital associative algebra.
\end{remark}

\begin{definition}
A \textit{monoidal BiHom-coassociative coalgebra} is a tuple $(C,\Delta_C,\alpha_C,\beta_C,\varepsilon_C)$, where $C$ is a $\mathbb{k}$-vector space, $\Delta_C:C\to C\otimes C$ and $\varepsilon_C:C\to\mathbb{k}$ are linear maps, and $\alpha_C,\beta_C\in\operatorname{Aut}_{\mathbb{k}}(C)$ are commuting linear automorphisms, such that
\begin{align}
&\Delta_C\circ\alpha_C
=(\alpha_C\otimes\alpha_C)\circ\Delta_C, \qquad
\Delta_C\circ\beta_C
=(\beta_C\otimes\beta_C)\circ\Delta_C, \label{eq:bihom-coalg-comult}\\
&(\Delta_C\otimes\beta_C^{-1})\circ\Delta_C
=(\alpha_C^{-1}\otimes\Delta_C)\circ\Delta_C, \label{eq:bihom-coassoc}\\
&\varepsilon_C\circ\alpha_C
=\varepsilon_C, \qquad \qquad \qquad \qquad \,\,\,
\varepsilon_C\circ\beta_C
=\varepsilon_C, \label{eq:bihom-counit-inv}\\
&(\operatorname{id}_C\otimes\varepsilon_C)\circ\Delta_C
=\alpha_C^{-1},   \qquad \qquad \,\,
(\varepsilon_C\otimes\operatorname{id}_C)\circ\Delta_C
=\beta_C^{-1}. \label{eq:bihom-counit}
\end{align}
Equivalently, in Sweedler notation,
\[
\alpha_C^{-1}(c_1)\otimes c_{2\,1}\otimes c_{2\,2} = c_{1\,1}\otimes c_{1\,2}\otimes\beta_C^{-1}(c_2),
\]
and
\[
c_1\varepsilon_C(c_2)=\alpha_C^{-1}(c), \qquad \varepsilon_C(c_1)c_2=\beta_C^{-1}(c).
\]
\end{definition}

\begin{definition}
A \textit{monoidal BiHom-bialgebra} is a tuple $(H,\mu_H,\Delta_H,\alpha_H,\beta_H,1_H,\varepsilon_H)$ such that $(H,\mu_H,\alpha_H,\beta_H,1_H)$ is a monoidal BiHom-associative algebra, $(H,\Delta_H,\alpha_H,\beta_H,\varepsilon_H)$ is a monoidal BiHom-coassociative coalgebra, and $\Delta_H$ and $\varepsilon_H$ are BiHom-algebra morphisms. 
Explicitly, for all $h,g\in H$,
\begin{align}
\Delta_H(hg)&=h_1g_1\otimes h_2g_2,
&
\Delta_H(1_H)&=1_H\otimes1_H, \label{eq:bihom-bialg-delta}\\
\varepsilon_H(hg)&=\varepsilon_H(h)\varepsilon_H(g),
&
\varepsilon_H(1_H)&=1_{\mathbb{k}}. \label{eq:bihom-bialg-eps}
\end{align}
\end{definition}

\begin{definition}
A \textit{monoidal BiHom-Hopf algebra} is a monoidal BiHom-bialgebra
\[
(H,\mu_H,\Delta_H,\alpha_H,\beta_H,1_H,\varepsilon_H)
\]
together with a linear map $S_H:H\to H$, called the \textit{antipode},
such that
\begin{align}
S_H\circ\alpha_H&=\alpha_H\circ S_H,
&
S_H\circ\beta_H&=\beta_H\circ S_H, \label{eq:antipode-commutes}\\
S_H(h_1)h_2&=\varepsilon_H(h)1_H
=
h_1S_H(h_2),\qquad h\in H. \label{eq:antipode}
\end{align}
\end{definition}

\begin{definition}
Let $(A,\mu_A,\alpha_A,\beta_A,1_A)$ be a monoidal BiHom-associative
algebra. A \textit{right $A$-BiHom-module} is a tuple $(M,\alpha_M,\beta_M,\cdot)$, where $M$ is a $\mathbb{k}$-vector space,
$\alpha_M,\beta_M\in\operatorname{Aut}_{\mathbb{k}}(M)$ are commuting
linear automorphisms, and $\cdot:M\otimes A\to M$ is a linear map,
$m\otimes a\mapsto m\cdot a$, such that, for all $m\in M$ and
$a,b\in A$,
\begin{align}
\alpha_M(m\cdot a)
&=\alpha_M(m)\cdot\alpha_A(a),
&
\beta_M(m\cdot a)
&=\beta_M(m)\cdot\beta_A(a), \label{eq:right-module-structure}\\
\alpha_M(m)\cdot(ab)
&=(m\cdot a)\cdot\beta_A(b), \label{eq:right-module-assoc}\\
m\cdot1_A&=\alpha_M(m). \label{eq:right-module-unit}
\end{align}
\end{definition}

A morphism of right $A$-BiHom-modules
\[
f:(M,\alpha_M,\beta_M)\longrightarrow(N,\alpha_N,\beta_N)
\]
is a linear map $f:M\to N$ satisfying
\[
f\circ\alpha_M=\alpha_N\circ f,\qquad
f\circ\beta_M=\beta_N\circ f, \qquad
\text{and}\qquad
f(m\cdot a)=f(m)\cdot a
\]
for all $m\in M$ and $a\in A$. The category of right $A$-BiHom-modules
is denoted by $\mathcal{M}_A$.

\begin{definition}
Let $(C,\Delta_C,\alpha_C,\beta_C,\varepsilon_C)$ be a monoidal
BiHom-coassociative coalgebra. A \textit{right $C$-BiHom-comodule} is a
tuple $(M,\alpha_M,\beta_M,\rho_M)$, where $M$ is a $\mathbb{k}$-vector space,
$\alpha_M,\beta_M\in\operatorname{Aut}_{\mathbb{k}}(M)$ are commuting
linear automorphisms, and $\rho_M:M\to M\otimes C$ is a linear map,
$\rho_M(m)=m_{[0]}\otimes m_{[1]}$, such that
\begin{align}
(\alpha_M\otimes\alpha_C)\circ\rho_M
&=\rho_M\circ\alpha_M,
&
(\beta_M\otimes\beta_C)\circ\rho_M
&=\rho_M\circ\beta_M, \label{eq:right-comodule-structure}\\
(\alpha_M^{-1}\otimes\Delta_C)\circ\rho_M
&=(\rho_M\otimes\beta_C^{-1})\circ\rho_M, \label{eq:right-comodule-coassoc}\\
(\operatorname{id}_M\otimes\varepsilon_C)\circ\rho_M
&=\alpha_M^{-1}. \label{eq:right-comodule-counit}
\end{align}
\end{definition}

A morphism of right $C$-BiHom-comodules
\[
f:(M,\alpha_M,\beta_M,\rho_M)\longrightarrow
(N,\alpha_N,\beta_N,\rho_N)
\]
is a linear map $f:M\to N$ satisfying
\[
f\circ\alpha_M=\alpha_N\circ f,\qquad
f\circ\beta_M=\beta_N\circ f,\qquad 
\text{and}\qquad 
(f\otimes\operatorname{id}_C)\circ\rho_M=\rho_N\circ f.
\]
The category of right $C$-BiHom-comodules is denoted by
$\mathcal{M}^C$.

\begin{definition}
Let $H$ be a monoidal BiHom-bialgebra and let
$(M,\alpha_M,\beta_M,\rho_M)$ be a right $H$-BiHom-comodule. The
\textit{space of coinvariants} of $M$ is
\[
M^{\operatorname{co}H}
=
\{\,m\in M\mid
\rho_M(m)=\alpha_M^{-1}(m)\otimes1_H\,\}.
\]
\end{definition}

\begin{definition}
Let $H$ be a monoidal BiHom-Hopf algebra. A \textit{right
$H$-BiHom-comodule algebra} is a monoidal BiHom-associative algebra
$(A,\mu_A,\alpha_A,\beta_A,1_A)$ equipped with a right
$H$-BiHom-comodule structure $\rho_A:A\to A\otimes H$, written
$\rho_A(a)=a_{[0]}\otimes a_{[1]}$, that is multiplicative and unital:
\begin{align}
\rho_A(ab)
&=a_{[0]}b_{[0]}\otimes a_{[1]}b_{[1]},
\qquad a,b\in A, \label{eq:comodule-algebra-mult}\\
\rho_A(1_A)&=1_A\otimes1_H. \label{eq:comodule-algebra-unit}
\end{align}
\end{definition}

\begin{lemma}\label{lem:coinvariant-subalgebra}
Let $A$ be a right $H$-BiHom-comodule algebra. Then
\[
B:=A^{\operatorname{co}H}
=
\{\,a\in A\mid \rho_A(a)=\alpha_A^{-1}(a)\otimes1_H\,\}
\]
is a monoidal BiHom-subalgebra of $A$. In particular, the restrictions
\[
\alpha_B:=\alpha_A|_B,\qquad \beta_B:=\beta_A|_B
\]
are well-defined automorphisms of $B$.
\end{lemma}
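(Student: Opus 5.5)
We must show three things: $B$ is a linear subspace containing $1_A$; $B$ is closed under multiplication; and $\alpha_A,\beta_A$ restrict to automorphisms of $B$, i.e. $\alpha_A^{\pm1}(B)\subseteq B$ and $\beta_A^{\pm1}(B)\subseteq B$. Once these hold, the identities \eqref{eq:bihom-alg-mult}--\eqref{eq:bihom-unit} hold automatically in $B$ because they hold in $A$. So $(B,\mu_A|_B,\alpha_B,\beta_B,1_A)$ is a monoidal BiHom-subalgebra.

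\emph{Subspace and unit.} $B$ is the kernel of the linear map $a\mapsto\rho_A(a)-\alpha_A^{-1}(a)\otimes1_H$, so it is a subspace. By \eqref{eq:comodule-algebra-unit} and \eqref{eq:bihom-unit-stable} we have $\rho_A(1_A)=1_A\otimes1_H=\alpha_A^{-1}(1_A)\otimes1_H$, so $1_A\in B$.

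\emph{Stability under the twists.} Let $a\in B$. By \eqref{eq:right-comodule-structure},
\[
\rho_A(\alpha_A(a))=(\alpha_A\otimes\alpha_H)\rho_A(a)=a\otimes\alpha_H(1_H)=\alpha_A^{-1}(\alpha_A(a))\otimes1_H,
\]
using $\alpha_H(1_H)=1_H$. Similarly,
\[
\rho_A(\beta_A(a))=\beta_A\alpha_A^{-1}(a)\otimes1_H=\alpha_A^{-1}(\beta_A(a))\otimes1_H,
\]
since $\alpha_A$ and $\beta_A$ commute. For the inverses, the relations \eqref{eq:right-comodule-structure} can be rewritten as $\rho_A\circ\alpha_A^{-1}=(\alpha_A^{-1}\otimes\alpha_H^{-1})\circ\rho_A$, and likewise for $\beta$. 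Since $\alpha_H^{-1}(1_H)=1_H$ and $\beta_H^{-1}(1_H)=1_H$, the same computation gives $\alpha_A^{-1}(B)\subseteq B$ and $\beta_A^{-1}(B)\subseteq B$. Hence $\alpha_B$ and $\beta_B$ are bijections of $B$ with inverses $\alpha_A^{-1}|_B$ and $\beta_A^{-1}|_B$.

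\emph{Closure under multiplication.} This step needs the most care, because the twists must be tracked. For $a,b\in B$, \eqref{eq:comodule-algebra-mult} gives
\[
\rho_A(ab)=\alpha_A^{-1}(a)\alpha_A^{-1}(b)\otimes1_H1_H.
\]
By \eqref{eq:bihom-alg-mult} for $\alpha_A^{-1}$ (which is multiplicative because $\alpha_A$ is), the first factor is $\alpha_A^{-1}(ab)$. By \eqref{eq:bihom-unit}, $1_H1_H=\alpha_H(1_H)=1_H$. Therefore $\rho_A(ab)=\alpha_A^{-1}(ab)\otimes1_H$, and $ab\in B$.

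This completes the argument. The only subtle points are the consistent use of the inverse twists and the unit identity $1_H1_H=1_H$; no step is a genuine obstacle.
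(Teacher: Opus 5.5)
Your proposal is correct and follows essentially the same route as the paper's proof: closure under the product via \eqref{eq:comodule-algebra-mult}, and stability of $B$ under $\alpha_A^{\pm1},\beta_A^{\pm1}$ via the comodule structure compatibility \eqref{eq:right-comodule-structure}. You also spell out a few points the paper leaves implicit: that $B$ is a subspace, the check $1_A\in B$, multiplicativity of $\alpha_A^{-1}$, and $1_H1_H=\alpha_H(1_H)=1_H$.
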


\begin{proof}
Clearly $1_A\in B$. Let $b,c\in B$. Then
\[
\rho_A(bc)
=
b_{[0]}c_{[0]}\otimes b_{[1]}c_{[1]}
=
\alpha_A^{-1}(b)\alpha_A^{-1}(c)\otimes1_H
=
\alpha_A^{-1}(bc)\otimes1_H,
\]
so $bc\in B$.

If $b\in B$, then
\[
\rho_A(\alpha_A(b))
=
(\alpha_A\otimes\alpha_H)\rho_A(b)
=
b\otimes1_H
=
\alpha_A^{-1}(\alpha_A(b))\otimes1_H,
\]
hence $\alpha_A(b)\in B$. Similarly,
\[
\rho_A(\beta_A(b))
=
(\beta_A\otimes\beta_H)\rho_A(b)
=
\beta_A\alpha_A^{-1}(b)\otimes1_H
=
\alpha_A^{-1}\beta_A(b)\otimes1_H,
\]
so $\beta_A(b)\in B$.

To see that the restrictions are automorphisms of $B$, let $b\in B$. Then
\[
\rho_A(\alpha_A^{-1}(b))
=
(\alpha_A^{-1}\otimes\alpha_H^{-1})\rho_A(b)
=
\alpha_A^{-2}(b)\otimes1_H
=
\alpha_A^{-1}(\alpha_A^{-1}(b))\otimes1_H,
\]
thus $\alpha_A^{-1}(b)\in B$. Likewise,
\[
\rho_A(\beta_A^{-1}(b))
=
(\beta_A^{-1}\otimes\beta_H^{-1})\rho_A(b)
=
\beta_A^{-1}\alpha_A^{-1}(b)\otimes1_H
=
\alpha_A^{-1}\beta_A^{-1}(b)\otimes1_H,
\]
so $\beta_A^{-1}(b)\in B$.

Therefore, $B$ is a monoidal BiHom-subalgebra of $A$, and $\alpha_A|_B$ and $\beta_A|_B$ are automorphisms of $B$.
\end{proof}

\begin{definition}
Let $H$ be a monoidal BiHom-Hopf algebra and let $A$ be a right
$H$-BiHom-comodule algebra. A \textit{right relative BiHom-Hopf module}
(also called a right $(H,A)$-BiHom-Doi--Hopf module in the special
relative sense used here) is a tuple
$(\!M\!,\!\alpha_M\!,\!\beta_M\!,\!\cdot\!,\!\rho_M\!)$ such that
$(M,\alpha_M,\beta_M,\cdot)$ is a right $A$-BiHom-module,
$(M,\alpha_M,\beta_M,\rho_M)$ is a right $H$-BiHom-comodule, and the
following compatibility condition holds:
\begin{equation}\label{eq:bihom-doi-hopf-compatibility}
\rho_M(m\cdot a)
=
m_{[0]}\cdot a_{[0]}\otimes m_{[1]}a_{[1]},
\qquad m\in M,\ a\in A.
\end{equation}
\end{definition}

A morphism of right relative BiHom-Hopf modules commutes with both
structure maps and is both right $A$-BiHom-linear and right
$H$-BiHom-colinear. These modules form the category $\mathcal{M}_A^H$.

\begin{lemma}\label{lem:coinvariants-right-B-module}
Let $M\in\mathcal{M}_A^H$. Then $M^{\operatorname{co}H}$ is a right
$A^{\operatorname{co}H}$-BiHom-module.
\end{lemma}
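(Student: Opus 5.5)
The plan is to show that $N:=M^{\operatorname{co}H}$ is stable under $\alpha_M^{\pm1}$ and $\beta_M^{\pm1}$, and under the restricted action of $B=A^{\operatorname{co}H}$. Once this is done, the BiHom-module axioms \eqref{eq:right-module-structure}--\eqref{eq:right-module-unit} for $N$ over $B$ hold automatically, since they are restrictions of the corresponding identities in $M$ over $A$. Lemma~\ref{lem:coinvariant-subalgebra} already tells us that $B$ is a monoidal BiHom-subalgebra with $\alpha_B,\beta_B$ automorphisms. So the only real content is the two closure statements.

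\textbf{Stability under the twists.} Let $m\in N$. Using \eqref{eq:right-comodule-structure} together with $\alpha_H(1_H)=\beta_H(1_H)=1_H$, we get
\[
\rho_M(\alpha_M(m))=(\alpha_M\otimes\alpha_H)(\alpha_M^{-1}(m)\otimes1_H)=m\otimes1_H=\alpha_M^{-1}(\alpha_M(m))\otimes1_H .
\]
The same argument handles $\beta_M$, using that $\alpha_M$ and $\beta_M$ commute. For the inverses, one applies $(\alpha_M^{-1}\otimes\alpha_H^{-1})$ or $(\beta_M^{-1}\otimes\beta_H^{-1})$ and notes that $\alpha_H^{-1}(1_H)=\beta_H^{-1}(1_H)=1_H$. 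This is exactly parallel to the proof of Lemma~\ref{lem:coinvariant-subalgebra}. Hence $\alpha_N:=\alpha_M|_N$ and $\beta_N:=\beta_M|_N$ are commuting automorphisms of $N$.

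\textbf{Closure under the action.} For $m\in N$ and $b\in B$, apply the compatibility condition \eqref{eq:bihom-doi-hopf-compatibility}:
\[
\rho_M(m\cdot b)=m_{[0]}\cdot b_{[0]}\otimes m_{[1]}b_{[1]}
=\alpha_M^{-1}(m)\cdot\alpha_A^{-1}(b)\otimes 1_H1_H .
\]
Next, $1_H1_H=\alpha_H(1_H)=1_H$ by \eqref{eq:bihom-unit} and \eqref{eq:bihom-unit-stable}. Finally, $\alpha_M^{-1}(m)\cdot\alpha_A^{-1}(b)=\alpha_M^{-1}(m\cdot b)$ by \eqref{eq:right-module-structure} applied to $\alpha_M^{-1}$, which is legitimate because $\alpha_M^{-1}$ is also multiplicative once $\alpha_M$ is. Therefore $m\cdot b\in N$.

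No step is a genuine obstacle. The only point requiring care is the bookkeeping in the action computation, where one must invoke the inverse-twist multiplicativity rather than the forward one, and use the unit axiom to collapse $1_H1_H$.
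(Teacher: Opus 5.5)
Your proposal is correct and follows essentially the same route as the paper: you derive closure under the $B$-action from the relative compatibility condition together with $\alpha_M^{-1}(m\cdot b)=\alpha_M^{-1}(m)\cdot\alpha_A^{-1}(b)$, and stability under $\alpha_M^{\pm1},\beta_M^{\pm1}$ from the comodule structure identities and the fact that the twists fix $1_H$. You also make explicit two small points the paper leaves implicit, namely the collapse $1_H1_H=1_H$ and that the module axioms are inherited by restriction.
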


\begin{proof}
Let $m\in M^{\operatorname{co}H}$ and $b\in A^{\operatorname{co}H}$. Then
\[
\rho_M(m\cdot b)
=
m_{[0]}\cdot b_{[0]}\otimes m_{[1]}b_{[1]}
=
\alpha_M^{-1}(m)\cdot\alpha_A^{-1}(b)\otimes1_H.
\]
Since
\[
\alpha_M^{-1}(m\cdot b)=\alpha_M^{-1}(m)\cdot\alpha_A^{-1}(b),
\]
we get
\[
\rho_M(m\cdot b)=\alpha_M^{-1}(m\cdot b)\otimes1_H,
\]
hence $m\cdot b\in M^{\operatorname{co}H}$.

If $m\in M^{\operatorname{co}H}$, then
\[
\rho_M(\alpha_M(m))
=
(\alpha_M\otimes\alpha_H)\rho_M(m)
=
m\otimes1_H
=
\alpha_M^{-1}(\alpha_M(m))\otimes1_H,
\]
so $\alpha_M(m)\in M^{\operatorname{co}H}$. Similarly,
\[
\rho_M(\beta_M(m))
=
(\beta_M\otimes\beta_H)\rho_M(m)
=
\beta_M\alpha_M^{-1}(m)\otimes1_H
=
\alpha_M^{-1}\beta_M(m)\otimes1_H,
\]
hence $\beta_M(m)\in M^{\operatorname{co}H}$.

Moreover,
\[
\rho_M(\alpha_M^{-1}(m))
=
(\alpha_M^{-1}\otimes\alpha_H^{-1})\rho_M(m)
=
\alpha_M^{-2}(m)\otimes1_H
=
\alpha_M^{-1}(\alpha_M^{-1}(m))\otimes1_H,
\]
and
\[
\rho_M(\beta_M^{-1}(m))
=
(\beta_M^{-1}\otimes\beta_H^{-1})\rho_M(m)
=
\beta_M^{-1}\alpha_M^{-1}(m)\otimes1_H
=
\alpha_M^{-1}\beta_M^{-1}(m)\otimes1_H,
\]
so both $\alpha_M^{-1}$ and $\beta_M^{-1}$ preserve
$M^{\operatorname{co}H}$.

Therefore $M^{\operatorname{co}H}$ is a right
$A^{\operatorname{co}H}$-BiHom-module.
\end{proof}

\begin{definition}
Let $(H,\mu_H,\Delta_H,\alpha_H,\beta_H,1_H,\varepsilon_H,S_H)$ be a monoidal BiHom-Hopf algebra. A linear functional
$\lambda\in H^*$ is called a \textit{left integral} if
\begin{align}
h_1\lambda(h_2)&=\lambda(h)1_H,\quad (h\in H), \qquad \text{and}\qquad
\lambda\circ\alpha_H=\lambda, \,\,\,\,\lambda\circ\beta_H=\lambda. \label{eq:integral-invariant}
\end{align}
It is called a \textit{right integral} if
\begin{align}
\lambda(h_1)h_2&=\lambda(h)1_H,\quad (h\in H),\qquad \text{and}\qquad
\lambda\circ\alpha_H=\lambda, \,\,\,\,\lambda\circ\beta_H=\lambda.
\label{eq:right-integral-invariant}
\end{align}
A left or right integral is \textit{normalized} if
$\lambda(1_H)=1_{\mathbb{k}}$. A \textit{Haar integral} is a normalized
functional that is both a left and a right integral.
\end{definition}

\begin{lemma}\label{lem:categories-abelian}
The categories $\mathcal{M}_A$, $\mathcal{M}^H$, and
$\mathcal{M}_A^H$ are abelian. 
\end{lemma}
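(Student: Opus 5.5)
We prove that each category is abelian by showing it is closed under kernels, cokernels and finite direct products computed at the level of underlying vector spaces. We then inherit the abelian axioms from $\mathrm{Vect}_{\mathbb{k}}$. Concretely, for a morphism $f:M\to N$ in any of the three categories we take the linear kernel $\ker f\subseteq M$ and the linear cokernel $N/\operatorname{im} f$, and equip them with the restricted and induced structure maps. Because $f$ commutes with $\alpha,\beta$, the subspace $\ker f$ is stable under $\alpha_M^{\pm1},\beta_M^{\pm1}$. Likewise $\operatorname{im}f$ is stable under $\alpha_N^{\pm1},\beta_N^{\pm1}$. So the restricted and induced maps are again commuting automorphisms.

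For $\mathcal M_A$, $A$-linearity of $f$ shows $\ker f\cdot A\subseteq\ker f$ and $\operatorname{im}f\cdot A\subseteq\operatorname{im}f$. The identities \eqref{eq:right-module-structure}--\eqref{eq:right-module-unit} restrict to $\ker f$ and descend to $N/\operatorname{im}f$ because they hold in $M$ and $N$.

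For $\mathcal M^H$, the one point needing care is that $\rho_M$ maps $\ker f$ into $\ker f\otimes H$. We argue as follows. Since $(f\otimes\mathrm{id})\rho_M=\rho_N f$, we get $\rho_M(\ker f)\subseteq\ker(f\otimes\mathrm{id}_H)$. Over a field, $\ker(f\otimes\mathrm{id}_H)=\ker f\otimes H$, by exactness of $-\otimes_{\mathbb k}H$. Dually, $\rho_N(\operatorname{im}f)\subseteq \operatorname{im}f\otimes H$, so $\rho_N$ induces a coaction on $N/\operatorname{im}f$ via $(N/\operatorname{im}f)\otimes H\cong (N\otimes H)/(\operatorname{im}f\otimes H)$. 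Coassociativity \eqref{eq:right-comodule-coassoc}, counitality \eqref{eq:right-comodule-counit} and \eqref{eq:right-comodule-structure} then hold on the sub and quotient objects. This is because the relevant maps are restrictions or corestrictions of the ones on $M$ and $N$, and $\ker f\otimes H\otimes H\subseteq M\otimes H\otimes H$ is injective (again flatness over $\mathbb k$).

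For $\mathcal M_A^H$ we combine the two previous cases. The compatibility \eqref{eq:bihom-doi-hopf-compatibility} on $\ker f$ is inherited by restriction. On $N/\operatorname{im} f$ it follows by projecting the identity in $N$.

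In all three categories, direct sums $M\oplus N$ carry componentwise structures, and the zero space is a zero object. It remains to check that the canonical vector-space kernels and cokernels satisfy the universal properties in the category. If $g:L\to M$ is a morphism with $fg=0$, the linear factorization through $\ker f$ automatically commutes with all structure, since the inclusion is injective and structure-preserving. The dual statement holds for cokernels via the surjection $N\to N/\operatorname{im}f$. Finally, every monomorphism is a kernel and every epimorphism is a cokernel, i.e.\ $\operatorname{coim}f\to\operatorname{im}f$ is an isomorphism. This holds because the forgetful functor to $\mathrm{Vect}_{\mathbb k}$ preserves kernels and cokernels as constructed. Hence $M/\ker f\to\operatorname{im}f$ is a linear bijection commuting with all structure maps, and its linear inverse is automatically a morphism.

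The only step with real content is the comodule part: $\rho(\ker f)\subseteq\ker f\otimes H$ and the descent of $\rho$ to the quotient. Both rely on the base being a field, so I would state that flatness argument explicitly. Everything else is routine verification that the BiHom identities survive restriction and projection.
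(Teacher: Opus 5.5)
Your proposal is correct and follows essentially the same route as the paper. Kernels, cokernels and finite biproducts are formed on the underlying vector spaces, and stability under $\alpha^{\pm1},\beta^{\pm1}$ and under the action and coaction comes from the linearity and colinearity of $f$ (with exactness of $-\otimes_{\mathbb k}H$ for the coaction). The coimage--image map is then the vector-space isomorphism, which automatically preserves all structure. Your version spells out the flatness step $\ker(f\otimes\operatorname{id}_H)=\ker f\otimes H$ that the paper only cites; you could also state explicitly, as the paper does, that the Hom-sets are $\mathbb k$-subspaces, so the categories are additive.
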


\begin{proof}
All three categories are $\mathbb{k}$-linear and have finite biproducts,
formed on the underlying vector spaces.  Let $f:M\to N$ be a morphism
in any one of these categories.  Because $f$ commutes with each
structure automorphism and its inverse, $\ker f$ and $\operatorname{im}f$
are stable under $\alpha^{\pm1}$ and $\beta^{\pm1}$, while the same maps
induce automorphisms on $\operatorname{coker}f$.  The module action and
the comodule coaction restrict to the kernel and image and descend to
the cokernel.  This follows directly from $A$-linearity and
$H$-colinearity; for the coaction, one uses the exactness of tensoring
with the $\mathbb{k}$-vector space $H$.  The BiHom module, comodule, and
relative compatibility identities are inherited by these subspaces and
quotients.  Thus kernels and cokernels are created by the forgetful
functor to vector spaces.  In particular, monomorphisms and
epimorphisms are exactly the morphisms whose underlying linear maps are
injective and surjective, respectively.  The canonical map from the coimage of $f$ to
its image is consequently the usual vector-space isomorphism, and it
preserves all the displayed structures.  Hence each category is
abelian.
\end{proof}

\begin{definition}
Let $\mathcal{C}$ be an abelian category. A \textit{torsion theory} in
$\mathcal{C}$ is a pair $(\mathcal{U},\mathcal{V})$ of full replete
subcategories satisfying the following conditions:
\begin{enumerate}
\item For every object $X\in\mathcal{C}$, there exists a short exact
sequence
\[
0\longrightarrow T\longrightarrow X\longrightarrow F\longrightarrow0
\]
with $T\in\mathcal{U}$ and $F\in\mathcal{V}$.
\item For all $T\in\mathcal{U}$ and $F\in\mathcal{V}$, every morphism
$T\to F$ is zero.
\end{enumerate}
The subcategory $\mathcal{U}$ is called the \textit{torsion class}, and
$\mathcal{V}$ is called the \textit{torsion-free class}. The torsion
theory is called \textit{hereditary} if $\mathcal{U}$ is closed under
subobjects.
\end{definition}

\section{Induction functors and torsion theory}

\subsection{Induction functors for relative BiHom-Hopf modules}

Throughout Sections~3 and~4, we fix a monoidal BiHom-Hopf algebra
$H$, a right $H$-BiHom-comodule algebra $A$, and the coinvariant
subalgebra $B=A^{\operatorname{co}H}$.  In this subsection, we construct
the induction functor and prove that it is left adjoint to the
coinvariant functor.

\begin{definition}\label{def:balanced-tensor-product}
Let $(N,\alpha_N,\beta_N)$ be a right $B$-BiHom-module. Let $R$ be the
linear subspace of $N\otimes A$ generated by all elements
\[
(n\cdot b)\otimes \beta_A(a)-\alpha_N(n)\otimes ba,
\qquad n\in N,\ b\in B,\ a\in A.
\]
The \textit{right balanced tensor product} of $N$ and $A$ over $B$ is
defined by
\[
N\otimes_B A=(N\otimes A)/R.
\]
The class of $n\otimes a$ in $N\otimes_B A$ will be denoted by
$n\otimes_B a$.
Thus, in $N\otimes_B A$ we have
\begin{equation}\label{eq:balanced-relation}
(n\cdot b)\otimes_B\beta_A(a)
=
\alpha_N(n)\otimes_B ba,
\qquad n\in N,\ b\in B,\ a\in A.
\end{equation}
Equivalently, since $\beta_A$ is bijective,
\begin{equation}\label{eq:balanced-relation-equivalent}
(n\cdot b)\otimes_B a
=
\alpha_N(n)\otimes_B b\beta_A^{-1}(a),
\qquad n\in N,\ b\in B,\ a\in A.
\end{equation}
\end{definition}

\begin{remark}\label{rem:balanced-universal-property}
The terminology ``balanced tensor product'' refers to the following
universal property.  If $V$ is a vector space and
$t:N\times A\to V$ is bilinear with
\[
t(n\cdot b,\beta_A(a))=t(\alpha_N(n),ba)
\qquad(n\in N,\ b\in B,\ a\in A),
\]
then there is a unique linear map
$\widetilde t:N\otimes_BA\to V$ satisfying
$\widetilde t(n\otimes_Ba)=t(n,a)$.  This is precisely the universal
property of the quotient by $R$.
\end{remark}

\begin{lemma}\label{lem:balanced-structure-maps}
The linear maps $\alpha_N\otimes\alpha_A$ and $\beta_N\otimes\beta_A$
on $N\otimes A$ induce well-defined automorphisms of $N\otimes_B A$.
\end{lemma}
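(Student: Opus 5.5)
We need to show that $\alpha_N\otimes\alpha_A$ and $\beta_N\otimes\beta_A$ map the relation subspace $R$ of Definition~\ref{def:balanced-tensor-product} into itself, and that their inverses do as well. Since the spanning set of $R$ is indexed by all $(n,b,a)\in N\times B\times A$, it suffices to check that each of $\alpha_N\otimes\alpha_A$, $\beta_N\otimes\beta_A$, and their inverses sends a generator
\[
r(n,b,a)=(n\cdot b)\otimes\beta_A(a)-\alpha_N(n)\otimes ba
\]
to another generator of the same form. Once each map and its inverse preserve $R$, each descends to $N\otimes_BA$, the descended maps are mutually inverse, and the resulting maps are automorphisms.

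For $\alpha_N\otimes\alpha_A$, I will use the module structure identity \eqref{eq:right-module-structure} for $N$ over $B$ (with $\alpha_B=\alpha_A|_B$), multiplicativity \eqref{eq:bihom-alg-mult} of $\alpha_A$, and the fact that $\alpha_A$ and $\beta_A$ commute. These give
\[
(\alpha_N\otimes\alpha_A)\,r(n,b,a)=(\alpha_N(n)\cdot\alpha_A(b))\otimes\beta_A(\alpha_A(a))-\alpha_N(\alpha_N(n))\otimes\alpha_A(b)\alpha_A(a).
\]
This is exactly $r(\alpha_N(n),\alpha_A(b),\alpha_A(a))$. It is a legitimate generator because Lemma~\ref{lem:coinvariant-subalgebra} gives $\alpha_A(b)\in B$.

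The case $\beta_N\otimes\beta_A$ is identical: using that $\beta_N$ commutes with $\alpha_N$, it yields $r(\beta_N(n),\beta_A(b),\beta_A(a))$.

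For the inverses, I apply the same identities to the preimages. Since $\alpha_N(m\cdot c)=\alpha_N(m)\cdot\alpha_A(c)$, we get $\alpha_N^{-1}(m\cdot c)=\alpha_N^{-1}(m)\cdot\alpha_A^{-1}(c)$, and similarly for $\beta$. Also $\alpha_A^{-1}$ and $\beta_A^{-1}$ are multiplicative and commute with $\alpha_N,\beta_A$ appropriately. Then
\[
(\alpha_N^{-1}\otimes\alpha_A^{-1})\,r(n,b,a)=r\bigl(\alpha_N^{-1}(n),\alpha_A^{-1}(b),\alpha_A^{-1}(a)\bigr),
\]
and analogously for $\beta$. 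Here $\alpha_A^{-1}(b),\beta_A^{-1}(b)\in B$, again by Lemma~\ref{lem:coinvariant-subalgebra}.

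The only step needing care is this membership of the twisted $b$ in $B$, which is supplied by the earlier lemma, together with keeping track of commutation between $\alpha$ and $\beta$. I do not expect a genuine obstacle.

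Finally, the induced maps commute with each other because $\alpha_N\otimes\alpha_A$ and $\beta_N\otimes\beta_A$ already commute on $N\otimes A$. The induced maps are given by $n\otimes_Ba\mapsto\alpha_N(n)\otimes_B\alpha_A(a)$ and $n\otimes_Ba\mapsto\beta_N(n)\otimes_B\beta_A(a)$.
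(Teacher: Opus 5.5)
Your proposal is correct and takes the same approach as the paper. Both arguments check that $\alpha_N^{\pm1}\otimes\alpha_A^{\pm1}$ and $\beta_N^{\pm1}\otimes\beta_A^{\pm1}$ send each generator $r(n,b,a)$ of $R$ to a generator with twisted arguments, relying on Lemma~\ref{lem:coinvariant-subalgebra} to keep the twisted $b$ inside $B$. You also spell out the $\beta$-case, where $\alpha_N\beta_N=\beta_N\alpha_N$ is used, and the commutation of the induced maps; the paper calls the former ``analogous'' and uses the latter only implicitly in Proposition~\ref{prop:induction-object-well-defined}.
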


\begin{proof}
It is enough to show that the defining subspace $R$ is stable under $\alpha_N^{\pm1}\otimes\alpha_A^{\pm1}$ and $\beta_N^{\pm1}\otimes\beta_A^{\pm1}$.

For the map $\alpha_N\otimes\alpha_A$, take a generator $r=(n\cdot b)\otimes\beta_A(a)-\alpha_N(n)\otimes ba$. Then
\[
(\alpha_N\otimes\alpha_A)(r)
=
(\alpha_N(n)\cdot\alpha_B(b))\otimes\beta_A(\alpha_A(a))
-\alpha_N^2(n)\otimes\alpha_B(b)\alpha_A(a),
\]
which is again a generator of $R$.

For the inverse map $\alpha_N^{-1}\otimes\alpha_A^{-1}$, we get
\[
(\alpha_N^{-1}\otimes\alpha_A^{-1})(r)
=
(\alpha_N^{-1}(n)\cdot\alpha_B^{-1}(b))\otimes\beta_A(\alpha_A^{-1}(a))
-n\otimes \alpha_B^{-1}(b)\alpha_A^{-1}(a),
\]
again a generator of $R$.

The verification for $\beta_N^{\pm1}\otimes\beta_A^{\pm1}$ is analogous.
Therefore, $\alpha_N\otimes\alpha_A$ and $\beta_N\otimes\beta_A$ descend
to well-defined automorphisms of $N\otimes_BA$.
\end{proof}

Henceforth, write
$\alpha_{N\otimes_B A}:=\alpha_N\otimes\alpha_A$ and
$\beta_{N\otimes_B A}:=\beta_N\otimes\beta_A$.

Let $N\in\mathcal{M}_B$. On $N\otimes_B A$ we define
\begin{align}
\alpha_{F(N)}(n\otimes_B a)
&=\alpha_N(n)\otimes_B\alpha_A(a), \label{eq:F-alpha}\\
\beta_{F(N)}(n\otimes_B a)
&=\beta_N(n)\otimes_B\beta_A(a), \label{eq:F-beta}\\
(n\otimes_B a)\cdot c
&=\alpha_N(n)\otimes_B a\beta_A^{-1}(c),
\qquad c\in A, \label{eq:F-action}\\
\rho_{F(N)}(n\otimes_B a)
&=
\alpha_N^{-1}(n)\otimes_B a_{[0]}\otimes\beta_H(a_{[1]}).
\label{eq:F-coaction}
\end{align}

\begin{proposition}\label{prop:induction-object-well-defined}

$N\otimes_B A$ is a right relative BiHom-Hopf module over $(H,A)$.
\end{proposition}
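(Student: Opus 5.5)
The plan has three stages: show the action \eqref{eq:F-action} and the coaction \eqref{eq:F-coaction} are well defined on $N\otimes_BA$; check the BiHom module and comodule axioms; then check the compatibility \eqref{eq:bihom-doi-hopf-compatibility}. Lemma~\ref{lem:balanced-structure-maps} already shows that $\alpha_{F(N)},\beta_{F(N)}$ are well-defined commuting automorphisms. So the new work is that both maps descend through the quotient by $R$. For each, I will use the universal property of Remark~\ref{rem:balanced-universal-property}.

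\textbf{Well-definedness of the action.} Fix $c\in A$ and set $t(n,a)=\alpha_N(n)\otimes_B a\beta_A^{-1}(c)$. I must show $t(n\cdot b,\beta_A(a))=t(\alpha_N(n),ba)$. The left side is $(\alpha_N(n)\cdot\alpha_B(b))\otimes_B\beta_A(a)\beta_A^{-1}(c)$. Rewrite $\beta_A(a)\beta_A^{-1}(c)=\beta_A(a\beta_A^{-2}(c))$ and apply \eqref{eq:balanced-relation}. This gives $\alpha_N^2(n)\otimes_B\alpha_A(b)(a\beta_A^{-2}(c))$. By \eqref{eq:bihom-assoc}, this equals $\alpha_N^2(n)\otimes_B (ba)\beta_A^{-1}(c)$, which is the right side.

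\textbf{Well-definedness of the coaction.} Set $t(n,a)=\alpha_N^{-1}(n)\otimes_Ba_{[0]}\otimes\beta_H(a_{[1]})$. For the left side, use $\rho_A(\beta_A a)=\beta_A(a_{[0]})\otimes\beta_H(a_{[1]})$ and then \eqref{eq:balanced-relation}. For the right side, use \eqref{eq:comodule-algebra-mult} with $\rho_A(b)=\alpha_A^{-1}(b)\otimes1_H$, together with $1_Ha_{[1]}=\beta_H(a_{[1]})$. Both sides should reduce to $n\otimes_B\alpha_A^{-1}(b)a_{[0]}\otimes\beta_H^2(a_{[1]})$ after applying $\alpha_N^{-1}(n\cdot b)=\alpha_N^{-1}(n)\cdot\alpha_B^{-1}(b)$.

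\textbf{Module and comodule axioms.} For the module axioms on representatives:
\begin{itemize}
\item multiplicativity of $\alpha_{F(N)}$ and $\beta_{F(N)}$ is immediate;
\item the unit axiom follows from $a\beta_A^{-1}(1_A)=a1_A=\alpha_A(a)$;
\item the associativity \eqref{eq:right-module-assoc} reduces to $\alpha_A(a)(\beta_A^{-1}(c)\beta_A^{-1}(d))=(a\beta_A^{-1}(c))d$, which is \eqref{eq:bihom-assoc} with last factor $\beta_A^{-1}(d)$.
\end{itemize}
For the comodule axioms:
\begin{itemize}
\item equivariance of $\rho_{F(N)}$ under $\alpha$ and $\beta$ follows from that of $\rho_A$ and the commutation of $\alpha_H$ with $\beta_H$;
\item the counit axiom gives $\alpha_N^{-1}(n)\otimes_B\alpha_A^{-1}(a)$, using $\varepsilon_H\beta_H=\varepsilon_H$;
\item for coassociativity, both sides reduce via \eqref{eq:right-comodule-coassoc} for $A$ and $\Delta_H\beta_H=(\beta_H\otimes\beta_H)\Delta_H$ to $\alpha_N^{-2}(n)\otimes_B a_{[0][0]}\otimes\beta_H(a_{[0][1]})\otimes a_{[1]}$.
\end{itemize}
Here the $\beta_H$ placements have to be tracked carefully.

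\textbf{Compatibility.} I expect \eqref{eq:bihom-doi-hopf-compatibility} to be the main obstacle, since the twists in the action and the coaction must cancel exactly. The left side is $\rho(\alpha_N(n)\otimes_B a\beta_A^{-1}(c))=n\otimes_B a_{[0]}\beta_A^{-1}(c_{[0]})\otimes\beta_H(a_{[1]}\beta_H^{-1}(c_{[1]}))$. The right side is $(\alpha_N^{-1}(n)\otimes_Ba_{[0]})\cdot c_{[0]}\otimes\beta_H(a_{[1]})c_{[1]}$, which by \eqref{eq:F-action} is $n\otimes_B a_{[0]}\beta_A^{-1}(c_{[0]})\otimes\beta_H(a_{[1]})c_{[1]}$. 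These agree by multiplicativity of $\beta_H$. I will double-check this step, and if a mismatch in twists appears, the first thing to revisit is the convention $\beta_H(a_{[1]})$ in \eqref{eq:F-coaction}.
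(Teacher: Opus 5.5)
Your proposal is correct and follows the paper's proof essentially step for step. It uses the same balancing computations (BiHom-associativity for the action; $\rho_A(b)=\alpha_A^{-1}(b)\otimes 1_H$ together with $1_Hh=\beta_H(h)$ for the coaction), the same module and comodule axiom checks, and the same compatibility calculation, which does close by multiplicativity of $\beta_H$ with no change of convention needed. Phrasing well-definedness through the universal property of Remark~\ref{rem:balanced-universal-property} is just a repackaging of the paper's check on the generators of $R$.
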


\begin{proof}
We divide the proof into several verifications.
By Lemma~\ref{lem:balanced-structure-maps}, the two induced structure
maps are commuting automorphisms.

First, the right $A$-action is well-defined. For a generator of the
balanced relation, we compute, for $c\in A$,
\begin{align*}
\bigl((n\cdot b)\otimes_B\beta_A(a)\bigr)\cdot c
&=
\alpha_N(n\cdot b)\otimes_B \beta_A(a)\beta_A^{-1}(c)       \\
&=
\alpha_N(n)\cdot\alpha_B(b)\otimes_B
\beta_A\bigl(a\beta_A^{-2}(c)\bigr)                         \\
&=
\alpha_N^2(n)\otimes_B
\alpha_B(b)\bigl(a\beta_A^{-2}(c)\bigr)                      \\
&=
\alpha_N^2(n)\otimes_B (ba)\beta_A^{-1}(c)                   \\
&=
\bigl(\alpha_N(n)\otimes_B ba\bigr)\cdot c.
\end{align*}
In the third equality, we used the balancing relation, and in the fourth
one, the BiHom-associativity identity $\alpha_A(b)(a\beta_A^{-2}(c))=(ba)\beta_A^{-1}(c)$. Thus, the action descends to $N\otimes_B A$.

The BiHom-module identities follow directly from
Definition~\ref{def:balanced-tensor-product} and the BiHom-associativity
of $A$.  Compatibility with the two structure maps is explicit:
\begin{align*}
\alpha_{F(N)}\bigl((n\otimes_Ba)\cdot c\bigr)
&=\alpha_N^2(n)\otimes_B
  \alpha_A(a)\alpha_A\beta_A^{-1}(c)\\
&=\alpha_{F(N)}(n\otimes_Ba)\cdot\alpha_A(c),\\
\beta_{F(N)}\bigl((n\otimes_Ba)\cdot c\bigr)
&=\alpha_N\beta_N(n)\otimes_B\beta_A(a)c\\
&=\beta_{F(N)}(n\otimes_Ba)\cdot\beta_A(c).
\end{align*}
For BiHom-associativity of the action, we compute
\begin{align*}
\alpha_{F(N)}(n\otimes_B a)\cdot (cd)
&=
\alpha_N^2(n)\otimes_B
\alpha_A(a)\beta_A^{-1}(cd)                                  \\
&=
\alpha_N^2(n)\otimes_B
\alpha_A(a)\bigl(\beta_A^{-1}(c)\beta_A^{-1}(d)\bigr),
\end{align*}
whereas
\begin{align*}
\bigl((n\otimes_B a)\cdot c\bigr)\cdot\beta_A(d)
&=
\bigl(\alpha_N(n)\otimes_B a\beta_A^{-1}(c)\bigr)\cdot
\beta_A(d)                                                    \\
&=
\alpha_N^2(n)\otimes_B
\bigl(a\beta_A^{-1}(c)\bigr)d.
\end{align*}
These two elements are equal by the BiHom-associativity of $A$. The unit
identity is
\[
(n\otimes_B a)\cdot1_A
=
\alpha_N(n)\otimes_B a1_A
=
\alpha_N(n)\otimes_B\alpha_A(a)
=
\alpha_{F(N)}(n\otimes_B a).
\]

Second, the right $H$-coaction is well-defined. We check that it respects
the balancing relation. Since $b\in B$, we have $\rho_A(b)=\alpha_A^{-1}(b)\otimes1_H$. 
Then
\begin{align*}
\rho_{F(N)}\bigl((n\cdot b)\otimes_B\beta_A(a)\bigr)
&=
\alpha_N^{-1}(n\cdot b)\otimes_B
\beta_A(a_{[0]})\otimes\beta_H^2(a_{[1]})                    \\
&=
\bigl(\alpha_N^{-1}(n)\cdot\alpha_B^{-1}(b)\bigr)\otimes_B
\beta_A(a_{[0]})\otimes\beta_H^2(a_{[1]})                    \\
&=
n\otimes_B \alpha_B^{-1}(b)a_{[0]}\otimes\beta_H^2(a_{[1]}).
\end{align*}
On the other hand,
\begin{align*}
\rho_{F(N)}\bigl(\alpha_N(n)\otimes_B ba\bigr)
&=
n\otimes_B (ba)_{[0]}\otimes\beta_H((ba)_{[1]})               \\
&=
n\otimes_B \alpha_B^{-1}(b)a_{[0]}
   \otimes\beta_H(1_Ha_{[1]})                                 \\
&=
n\otimes_B \alpha_B^{-1}(b)a_{[0]}
   \otimes\beta_H^2(a_{[1]}).
\end{align*}
Here, we used the comodule algebra identity and the BiHom-unit identity
$1_Hh=\beta_H(h)$. Hence, the coaction is well-defined.

The coaction is compatible with both structure maps.  Indeed,
\begin{align*}
\rho_{F(N)}\alpha_{F(N)}(n\otimes_Ba)
&=n\otimes_B\alpha_A(a_{[0]})
  \otimes\alpha_H\beta_H(a_{[1]})\\
&=(\alpha_{F(N)}\otimes\alpha_H)
  \rho_{F(N)}(n\otimes_Ba),\\
\rho_{F(N)}\beta_{F(N)}(n\otimes_Ba)
&=\alpha_N^{-1}\beta_N(n)\otimes_B\beta_A(a_{[0]})
  \otimes\beta_H^2(a_{[1]})\\
&=(\beta_{F(N)}\otimes\beta_H)
  \rho_{F(N)}(n\otimes_Ba).
\end{align*}
We next spell out the coassociativity condition. For
$n\in N$ and $a\in A$,
\begin{align*}
(\alpha_{F(N)}^{-1}\otimes\Delta_H)\rho_{F(N)}(n\otimes_Ba)
&=
\alpha_N^{-2}(n)\otimes_B\alpha_A^{-1}(a_{[0]})
\otimes\beta_H(a_{[1]1})\otimes\beta_H(a_{[1]2}),
\end{align*}
whereas $(\rho_{F(N)}\otimes\beta_H^{-1})\rho_{F(N)}(n\otimes_Ba)= \alpha_N^{-2}(n)\otimes_B a_{[0][0]} \otimes\beta_H(a_{[0][1]})\otimes a_{[1]}$.
These two expressions are equal because the right $H$-BiHom-comodule
structure of $A$ satisfies $(\alpha_A^{-1}\otimes\Delta_H)\rho_A = (\rho_A\otimes\beta_H^{-1})\rho_A$, after applying $\operatorname{id}\otimes\beta_H\otimes\beta_H$ to the last two tensor factors. The counit identity follows from $(\operatorname{id}_A\otimes\varepsilon_H)\rho_A=\alpha_A^{-1}$.
More explicitly,
\[
(\operatorname{id}_{F(N)}\otimes\varepsilon_H)
\rho_{F(N)}(n\otimes_Ba)
=\alpha_N^{-1}(n)\otimes_B\alpha_A^{-1}(a)
=\alpha_{F(N)}^{-1}(n\otimes_Ba).
\]

Finally, we verify the relative compatibility condition. For $c\in A$,
\begin{align*}
\rho_{F(N)}\bigl((n\otimes_Ba)\cdot c\bigr)
&=
\rho_{F(N)}\bigl(\alpha_N(n)\otimes_B a\beta_A^{-1}(c)\bigr)  \\
&=
n\otimes_B
\bigl(a\beta_A^{-1}(c)\bigr)_{[0]}
\otimes
\beta_H\bigl((a\beta_A^{-1}(c))_{[1]}\bigr)                  \\
&=
n\otimes_B
a_{[0]}\beta_A^{-1}(c_{[0]})
\otimes
\beta_H(a_{[1]})c_{[1]}.
\end{align*}
On the other hand,
\begin{align*}
(n\otimes_Ba)_{[0]}\cdot c_{[0]}
\otimes
(n\otimes_Ba)_{[1]}c_{[1]}
&=
\bigl(\alpha_N^{-1}(n)\otimes_Ba_{[0]}\bigr)\cdot c_{[0]}
\otimes
\beta_H(a_{[1]})c_{[1]}                                      \\
&=
n\otimes_Ba_{[0]}\beta_A^{-1}(c_{[0]})
\otimes
\beta_H(a_{[1]})c_{[1]}.
\end{align*}
Thus, the compatibility condition holds. Hence $N\otimes_BA$ is an object
of $\mathcal{M}_A^H$.
\end{proof}

The \textit{induced functor}
\[
F=-\otimes_BA:\mathcal{M}_B\longrightarrow\mathcal{M}_A^H
\]
is defined as follows: $F(N)=N\otimes_BA$ and if
$f:N\to N'$ is a morphism in $\mathcal{M}_B$, we define
\[
F(f):N\otimes_BA\longrightarrow N'\otimes_BA,
\qquad
F(f)(n\otimes_Ba)=f(n)\otimes_Ba.
\]

\begin{proposition}\label{prop:induction-functor}
The assignment $F:\mathcal{M}_B\longrightarrow\mathcal{M}_A^H$ is a well-defined functor.
\end{proposition}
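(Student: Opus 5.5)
Since Proposition~\ref{prop:induction-object-well-defined} already gives $F(N)\in\mathcal M_A^H$, the work is at the level of morphisms. Let $f:N\to N'$ be a morphism in $\mathcal M_B$. I would carry out four steps.

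\textbf{Step 1: $F(f)$ is well defined.} The linear map $f\otimes\operatorname{id}_A:N\otimes A\to N'\otimes A$ must send the relation subspace $R_N$ into $R_{N'}$. On a generator, $B$-linearity of $f$ and $f\circ\alpha_N=\alpha_{N'}\circ f$ give
\[
(f\otimes\operatorname{id}_A)\bigl((n\cdot b)\otimes\beta_A(a)-\alpha_N(n)\otimes ba\bigr)
=(f(n)\cdot b)\otimes\beta_A(a)-\alpha_{N'}(f(n))\otimes ba.
\]
This is again a generator of $R_{N'}$. Equivalently, by Remark~\ref{rem:balanced-universal-property}, the bilinear map $(n,a)\mapsto f(n)\otimes_Ba$ is balanced. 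Hence $F(f)$ descends to the quotients.

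\textbf{Step 2: $F(f)$ is a morphism in $\mathcal M_A^H$.} It suffices to check the required identities on generators $n\otimes_Ba$, since these span $N\otimes_BA$.
\begin{itemize}
\item Commutation with the structure maps follows from $f\alpha_N=\alpha_{N'}f$ and $f\beta_N=\beta_{N'}f$, using \eqref{eq:F-alpha} and \eqref{eq:F-beta}.
\item $A$-linearity follows from \eqref{eq:F-action}:
\[
F(f)\bigl((n\otimes_Ba)\cdot c\bigr)=f(\alpha_N(n))\otimes_B a\beta_A^{-1}(c)=\alpha_{N'}(f(n))\otimes_B a\beta_A^{-1}(c)=F(f)(n\otimes_Ba)\cdot c.
\]
\item Colinearity follows from \eqref{eq:F-coaction}, using $f\alpha_N^{-1}=\alpha_{N'}^{-1}f$. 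This identity holds because $f$ commutes with $\alpha$ and $\alpha$ is bijective.
\end{itemize}

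\textbf{Step 3: functoriality.} The equalities $F(\operatorname{id}_N)=\operatorname{id}_{F(N)}$ and $F(g\circ f)=F(g)\circ F(f)$ are immediate on generators $n\otimes_Ba$.

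The only step needing care is Step~1, where balancing must be preserved. Even there the check is a single line once we note that morphisms in $\mathcal M_B$ commute with $\alpha_N$ and with the action. Colinearity likewise only requires commutation with $\alpha_N^{-1}$, which follows from bijectivity, so I expect no genuine obstacle.
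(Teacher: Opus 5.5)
Your proposal is correct and follows essentially the same route as the paper. Both arguments check that $F(f)$ respects the balancing relation (your version, which sends generators of $R_N$ to generators of $R_{N'}$, is slightly more explicit than the paper's), then verify compatibility with the structure maps, $A$-linearity, $H$-colinearity via $f\alpha_N^{-1}=\alpha_{N'}^{-1}f$, and functoriality on generators $n\otimes_B a$.
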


\begin{proof}
We first check that $F(f)$ is well-defined. If $(n\cdot b)\otimes_B\beta_A(a)=\alpha_N(n)\otimes_Bba$, then
\begin{align*}
F(f)\bigl((n\cdot b)\otimes_B\beta_A(a)\bigr)
&=
f(n\cdot b)\otimes_B\beta_A(a)                                \\
&=
f(n)\cdot b\otimes_B\beta_A(a)                                 \\
&=
\alpha_{N'}(f(n))\otimes_Bba                                   \\
&=
F(f)\bigl(\alpha_N(n)\otimes_Bba\bigr).
\end{align*}
Thus $F(f)$ is well-defined. Because $f$ commutes with the two
structure maps, so does $F(f)$. It is right $A$-linear, because
\begin{align*}
F(f)\bigl((n\otimes_Ba)\cdot c\bigr)
&=
F(f)\bigl(\alpha_N(n)\otimes_Ba\beta_A^{-1}(c)\bigr)            \\
&=
\alpha_{N'}(f(n))\otimes_Ba\beta_A^{-1}(c)                      \\
&=
F(f)(n\otimes_Ba)\cdot c.
\end{align*}
It is right $H$-colinear, since
\begin{align*}
\rho_{F(N')}\bigl(F(f)(n\otimes_Ba)\bigr)
&=
\rho_{F(N')}(f(n)\otimes_Ba)                                   \\
&=
\alpha_{N'}^{-1}(f(n))\otimes_Ba_{[0]}\otimes\beta_H(a_{[1]})   \\
&=
f(\alpha_N^{-1}(n))\otimes_Ba_{[0]}\otimes\beta_H(a_{[1]})      \\
&=
(F(f)\otimes\operatorname{id}_H)\rho_{F(N)}(n\otimes_Ba).
\end{align*}
Therefore $F(f)$ is a morphism in $\mathcal{M}_A^H$. Functoriality,
namely $F(\operatorname{id}_N)=\operatorname{id}_{F(N)}$ and
$F(gf)=F(g)F(f)$, is immediate from the definition.
\end{proof}

The \textit{coinvariant functor}
\[
G=(-)^{\operatorname{co}H}:\mathcal{M}_A^H\longrightarrow\mathcal{M}_B
\]
is defined by
\[
G(M)=M^{\operatorname{co}H}
=
\{\,m\in M\mid \rho_M(m)=\alpha_M^{-1}(m)\otimes1_H\,\}
\quad \text{and} \quad
G(f)=f|_{M^{\operatorname{co}H}}.
\]

Lemma~\ref{lem:coinvariants-right-B-module} shows that $G(M)$ is an
object of $\mathcal M_B$.  Colinearity sends coinvariants to
coinvariants, and the restriction of an $A$-linear morphism is
$B$-linear; hence the displayed assignment is a well-defined functor.

\begin{theorem}\label{thm:induction-coinvariant-adjunction}
The induced functor $F$ is left adjoint to the coinvariant functor $G$.
\end{theorem}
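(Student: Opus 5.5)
I will prove $F\dashv G$ by writing down a unit $\eta:\mathrm{id}_{\mathcal M_B}\Rightarrow GF$ and a counit $\varepsilon:FG\Rightarrow\mathrm{id}_{\mathcal M_A^H}$, and then checking the two triangle identities. The maps must be corrected by structure maps so that they are compatible with the twisted unit laws and the coaction formula \eqref{eq:F-coaction}. For $N\in\mathcal M_B$ I propose
\[
\eta_N:N\to (N\otimes_BA)^{\operatorname{co}H},\qquad \eta_N(n)=\alpha_N^{-1}(n)\otimes_B1_A .
\]
For $M\in\mathcal M_A^H$ I propose
\[
\varepsilon_M:M^{\operatorname{co}H}\otimes_BA\to M,\qquad \varepsilon_M(m\otimes_Ba)=\alpha_M^{-1}(m)\cdot\beta_A^{-1}(a).
\]
The exponents are tentative and will be fixed in the first step by requiring $\varepsilon_M$ to respect the balancing relation \eqref{eq:balanced-relation} and $\eta_N$ to land in coinvariants.

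\emph{Well-definedness of $\eta_N$.} By \eqref{eq:F-coaction} and \eqref{eq:comodule-algebra-unit},
\[
\rho(\alpha_N^{-1}(n)\otimes_B1_A)=\alpha_N^{-2}(n)\otimes_B1_A\otimes1_H,
\]
which equals $\alpha^{-1}_{F(N)}(\eta_N(n))\otimes1_H$. So $\eta_N(n)$ is coinvariant. Commutation of $\eta_N$ with $\alpha,\beta$ uses \eqref{eq:bihom-unit-stable}. For $B$-linearity, note that $\eta_N(n\cdot b)=\alpha_N^{-1}(n)\cdot\alpha_B^{-1}(b)\otimes_B1_A$. By \eqref{eq:balanced-relation-equivalent} this becomes $n\otimes_B\alpha_B^{-1}(b)1_A=n\otimes_Bb$. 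This agrees with $\eta_N(n)\cdot b=n\otimes_B 1_A\beta_A^{-1}(b)=n\otimes_Bb$, using \eqref{eq:F-action} and \eqref{eq:bihom-unit}.

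\emph{Well-definedness of $\varepsilon_M$.} I check the generator of $R$ through the universal property in Remark~\ref{rem:balanced-universal-property}. The balancing reduces to the module associativity \eqref{eq:right-module-assoc}, $\alpha_M(m)\cdot(ba)=(m\cdot b)\cdot\beta_A(a)$, after shifting by $\alpha_M^{-1}$ and $\beta_A^{-1}$. $A$-linearity again uses \eqref{eq:right-module-assoc}. Colinearity uses the compatibility \eqref{eq:bihom-doi-hopf-compatibility}, the identity $\rho_M(m)=\alpha_M^{-1}(m)\otimes1_H$ for coinvariant $m$, and $1_Hh=\beta_H(h)$. It should produce exactly the factor $\beta_H(a_{[1]})$ in \eqref{eq:F-coaction}, once $\beta_A^{-1}$ is pulled through $\rho_A$ by \eqref{eq:right-comodule-structure}.

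\emph{Naturality and triangle identities.} Naturality of both maps is immediate from the formulas. It remains to verify
\[
G(\varepsilon_M)\circ\eta_{G(M)}=\mathrm{id},\qquad \varepsilon_{F(N)}\circ F(\eta_N)=\mathrm{id}.
\]
The first is a one-line computation: $m\mapsto\alpha_M^{-1}(m)\otimes_B1_A\mapsto\alpha_M^{-2}(m)\cdot1_A=\alpha_M^{-1}(m)$. This shows my first exponent guesses are off by one power of $\alpha$, and I will adjust $\varepsilon_M$ to $m\otimes_Ba\mapsto m\cdot\beta_A^{-1}(a)$ (rechecking the balancing) or $\eta_N$ accordingly. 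For the second, I send $n\otimes_Ba\mapsto(\alpha_N^{-1}(n)\otimes_B1_A)\otimes_Ba$ and then act by $a$. The result is $n\otimes_B1_A\beta_A^{-1}(a)$ up to twists, and it equals $n\otimes_Ba$ by \eqref{eq:bihom-unit}.

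\emph{Main obstacle.} I expect the main difficulty to be choosing the exact powers of $\alpha$ and $\beta$ in $\eta$ and $\varepsilon$ so that four things hold at once: the balancing, $B$-linearity, colinearity, and both triangle identities. I will fix $\varepsilon$ first, by making it descend through $R$ and be colinear. I will then solve the triangle identities for $\eta$.

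As a fallback, I can instead exhibit a natural bijection
\[
\operatorname{Hom}_{\mathcal M_A^H}(N\otimes_BA,M)\cong\operatorname{Hom}_{\mathcal M_B}(N,M^{\operatorname{co}H}).
\]
The forward map is $f\mapsto f\circ\eta_N$, and the inverse is $g\mapsto\varepsilon_M\circ F(g)$. Here one checks that $f(\alpha^{-1}_N(n)\otimes_B1_A)$ is coinvariant and that every morphism out of $F(N)$ is determined by its values on elements $n\otimes_B1_A$, since $n\otimes_Ba$ is a twisted image of $(n'\otimes_B1_A)\cdot a'$.
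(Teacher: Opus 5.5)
Your strategy and your unit $\eta_N(n)=\alpha_N^{-1}(n)\otimes_B1_A$ coincide with the paper's, and so do its coinvariance, twist and $B$-linearity checks. The gap is the counit: none of the formulas you propose works. For $\varepsilon_M(m\otimes_Ba)=\alpha_M^{-1}(m)\cdot\beta_A^{-1}(a)$, the two sides of a generator of $R$ give
\[
\varepsilon_M\bigl((m\cdot b)\otimes_B\beta_A(a)\bigr)=\bigl(\alpha_M^{-1}(m)\cdot\alpha_A^{-1}(b)\bigr)\cdot a,
\qquad
\varepsilon_M\bigl(\alpha_M(m)\otimes_Bba\bigr)=\bigl(\alpha_M^{-1}(m)\cdot\beta_A^{-1}(b)\bigr)\cdot a .
\]
These differ as soon as $\alpha_A\neq\beta_A$ on $B$; for $M=A$ and $m=a=1_A$ they are $\beta_A(b)$ and $\alpha_A(b)$. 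Colinearity also fails, contrary to what you assert. Pulling $\beta_A^{-1}$ through $\rho_A$ produces $\beta_H^{-1}(a_{[1]})$, and $1_H\beta_H^{-1}(a_{[1]})=a_{[1]}$. So the second leg of $\rho_M(\varepsilon_M(m\otimes_Ba))$ is $a_{[1]}$, not the $\beta_H(a_{[1]})$ required by \eqref{eq:F-coaction}. Your repair $m\otimes_Ba\mapsto m\cdot\beta_A^{-1}(a)$ keeps both defects: balancing now compares $(m\cdot b)\cdot a$ with $(m\cdot\beta_A^{-1}(b))\cdot a$. It also breaks the second triangle identity, because $(\alpha_N^{-1}(n)\otimes_B1_A)\cdot\beta_A^{-1}(a)=n\otimes_B\beta_A^{-1}(a)$.

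What is missing is the observation that the counit needs no twist at all. The balancing relation $(n\cdot b)\otimes_B\beta_A(a)=\alpha_N(n)\otimes_Bba$ is modelled exactly on the module associativity $(m\cdot b)\cdot\beta_A(a)=\alpha_M(m)\cdot(ba)$, so the paper takes $\delta_M(m\otimes_Ba)=m\cdot a$. With this map the remaining checks are short:
\begin{itemize}
\item colinearity is $\rho_M(m\cdot a)=\alpha_M^{-1}(m)\cdot a_{[0]}\otimes1_Ha_{[1]}=\alpha_M^{-1}(m)\cdot a_{[0]}\otimes\beta_H(a_{[1]})$;
\item $A$-linearity is $\alpha_M(m)\cdot(a\beta_A^{-1}(c))=(m\cdot a)\cdot c$;
\item the triangle identities are $\alpha_M^{-1}(m)\cdot1_A=m$ and $(\alpha_N^{-1}(n)\otimes_B1_A)\cdot a=n\otimes_B1_A\beta_A^{-1}(a)=n\otimes_Ba$.
\end{itemize}
Your own sketch of the second triangle identity (``act by $a$'') is in fact this computation. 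There is no room for twists. Generically, a map $m\otimes a\mapsto\alpha_M^p\beta_M^q(m)\cdot\alpha_A^r\beta_A^s(a)$ is balanced only when $(r,s)=(p,q)$, that is, when it equals $\alpha_M^p\beta_M^q\circ\delta_M$. The first triangle identity with your $\eta$ then forces $p=q=0$.

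Your stated plan of fixing $\varepsilon$ by balancing and colinearity would have led to $\delta_M$. As written, however, the proposal asserts verifications that are false for each counit it offers. The fallback hom-set bijection inherits the same error, since its inverse is $g\mapsto\varepsilon_M\circ F(g)$.
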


\begin{proof}
We define the unit and counit as follows. For $N\in\mathcal{M}_B$, let
\[
\eta_N:N\longrightarrow (N\otimes_BA)^{\operatorname{co}H},
\qquad
\eta_N(n)=\alpha_N^{-1}(n)\otimes_B1_A.
\]
For $M\in\mathcal{M}_A^H$, let
\[
\delta_M:M^{\operatorname{co}H}\otimes_BA\longrightarrow M,
\qquad
\delta_M(m\otimes_Ba)=m\cdot a.
\]

First, $\eta_N$ is well-defined. Indeed,
\begin{align*}
\rho_{F(N)}(\alpha_N^{-1}(n)\otimes_B1_A)
&=
\alpha_N^{-2}(n)\otimes_B1_A\otimes1_H                     \\
&=
\alpha_{F(N)}^{-1}\bigl(\alpha_N^{-1}(n)\otimes_B1_A\bigr)
\otimes1_H.
\end{align*}
Hence $\eta_N(n)\in (N\otimes_BA)^{\operatorname{co}H}$.

It commutes with the structure maps, since
\[
\eta_N(\alpha_N(n))=n\otimes_B1_A
=\alpha_{F(N)}(\eta_N(n))
\]
and
\[
\eta_N(\beta_N(n))
=\alpha_N^{-1}\beta_N(n)\otimes_B1_A
=\beta_{F(N)}(\eta_N(n)).
\]

Moreover, $\eta_N$ is right $B$-linear. For $b\in B$,
\begin{align*}
\eta_N(n\cdot b)
&=
\alpha_N^{-1}(n\cdot b)\otimes_B1_A                         \\
&=
\bigl(\alpha_N^{-1}(n)\cdot\alpha_B^{-1}(b)\bigr)\otimes_B1_A\\
&=
n\otimes_B\alpha_B^{-1}(b)1_A                                \\
&=
n\otimes_Bb.
\end{align*}
On the other hand,
\[
\eta_N(n)\cdot b
=
(\alpha_N^{-1}(n)\otimes_B1_A)\cdot b
=
n\otimes_B1_A\beta_A^{-1}(b)
=
n\otimes_Bb.
\]
Thus $\eta_N(n\cdot b)=\eta_N(n)\cdot b$.

Next, $\delta_M$ is well-defined with respect to the balancing relation.
For $m\in M^{\operatorname{co}H}$, $b\in B$ and $a\in A$,
\begin{align*}
\delta_M\bigl((m\cdot b)\otimes_B\beta_A(a)\bigr)
&=
(m\cdot b)\cdot\beta_A(a)                                    \\
&=
\alpha_M(m)\cdot(ba)                                         \\
&=
\delta_M\bigl(\alpha_M(m)\otimes_Bba\bigr).
\end{align*}
It commutes with the structure maps because
\begin{align*}
\delta_M(\alpha_M(m)\otimes_B\alpha_A(a))
&=\alpha_M(m\cdot a),\\
\delta_M(\beta_M(m)\otimes_B\beta_A(a))
&=\beta_M(m\cdot a).
\end{align*}
The map $\delta_M$ is right $A$-linear because
\begin{align*}
\delta_M\bigl((m\otimes_Ba)\cdot c\bigr)
&=
\delta_M\bigl(\alpha_M(m)\otimes_Ba\beta_A^{-1}(c)\bigr)      \\
&=
\alpha_M(m)\cdot(a\beta_A^{-1}(c))                            \\
&=
(m\cdot a)\cdot c                                             \\
&=
\delta_M(m\otimes_Ba)\cdot c.
\end{align*}
It is also right $H$-colinear. Since $m\in M^{\operatorname{co}H}$, $\rho_M(m)=\alpha_M^{-1}(m)\otimes1_H$.
Therefore
\begin{align*}
\rho_M(\delta_M(m\otimes_Ba))
&=
\rho_M(m\cdot a)                                              \\
&=
m_{[0]}\cdot a_{[0]}\otimes m_{[1]}a_{[1]}                    \\
&=
\alpha_M^{-1}(m)\cdot a_{[0]}\otimes1_Ha_{[1]}                \\
&=
\alpha_M^{-1}(m)\cdot a_{[0]}\otimes\beta_H(a_{[1]})          \\
&=
(\delta_M\otimes\operatorname{id}_H)
\rho_{F(G(M))}(m\otimes_Ba).
\end{align*}
Thus $\delta_M$ is a morphism in $\mathcal{M}_A^H$.

If $u:N\to N'$ is a morphism in $\mathcal M_B$ and $v:M\to M'$ is a
morphism in $\mathcal M_A^H$, their defining formulas give
$G(F(u))\eta_N=\eta_{N'}u$ and
$v\delta_M=\delta_{M'}F(G(v))$.  Thus $\eta$ and $\delta$ are natural.
It remains to verify the triangular identities.
For $n\in N$ and $a\in A$,
\begin{align*}
\delta_{F(N)}\bigl(F(\eta_N)(n\otimes_Ba)\bigr)
&=
\delta_{F(N)}\bigl((\alpha_N^{-1}(n)\otimes_B1_A)\otimes_Ba\bigr)\\
&=
(\alpha_N^{-1}(n)\otimes_B1_A)\cdot a                         \\
&=
n\otimes_B1_A\beta_A^{-1}(a)                                  \\
&=
n\otimes_Ba.
\end{align*}
Here we used the BiHom-unit identity $1_Ax=\beta_A(x)$.
For $m\in M^{\operatorname{co}H}$,
\begin{align*}
G(\delta_M)\bigl(\eta_{G(M)}(m)\bigr)
&=
G(\delta_M)(\alpha_M^{-1}(m)\otimes_B1_A)                     \\
&=
\alpha_M^{-1}(m)\cdot1_A                                      \\
&=
m.
\end{align*}
Hence $F\dashv G$.
\end{proof}

\subsection{Torsion theory for relative BiHom-Hopf modules}

For the remainder of the paper, assume additionally that $H$ has a fixed
Haar integral $\lambda\in H^*$. We use it to construct a torsion theory
on $\mathcal{M}_A^H$.

Define
\[
x*y=\alpha_H^{-1}(x)\beta_H^{-1}(y),
\qquad
\Delta^\sharp(h)=\alpha_H(h_1)\otimes\beta_H(h_2).
\]
When using Sweedler notation for the untwisted coproduct, we write
$\Delta^\sharp(h)=h_1^\sharp\otimes h_2^\sharp$; hence
$h_1^\sharp=\alpha_H(h_1)$ and $h_2^\sharp=\beta_H(h_2)$ inside
Sweedler-suppressed expressions.
This is the usual untwisting of a BiHom structure with bijective
structure maps; compare \cite{GrazianiMakhloufMeniniPanaite2015}.  We
include the verification because the transported integral identity used
below depends on the precise conventions.

\begin{lemma}\label{lem:untwist-bihom-hopf}
Let $(H,\mu_H,\Delta_H,\alpha_H,\beta_H,1_H,\varepsilon_H,S_H)$ be a monoidal BiHom-Hopf algebra with bijective structure maps. 
Then $H^\sharp=(H,*,\Delta^\sharp,1_H,\varepsilon_H,S_H)$ is an ordinary Hopf algebra.
\end{lemma}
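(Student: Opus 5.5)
The plan is to verify the Hopf algebra axioms for $H^\sharp$ one at a time, each time reducing the untwisted identity to the corresponding BiHom identity in Section~2. The only tools are the fact that $\alpha_H,\beta_H$ are commuting automorphisms, that they are multiplicative and commute with $\Delta_H$ by \eqref{eq:bihom-alg-mult} and \eqref{eq:bihom-coalg-comult}, and that they fix $1_H$ and $\varepsilon_H$. Throughout we use that $\alpha_H^{\pm1}$ and $\beta_H^{\pm1}$ can be pushed through products and coproducts factorwise.

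\emph{Algebra part.} Expanding both bracketings gives
\[
(x*y)*z=\bigl(\alpha_H^{-2}(x)\,\alpha_H^{-1}\beta_H^{-1}(y)\bigr)\beta_H^{-1}(z),
\qquad
x*(y*z)=\alpha_H^{-1}(x)\bigl(\alpha_H^{-1}\beta_H^{-1}(y)\,\beta_H^{-2}(z)\bigr).
\]
These agree by \eqref{eq:bihom-assoc} applied with $a=\alpha_H^{-2}(x)$, $b=\alpha_H^{-1}\beta_H^{-1}(y)$ and $c=\beta_H^{-2}(z)$. For the unit, \eqref{eq:bihom-unit} together with \eqref{eq:bihom-unit-stable} gives $1_H*y=1_H\beta_H^{-1}(y)=y$ and $x*1_H=\alpha_H^{-1}(x)1_H=x$.

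\emph{Coalgebra part.} Expanding the two iterated coproducts gives
\[
(\Delta^\sharp\otimes\mathrm{id})\Delta^\sharp(h)=\alpha_H^2(h_{11})\otimes\alpha_H\beta_H(h_{12})\otimes\beta_H(h_2),
\]
\[
(\mathrm{id}\otimes\Delta^\sharp)\Delta^\sharp(h)=\alpha_H(h_1)\otimes\alpha_H\beta_H(h_{21})\otimes\beta_H^2(h_{22}).
\]
These coincide after applying $\alpha_H^2\otimes\alpha_H\beta_H\otimes\beta_H^2$ to the Sweedler form of \eqref{eq:bihom-coassoc}. For the counit, \eqref{eq:bihom-counit-inv} and \eqref{eq:bihom-counit} give $\varepsilon_H(\alpha_H(h_1))\beta_H(h_2)=\beta_H\beta_H^{-1}(h)=h$, and symmetrically on the other side.

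\emph{Bialgebra compatibility and antipode.} Using \eqref{eq:bihom-bialg-delta} and the fact that the twists are multiplicative and comultiplicative, both $\Delta^\sharp(x*y)$ and $\Delta^\sharp(x)*\Delta^\sharp(y)$ reduce to
\[
x_1\,\alpha_H\beta_H^{-1}(y_1)\otimes\alpha_H^{-1}\beta_H(x_2)\,y_2 .
\]
Multiplicativity of $\varepsilon_H$ and the equation $\Delta^\sharp(1_H)=1_H\otimes1_H$ follow directly from \eqref{eq:bihom-bialg-eps} and the invariance of $1_H$. For the antipode, \eqref{eq:antipode-commutes} gives
\[
S_H(h^\sharp_1)*h^\sharp_2=\alpha_H^{-1}S_H\alpha_H(h_1)\,\beta_H^{-1}\beta_H(h_2)=S_H(h_1)h_2,
\]
which equals $\varepsilon_H(h)1_H$ by \eqref{eq:antipode}, and the other side is analogous.

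I expect the main obstacle to be bookkeeping rather than any conceptual step. The delicate points are coassociativity, where the exact powers of $\alpha_H$ and $\beta_H$ must match the asymmetric form of \eqref{eq:bihom-coassoc}, and the mixed factors $\alpha_H\beta_H^{-1}$ and $\alpha_H^{-1}\beta_H$ in the multiplicativity check. I will handle both by writing the exponents explicitly as above before invoking the BiHom identity.
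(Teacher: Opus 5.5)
Your proposal is correct and follows the paper's proof essentially line for line. It uses the same substitution $a=\alpha_H^{-2}(x)$, $b=\alpha_H^{-1}\beta_H^{-1}(y)$, $c=\beta_H^{-2}(z)$ for associativity, the same application of $\alpha_H^2\otimes\alpha_H\beta_H\otimes\beta_H^2$ to BiHom-coassociativity, the same common form $x_1\,\alpha_H\beta_H^{-1}(y_1)\otimes\alpha_H^{-1}\beta_H(x_2)\,y_2$ for multiplicativity of $\Delta^\sharp$, and the same reduction of the antipode identities to $S_H(h_1)h_2$ and $h_1S_H(h_2)$.
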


\begin{proof}
We first check associativity of $*$. For $x,y,z\in H$,
\[
(x*y)*z
=
\alpha_H^{-1}(x*y)\beta_H^{-1}(z)
=
(\alpha_H^{-2}(x)\alpha_H^{-1}\beta_H^{-1}(y))\beta_H^{-1}(z),
\]
while
\[
x*(y*z)
=
\alpha_H^{-1}(x)\beta_H^{-1}(y*z)
=
\alpha_H^{-1}(x)\bigl(\alpha_H^{-1}\beta_H^{-1}(y)\beta_H^{-2}(z)\bigr).
\]
Applying the BiHom-associativity of $H$ to $a=\alpha_H^{-2}(x)$, $b=\alpha_H^{-1}\beta_H^{-1}(y)$ and $c=\beta_H^{-2}(z)$, we obtain
\[
\alpha_H^{-1}(x)\bigl(\alpha_H^{-1}\beta_H^{-1}(y)\beta_H^{-2}(z)\bigr)
=
(\alpha_H^{-2}(x)\alpha_H^{-1}\beta_H^{-1}(y))\beta_H^{-1}(z).
\]
Hence $*$ is associative.

The unit is $1_H$, because $x*1_H=\alpha_H^{-1}(x)1_H=x$ and $1_H*x=1_H\beta_H^{-1}(x)=x$, by the BiHom-unit identities.

Next, we check coassociativity of $\Delta^\sharp$. We compute
\[
(\Delta^\sharp\otimes\operatorname{id})\Delta^\sharp(h)
=
\alpha_H^2(h_{11})\otimes\alpha_H\beta_H(h_{12})\otimes\beta_H(h_2),
\]
and
\[
(\operatorname{id}\otimes\Delta^\sharp)\Delta^\sharp(h)
=
\alpha_H(h_1)\otimes\alpha_H\beta_H(h_{21})\otimes\beta_H^2(h_{22}).
\]
These are equal because the BiHom-coassociativity identity 
$(\Delta_H\otimes\beta_H^{-1})\Delta_H = (\alpha_H^{-1}\otimes\Delta_H)\Delta_H$
becomes exactly the above equality after applying $\alpha_H^2\otimes\alpha_H\beta_H\otimes\beta_H^2$.

The counit property follows from
\[
(\varepsilon_H\otimes\operatorname{id})\Delta^\sharp(h)
=
\varepsilon_H(\alpha_H(h_1))\beta_H(h_2)
=
\varepsilon_H(h_1)\beta_H(h_2)
=
h
\]
and
\[
(\operatorname{id}\otimes\varepsilon_H)\Delta^\sharp(h)
=
\alpha_H(h_1)\varepsilon_H(\beta_H(h_2))
=
\alpha_H(h_1)\varepsilon_H(h_2)
=
h.
\]

We now verify that $\Delta^\sharp$ and $\varepsilon_H$ are algebra maps.
Using multiplicativity of $\Delta_H$, we have
\begin{align*}
\Delta^\sharp(x*y)
&=
\Delta^\sharp(\alpha_H^{-1}(x)\beta_H^{-1}(y)) \\
&=
\alpha_H(\alpha_H^{-1}(x_1)\beta_H^{-1}(y_1))
\otimes
\beta_H(\alpha_H^{-1}(x_2)\beta_H^{-1}(y_2)) \\
&=
x_1\,\alpha_H\beta_H^{-1}(y_1)
\otimes
\beta_H\alpha_H^{-1}(x_2)\,y_2.
\end{align*}
On the other hand,
\begin{align*}
\Delta^\sharp(x)\Delta^\sharp(y)
&=
(\alpha_H(x_1)*\alpha_H(y_1))
\otimes
(\beta_H(x_2)*\beta_H(y_2)) \\
&=
x_1\,\beta_H^{-1}\alpha_H(y_1)
\otimes
\alpha_H^{-1}\beta_H(x_2)\,y_2 \\
&=
x_1\,\alpha_H\beta_H^{-1}(y_1)
\otimes
\beta_H\alpha_H^{-1}(x_2)\,y_2.
\end{align*}
Thus $\Delta^\sharp(x*y)=\Delta^\sharp(x)\Delta^\sharp(y)$. Also
\[
\Delta^\sharp(1_H)=1_H\otimes1_H,
\qquad
\varepsilon_H(x*y)=\varepsilon_H(x)\varepsilon_H(y),
\qquad
\varepsilon_H(1_H)=1.
\]

Finally, for the antipode, since $\Delta^\sharp(h)=\alpha_H(h_1)\otimes\beta_H(h_2)$, we get
\[
S_H(h_1^\sharp)*h_2^\sharp = S_H(\alpha_H(h_1))*\beta_H(h_2) = S_H(h_1)h_2 = \varepsilon_H(h)1_H,
\]
and similarly
\[
h_1^\sharp*S_H(h_2^\sharp)
=
\alpha_H(h_1)*S_H(\beta_H(h_2))
=
h_1S_H(h_2)
=
\varepsilon_H(h)1_H.
\]
Therefore $H^\sharp$ is an ordinary Hopf algebra.
\end{proof}

\begin{lemma}\label{lem:bihom-haar-identities}
Let $(H,\mu_H,\Delta_H,\alpha_H,\beta_H,1_H,\varepsilon_H,S_H)$ be a monoidal BiHom-Hopf algebra, and let
$\lambda\in H^*$ be a Haar integral. Then, for all $h,g\in H$,
\begin{equation}\label{eq:transported-haar}
\beta_H(h_2)\lambda(h_1g)
=
\lambda\bigl(\alpha_H^{-1}(h)\alpha_H(g_1)\bigr)\,
\beta_H^2(S_H(g_2)).
\end{equation}
Here, juxtaposition denotes the BiHom multiplication of $H$.
\end{lemma}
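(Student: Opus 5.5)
The plan is to reduce \eqref{eq:transported-haar} to the classical integral identity in the untwisted Hopf algebra $H^\sharp$ of Lemma~\ref{lem:untwist-bihom-hopf}, and then translate back using $x*y=\alpha_H^{-1}(x)\beta_H^{-1}(y)$ and $\Delta^\sharp(h)=\alpha_H(h_1)\otimes\beta_H(h_2)$.

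\emph{Step 1: $\lambda$ is an integral for $H^\sharp$.} Using the invariances $\lambda\circ\alpha_H=\lambda=\lambda\circ\beta_H$ and the fact that $\alpha_H$ is linear, we get
\[
h^\sharp_1\lambda(h^\sharp_2)=\alpha_H(h_1)\lambda(\beta_H(h_2))=\alpha_H\bigl(h_1\lambda(h_2)\bigr)=\lambda(h)\alpha_H(1_H)=\lambda(h)1_H .
\]
Symmetrically, $\lambda(h^\sharp_1)h^\sharp_2=\beta_H(\lambda(h_1)h_2)=\lambda(h)1_H$. Hence $\lambda$ is a (normalized, two-sided) integral of the ordinary Hopf algebra $H^\sharp$. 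We will only need the right-integral property.

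\emph{Step 2: the classical identity in $H^\sharp$.} For a right integral $\lambda$ of an ordinary Hopf algebra, I will prove
\[
x_2\,\lambda(x_1 g)=\lambda(x g_1)\,S(g_2)
\]
by evaluating $\lambda\bigl((xg_1)_1\bigr)(xg_1)_2S(g_2)$ in two ways.
\begin{itemize}
\item By the right-integral property, this expression equals $\lambda(xg_1)S(g_2)$.
\item Expanding $\Delta(xg_1)=x_1g_1\otimes x_2g_2$ and using coassociativity, it equals $\lambda(x_1g_1)x_2g_2S(g_3)$. The antipode axiom collapses $g_2S(g_3)$, and the counit property then gives $\lambda(x_1g)x_2$.
\end{itemize}
I apply this inside $H^\sharp$, with product $*$, coproduct $\Delta^\sharp$ and antipode $S_H$; all these axioms hold there by Lemma~\ref{lem:untwist-bihom-hopf}. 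This gives
\[
h^\sharp_2\,\lambda(h^\sharp_1*g)=\lambda(h*g^\sharp_1)\,S_H(g^\sharp_2).
\]

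\emph{Step 3: translate back.} On the left-hand side, $h^\sharp_1*g=\alpha_H^{-1}\alpha_H(h_1)\beta_H^{-1}(g)=h_1\beta_H^{-1}(g)$, so the left-hand side is $\beta_H(h_2)\lambda(h_1\beta_H^{-1}(g))$. On the right-hand side, $h*g^\sharp_1=\alpha_H^{-1}(h)\,\alpha_H\beta_H^{-1}(g_1)$ and $S_H(g_2^\sharp)=S_H\beta_H(g_2)$. Now replace $g$ by $\beta_H(g)$, using $\Delta_H\circ\beta_H=(\beta_H\otimes\beta_H)\circ\Delta_H$ and $S_H\beta_H=\beta_H S_H$ from \eqref{eq:antipode-commutes}. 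The left-hand side becomes $\beta_H(h_2)\lambda(h_1g)$ and the right-hand side becomes $\lambda(\alpha_H^{-1}(h)\alpha_H(g_1))\,\beta_H^2(S_H(g_2))$, which is exactly \eqref{eq:transported-haar}.

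The main obstacle is the bookkeeping of twists in Step 3. In particular, the classical identity must be applied with the Sweedler legs of $\Delta^\sharp$ rather than $\Delta_H$, and each $*$-product must be unwound correctly into the BiHom product. The classical derivation itself is standard once Lemma~\ref{lem:untwist-bihom-hopf} is available.
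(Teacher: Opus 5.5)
Your proposal is correct and follows the paper's proof essentially step for step. You show that $\lambda$ is a two-sided integral on $H^\sharp$, derive $x_2\lambda(x_1y)=\lambda(xy_1)S(y_2)$ from the right-integral property applied to $xy_1$, and translate back via $*$ and $\Delta^\sharp$; the only cosmetic difference is that you apply the identity with general $g$ and then substitute $g\mapsto\beta_H(g)$, whereas the paper takes $y=\beta_H(g)$ directly.
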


\begin{proof}
By Lemma~\ref{lem:untwist-bihom-hopf}, the same underlying vector space
carries an ordinary Hopf algebra structure. We first observe that $\lambda$ is a normalized two-sided integral on
$H^\sharp$. Indeed,
\[
h^\sharp_1\lambda(h^\sharp_2)
=
\alpha_H(h_1)\lambda(\beta_H(h_2))
=
\alpha_H(h_1)\lambda(h_2)
=
\alpha_H\bigl(h_1\lambda(h_2)\bigr)
=
\lambda(h)1_H,
\]
where we used $\lambda\circ\beta_H=\lambda$ and the left integral
identity in $H$. Similarly,
\[
\lambda(h^\sharp_1)h^\sharp_2
=
\lambda(\alpha_H(h_1))\beta_H(h_2)
=
\lambda(h_1)\beta_H(h_2)
=
\beta_H\bigl(\lambda(h_1)h_2\bigr)
=
\lambda(h)1_H.
\]
Hence $\lambda$ is a Haar integral on the ordinary Hopf algebra
$H^\sharp$.

We use the following identity for an ordinary Hopf algebra with a
two-sided integral $\lambda$:
\begin{equation}\label{eq:ordinary-haar-used}
x^\sharp_2\,\lambda(x^\sharp_1*y)
=
\lambda(x*y^\sharp_1)\,S_H(y^\sharp_2).
\end{equation}
Indeed, suppressing the superscript $\sharp$ for this three-line
calculation and using the right-integral identity for $xy_1$, we have
\begin{align*}
x_2\lambda(x_1y)
&=x_2\lambda(x_1y_1)\varepsilon(y_2)\\
&=\lambda(x_1y_1)x_2y_2S_H(y_3)\\
&=\lambda(xy_1)S_H(y_2).
\end{align*}
This proves \eqref{eq:ordinary-haar-used} and fixes the left/right
integral convention used here.

Now apply \eqref{eq:ordinary-haar-used} in $H^\sharp$ with $x=h$ and $y=\beta_H(g)$.
Since $\Delta^\sharp(h)=\alpha_H(h_1)\otimes\beta_H(h_2)$, the left-hand side becomes 
$\beta_H(h_2)\lambda\bigl(\alpha_H(h_1)*\beta_H(g)\bigr)$.

By the definition of the untwisted product,
\[
\alpha_H(h_1)*\beta_H(g)
=
\alpha_H^{-1}(\alpha_H(h_1))\,
\beta_H^{-1}(\beta_H(g))
=
h_1g.
\]
Thus the left-hand side is $\beta_H(h_2)\lambda(h_1g)$.

On the right-hand side, since $\Delta_H(\beta_H(g))=\beta_H(g_1)\otimes\beta_H(g_2)$, 
we have
\[
\Delta^\sharp(\beta_H(g))
=
\alpha_H\beta_H(g_1)\otimes\beta_H^2(g_2).
\]
Therefore
\[
h*(\alpha_H\beta_H(g_1))
=
\alpha_H^{-1}(h)\,
\beta_H^{-1}(\alpha_H\beta_H(g_1))
=
\alpha_H^{-1}(h)\alpha_H(g_1),
\]
and
\[
S_H(\beta_H^2(g_2))
=
\beta_H^2(S_H(g_2)).
\]
Substituting these expressions into \eqref{eq:ordinary-haar-used}, we
obtain
\[
\beta_H(h_2)\lambda(h_1g)
=
\lambda\bigl(\alpha_H^{-1}(h)\alpha_H(g_1)\bigr)
\beta_H^2(S_H(g_2)).
\]
This proves \eqref{eq:transported-haar}.
\end{proof}

\begin{definition}\label{def:pi-and-kappa}
Let $(M,\alpha_M,\beta_M,\cdot,\rho_M)\in\mathcal{M}_A^H$. Define a linear map $\pi_M:M\longrightarrow M$ by
\begin{equation}\label{eq:pi-definition}
\pi_M(m)=\lambda(m_{[1]})\,\alpha_M(m_{[0]}),
\qquad m\in M.
\end{equation}
We also define
\begin{equation}\label{eq:kappa-definition}
\kappa(M) = \{\,m\in M\mid \pi_M(m\cdot a)=0,\ \forall a\in A\,\}.
\end{equation}
\end{definition}

\begin{lemma}\label{lem:pi-basic-properties}
Let $M\in\mathcal{M}_A^H$. Then:
\begin{enumerate}
\item $\operatorname{Im}\pi_M\subseteq M^{\operatorname{co}H}$;
\item $\pi_M|_{M^{\operatorname{co}H}}=\operatorname{id}_{M^{\operatorname{co}H}}$;
\item $\pi_M^2=\pi_M$ and $\operatorname{Im}\pi_M=M^{\operatorname{co}H}$;
\item $\pi_M\circ\alpha_M=\alpha_M\circ\pi_M$ and
      $\pi_M\circ\beta_M=\beta_M\circ\pi_M$;
\item $\pi_M$ is right $B$-linear;
\item for every morphism $f:M\to N$ in $\mathcal{M}_A^H$, one has $f\circ\pi_M=\pi_N\circ f$.
\end{enumerate}
\end{lemma}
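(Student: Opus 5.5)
The plan is to verify the six items directly from the definition $\pi_M(m)=\lambda(m_{[1]})\alpha_M(m_{[0]})$. We use only the BiHom-comodule axioms \eqref{eq:right-comodule-structure}--\eqref{eq:right-comodule-counit}, the relative compatibility \eqref{eq:bihom-doi-hopf-compatibility}, and the integral properties \eqref{eq:integral-invariant}, \eqref{eq:right-integral-invariant}. We do \emph{not} need the transported identity of Lemma~\ref{lem:bihom-haar-identities} here; that is reserved for the subcomodule property of $\kappa(M)$.

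For (1), we compute
\[
\rho_M(\pi_M(m))=\lambda(m_{[1]})\,\alpha_M(m_{[0][0]})\otimes\alpha_H(m_{[0][1]}),
\]
using \eqref{eq:right-comodule-structure}. We then rewrite the iterated coaction by BiHom-coassociativity \eqref{eq:right-comodule-coassoc}:
\[
m_{[0][0]}\otimes m_{[0][1]}\otimes m_{[1]}=\alpha_M^{-1}(m_{[0]})\otimes m_{[1]1}\otimes\beta_H(m_{[1]2}).
\]
Substituting this and using $\lambda\circ\beta_H=\lambda$ gives
\[
\rho_M(\pi_M(m))=m_{[0]}\otimes\alpha_H\bigl(m_{[1]1}\lambda(m_{[1]2})\bigr).
\]
The left integral identity together with $\alpha_H(1_H)=1_H$ turns this into $\lambda(m_{[1]})m_{[0]}\otimes1_H$, which is exactly $\alpha_M^{-1}(\pi_M(m))\otimes1_H$. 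I expect this index bookkeeping, namely getting the placement of $\alpha^{\pm1}$ and $\beta^{\pm1}$ right in the coassociativity rewrite, to be the only real obstacle. If the left integral identity does not fit after the rewrite, the fallback is to try the right integral identity, since $\lambda$ is two-sided.

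For (2), if $m\in M^{\operatorname{co}H}$ then $\pi_M(m)=\lambda(1_H)\alpha_M(\alpha_M^{-1}(m))=m$ by normalization. Item (3) follows formally from (1) and (2).

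For (4), we use $\rho_M\alpha_M=(\alpha_M\otimes\alpha_H)\rho_M$ and $\lambda\circ\alpha_H=\lambda$, and likewise for $\beta$, together with the fact that $\alpha_M$ and $\beta_M$ commute. For (5), take $b\in B$. By \eqref{eq:bihom-doi-hopf-compatibility} and $\rho_A(b)=\alpha_A^{-1}(b)\otimes1_H$,
\[
\rho_M(m\cdot b)=m_{[0]}\cdot\alpha_A^{-1}(b)\otimes1_Hm_{[1]}=m_{[0]}\cdot\alpha_A^{-1}(b)\otimes\beta_H(m_{[1]}).
\]
Hence $\pi_M(m\cdot b)=\lambda(m_{[1]})\,\alpha_M(m_{[0]})\cdot b=\pi_M(m)\cdot b$, using $\lambda\circ\beta_H=\lambda$ and \eqref{eq:right-module-structure}. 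For (6), colinearity of $f$ gives $f(m)_{[0]}\otimes f(m)_{[1]}=f(m_{[0]})\otimes m_{[1]}$, and $f\alpha_M=\alpha_Nf$ then yields $f\pi_M=\pi_Nf$.
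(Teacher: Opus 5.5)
Your argument is correct and follows the paper's proof essentially step for step, including the same coassociativity rewrite followed by the left-integral identity in (1). One small slip in (5): the compatibility condition produces $m_{[1]}1_H=\alpha_H(m_{[1]})$, not $1_Hm_{[1]}=\beta_H(m_{[1]})$, so you should invoke $\lambda\circ\alpha_H=\lambda$ there rather than $\lambda\circ\beta_H=\lambda$; the conclusion is unchanged.
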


\begin{proof}
We first show that $\pi_M(m)$ is coinvariant. Using the right
$H$-BiHom-comodule identity of $M$, we have
\begin{align*}
\rho_M(\pi_M(m))
&=
\lambda(m_{[1]})\,
\rho_M(\alpha_M(m_{[0]}))                                      \\
&=
\lambda(m_{[1]})\,
\alpha_M(m_{[0][0]})\otimes \alpha_H(m_{[0][1]})                \\
&=
m_{[0]}\otimes \alpha_H(m_{[1]1})\lambda(m_{[1]2})              \\
&=
\lambda(m_{[1]})m_{[0]}\otimes1_H                               \\
&=
\alpha_M^{-1}(\pi_M(m))\otimes1_H.
\end{align*}
Thus $\pi_M(m)\in M^{\operatorname{co}H}$.

If $n\in M^{\operatorname{co}H}$, then $\rho_M(n)=\alpha_M^{-1}(n)\otimes1_H$. Hence $\pi_M(n)=\lambda(1_H)\alpha_M(\alpha_M^{-1}(n))=n$. Therefore $\pi_M$ restricts to the identity on $M^{\operatorname{co}H}$.
It follows immediately that $\pi_M^2=\pi_M$ and
$\operatorname{Im}\pi_M=M^{\operatorname{co}H}$.

We also record that $\pi_M$ commutes with the two structure maps. Indeed,
using the colinearity of $\alpha_M$ and the invariance of $\lambda$, we get
\[
\pi_M(\alpha_M(m))
=
\lambda(\alpha_H(m_{[1]}))\,\alpha_M^2(m_{[0]})
=
\alpha_M(\pi_M(m)).
\]
Similarly,
\[
\pi_M(\beta_M(m))
=
\lambda(\beta_H(m_{[1]}))\,\alpha_M\beta_M(m_{[0]})
=
\beta_M(\pi_M(m)).
\]
Thus, $\pi_M\circ\alpha_M=\alpha_M\circ\pi_M$ and $\pi_M\circ\beta_M=\beta_M\circ\pi_M$.

Let $b\in B$. Since $\rho_A(b)=\alpha_A^{-1}(b)\otimes1_H$, we obtain
\begin{align*}
\pi_M(m\cdot b)
&=
\lambda(m_{[1]}1_H)\,
\alpha_M(m_{[0]}\cdot\alpha_A^{-1}(b))                         \\
&=
\lambda(\alpha_H(m_{[1]}))\,
\alpha_M(m_{[0]}\cdot\alpha_A^{-1}(b))                         \\
&=
\lambda(m_{[1]})\,\alpha_M(m_{[0]})\cdot b                      \\
&=
\pi_M(m)\cdot b.
\end{align*}
Thus $\pi_M$ is right $B$-linear.

Finally, if $f:M\to N$ is a morphism in $\mathcal{M}_A^H$, then $\rho_N(f(m))=f(m_{[0]})\otimes m_{[1]}$, and $f\alpha_M=\alpha_Nf$. Therefore
\[
\pi_N(f(m))
=
\lambda(m_{[1]})\alpha_N(f(m_{[0]}))
=
f(\lambda(m_{[1]})\alpha_M(m_{[0]}))
=
f(\pi_M(m)).
\]
\end{proof}

\begin{corollary}\label{cor:kappa-natural}
The assignment $M\mapsto\kappa(M)$ is a natural subfunctor of the
identity functor on $\mathcal M_A^H$.
\end{corollary}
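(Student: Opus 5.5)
To show that $\kappa$ is a natural subfunctor of the identity, two things are needed. First, $\kappa(M)$ must be a subobject of $M$ in $\mathcal M_A^H$. Second, every morphism $f:M\to N$ must send $\kappa(M)$ into $\kappa(N)$. Naturality is then automatic, because the induced maps are restrictions of $f$.

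The morphism part is quick. For $m\in\kappa(M)$ and $a\in A$, Lemma~\ref{lem:pi-basic-properties}(6) together with $A$-linearity of $f$ gives
\[
\pi_N(f(m)\cdot a)=\pi_N(f(m\cdot a))=f(\pi_M(m\cdot a))=0 .
\]
Hence $f(m)\in\kappa(N)$, and $f|_{\kappa(M)}$ is a morphism $\kappa(M)\to\kappa(N)$.

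For the subobject claim, $\kappa(M)$ is clearly a linear subspace. I will check its remaining structure in the following order.

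\emph{Stability under $\alpha_M^{\pm1}$ and $\beta_M^{\pm1}$.} Write $a=\alpha_A(a')$. Then
\[
\pi_M(\alpha_M(m)\cdot a)=\pi_M(\alpha_M(m\cdot a'))=\alpha_M\pi_M(m\cdot a')=0
\]
by Lemma~\ref{lem:pi-basic-properties}(4). The same argument works for $\beta_M$ and for the inverses.

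\emph{Closure under the $A$-action.} Let $m\in\kappa(M)$ and $c,a\in A$. By BiHom-associativity,
\[
(m\cdot c)\cdot a=\alpha_M(m)\cdot\bigl(c\,\beta_A^{-1}(a)\bigr).
\]
This lies in $\ker\pi_M$, since $\alpha_M(m)\in\kappa(M)$ by the previous step.

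\emph{Closure under the coaction.} This is the main step: I must show $\rho_M(\kappa(M))\subseteq\kappa(M)\otimes H$. Fix $m\in\kappa(M)$. Choose a basis of $H$ and write $\rho_M(m)=\sum_i m_i\otimes h_i$ with the $h_i$ linearly independent; it suffices to show each $m_i\in\kappa(M)$. To do this, I pair the second tensor factor with functionals of the form $\lambda(\,\cdot\,g)$ and use the transported Haar identity \eqref{eq:transported-haar} to move the $H$-leg onto the $A$-argument.

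Concretely, for $a\in A$ I expand $\pi_M(m_{[0]}\cdot a)\otimes m_{[1]}$ using the compatibility \eqref{eq:bihom-doi-hopf-compatibility} and the definition of $\pi_M$. This produces terms of the shape
\[
\lambda\bigl(m_{[0][1]}a_{[1]}\bigr)\,\alpha_M\bigl(m_{[0][0]}\cdot a_{[0]}\bigr)\otimes m_{[1]}.
\]
I then apply coassociativity \eqref{eq:right-comodule-coassoc} to rewrite the expression as $\lambda(h_1 g)\,\beta_H(h_2)$ with $h=m_{[1]}$ and $g=a_{[1]}$ (up to twists). Identity \eqref{eq:transported-haar} converts this into
\[
\lambda\bigl(\alpha_H^{-1}(m_{[1]})\,\alpha_H(a_{[1]1})\bigr)\,\beta_H^2\bigl(S_H(a_{[1]2})\bigr).
\]
Recombining with the compatibility condition should yield, up to applying structure maps,
\[
\sum_i \pi_M(m_i\cdot a)\otimes h_i=\sum_j \pi_M\bigl(m\cdot a'_j\bigr)\otimes k_j
\]
for suitable $a'_j\in A$ (twists of $a_{[0]}$) and $k_j\in H$ (built from $S_H(a_{[1]2})$). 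The right-hand side vanishes because $m\in\kappa(M)$. Linear independence of the $h_i$ then forces $\pi_M(m_i\cdot a)=0$ for all $a$, so each $m_i\in\kappa(M)$.

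The main obstacle I expect is the bookkeeping in this last computation: placing $\alpha$, $\beta$ and their inverses so that the left side of \eqref{eq:transported-haar} appears exactly. I would first carry out the untwisted calculation inside $H^\sharp$ using \eqref{eq:ordinary-haar-used}, and then transport it back. Once closure under the coaction holds, Lemma~\ref{lem:categories-abelian} shows that the relative compatibility and the BiHom identities are inherited, so $\kappa(M)$ is a subobject of $M$ in $\mathcal M_A^H$.
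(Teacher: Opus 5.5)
Your proposal is correct and follows the paper's approach. The morphism part is exactly the paper's proof of this corollary, and your subobject verification (structure maps, then the $A$-action, then the coaction via \eqref{eq:transported-haar} with $h=m_{[1]}$, $g=a_{[1]}$, and linear independence of the $h_i$) is the argument the paper gives separately in Lemma~\ref{lem:kappa-subobject}. The one step your sketch glosses is the ``recombining'': it also needs the coassociativity of $\rho_A$ and $\lambda\circ\alpha_H=\lambda$, and together these produce the paper's identity \eqref{eq:key-kappa-coaction-identity}, with $a'_j$ the terms of $\alpha_A^2(a_{[0]})$, $k_j$ those of $\beta_H(S_H(a_{[1]}))$, and a harmless extra $\alpha_M^{-1}$ on the left factor.
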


\begin{proof}
If $f:M\to N$, $x\in\kappa(M)$, and $a\in A$, then $A$-linearity of
$f$ and Lemma~\ref{lem:pi-basic-properties} give
\[
\pi_N(f(x)\cdot a)
=\pi_N(f(x\cdot a))
=f(\pi_M(x\cdot a))=0.
\]
Therefore $f(\kappa(M))\subseteq\kappa(N)$, proving the claim.
\end{proof}

\begin{lemma}\label{lem:kappa-subobject}
For every $M\in\mathcal{M}_A^H$, the subspace $\kappa(M)$ is a
relative BiHom-Hopf submodule of $M$. Hence $\kappa(M)\in\mathcal{M}_A^H$.
\end{lemma}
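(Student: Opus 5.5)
We must show that $\kappa(M)$ is closed under $\alpha_M^{\pm1}$, $\beta_M^{\pm1}$, under the right $A$-action, and under the coaction, i.e.\ $\rho_M(\kappa(M))\subseteq\kappa(M)\otimes H$. Once this is done, the module, comodule and compatibility identities are inherited from $M$, as in Lemma~\ref{lem:categories-abelian}.

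\emph{Step 1: structure maps.} Let $m\in\kappa(M)$ and $a\in A$. Then
\[
\pi_M(\alpha_M(m)\cdot a)=\pi_M\bigl(\alpha_M(m\cdot\alpha_A^{-1}(a))\bigr)=\alpha_M\,\pi_M(m\cdot\alpha_A^{-1}(a))=0,
\]
using Lemma~\ref{lem:pi-basic-properties}(4). The same argument works for $\alpha_M^{-1}$ and $\beta_M^{\pm1}$.

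\emph{Step 2: action.} For $m\in\kappa(M)$ and $c,a\in A$, write $\alpha_M^{-1}(m)\in\kappa(M)$, which holds by Step 1, and use (\ref{eq:right-module-assoc}) in the form
\[
(m\cdot c)\cdot a=\bigl(\alpha_M(\alpha_M^{-1}(m))\bigr)\cdot\ldots
\]
More directly,
\[
(m\cdot c)\cdot\beta_A(a')=\alpha_M(m)\cdot(ca'), \qquad a'=\beta_A^{-1}(a),
\]
so $\pi_M((m\cdot c)\cdot a)=\pi_M(\alpha_M(m)\cdot(ca'))=0$ because $\alpha_M(m)\in\kappa(M)$.

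\emph{Step 3: coaction (main obstacle).} Choose a basis $\{h^i\}$ of $H$ and write $\rho_M(m)=\sum_i m^i\otimes h^i$. It suffices to show that for every $f\in H^*$ the element $\sum_i f(h^i)m^i$ lies in $\kappa(M)$. Equivalently, for every $a\in A$,
\[
\pi_M(m_{[0]}\cdot a)\otimes m_{[1]}=0 \quad\text{in } M\otimes H.
\]
The plan is to express this element through values $\pi_M(m\cdot a')$ with $a'\in A$, using the transported Haar identity, Lemma~\ref{lem:bihom-haar-identities}.

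First expand
\[
\pi_M(m_{[0]}\cdot a)=\lambda(m_{[0][1]}a_{[1]})\,\alpha_M(m_{[0][0]}\cdot a_{[0]}).
\]
Then use the comodule coassociativity (\ref{eq:right-comodule-coassoc}) to rewrite $m_{[0][0]}\otimes m_{[0][1]}\otimes m_{[1]}$ as $\alpha_M^{-1}(m_{[0]})\otimes m_{[1]1}\otimes\beta_H(m_{[1]2})$. With $h=m_{[1]}$ and $g=a_{[1]}$, the expression has the shape $\beta_H(h_2)\lambda(h_1 g)$, which is exactly the left side of (\ref{eq:transported-haar}), up to twists. Applying the lemma converts it into
\[
\lambda\bigl(\alpha_H^{-1}(h)\alpha_H(g_1)\bigr)\,\beta_H^2 S_H(g_2).
\]
Next, use the comodule coassociativity of $A$ to split $a_{[1]}$ as $a_{[0][1]}\otimes a_{[1]}$. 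Then recombine $m_{[0]}\cdot a_{[0][0]}$ with $\lambda(m_{[1]}a_{[0][1]})$ via the compatibility condition (\ref{eq:bihom-doi-hopf-compatibility}) into $\pi_M(m\cdot\tilde a_{[0]})$ for a suitably twisted $\tilde a$ (for example $\alpha_A^{\pm1}$ or $\beta_A^{\pm1}$ of $a$). The result is a sum of terms
\[
\pi_M(m\cdot \tilde a_{[0]})\otimes\beta_H^{k}S_H(\tilde a_{[1]}),
\]
each of which vanishes since $m\in\kappa(M)$.

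The hard part is the twist bookkeeping. The powers of $\alpha_H,\beta_H,\alpha_M$ must match precisely so that Lemma~\ref{lem:bihom-haar-identities} applies verbatim and the recombined element is genuinely of the form $\pi_M(m\cdot a'')$. I would first carry out the computation with all twists set to the identity, where it reduces to the classical argument. I would then reinsert the twists, using invariance of $\lambda$ under $\alpha_H$ and $\beta_H$, and the fact that $\kappa(M)$ is stable under the twists (Step 1), to absorb any leftover powers.
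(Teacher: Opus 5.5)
Your proposal is correct and follows the paper's proof essentially step for step. Stability under $\alpha_M^{\pm1},\beta_M^{\pm1}$ comes from $\pi_M$ commuting with the twists, and stability under the action comes from BiHom-associativity. Stability under the coaction comes from the transported Haar identity with $h=m_{[1]}$, $g=a_{[1]}$, then coassociativity of $A$, recombination via the compatibility condition, and the linear-independence argument.

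The twist bookkeeping you left open does close. Using $\lambda\circ\alpha_H=\lambda$, one gets exactly $\pi_M(m_{[0]}\cdot a)\otimes m_{[1]}=\alpha_M^{-1}\bigl(\pi_M(m\cdot\alpha_A^2(a_{[0]}))\bigr)\otimes\beta_H(S_H(a_{[1]}))$. So the twist needed is $\alpha_A^2$, not $\alpha_A^{\pm1}$ or $\beta_A^{\pm1}$.
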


\begin{proof}
First we show that $\kappa(M)$ is stable under the structure maps and
their inverses. Since $\pi_M$ commutes with $\alpha_M$ and $\beta_M$,
it also commutes with $\alpha_M^{-1}$ and $\beta_M^{-1}$.

For $m\in\kappa(M)$ and $a\in A$, we have
\[
\pi_M(\alpha_M(m)\cdot a)
=
\pi_M(\alpha_M(m\cdot\alpha_A^{-1}(a)))
=
\alpha_M(\pi_M(m\cdot\alpha_A^{-1}(a)))=0,
\]
and similarly $\pi_M(\beta_M(m)\cdot a)=0$. 
Thus
\[
\alpha_M(\kappa(M))\subseteq\kappa(M),
\qquad
\beta_M(\kappa(M))\subseteq\kappa(M).
\]

For the inverse maps, again for $m\in\kappa(M)$ and $a\in A$,
\[
\pi_M(\alpha_M^{-1}(m)\cdot a)
=
\pi_M(\alpha_M^{-1}(m\cdot\alpha_A(a)))
=
\alpha_M^{-1}(\pi_M(m\cdot\alpha_A(a)))=0,
\]
and similarly
\[
\pi_M(\beta_M^{-1}(m)\cdot a)
=
\pi_M(\beta_M^{-1}(m\cdot\beta_A(a)))
=
\beta_M^{-1}(\pi_M(m\cdot\beta_A(a)))=0.
\]
Hence
\[
\alpha_M^{-1}(\kappa(M))\subseteq\kappa(M),
\qquad
\beta_M^{-1}(\kappa(M))\subseteq\kappa(M).
\]

Next, $\kappa(M)$ is stable under the right $A$-action. Let
$m\in\kappa(M)$ and $c\in A$. For any $a\in A$, $(m\cdot c)\cdot a = \alpha_M(m)\cdot(c\beta_A^{-1}(a))$.

Therefore
\[
\pi_M((m\cdot c)\cdot a)
=
\pi_M(\alpha_M(m)\cdot(c\beta_A^{-1}(a)))=0.
\]
Thus $m\cdot c\in\kappa(M)$.

It remains to prove stability under the right $H$-coaction. We first
prove the following identity: for all $m\in M$ and $a\in A$,
\begin{equation}\label{eq:key-kappa-coaction-identity}
\pi_M(m_{[0]}\cdot a)\otimes m_{[1]}
=
\alpha_M^{-1}
\bigl(\pi_M(m\cdot\alpha_A^2(a_{[0]}))\bigr)
\otimes
\beta_H(S_H(a_{[1]})).
\end{equation}
Indeed, using the relative compatibility condition for $M$, we have
\begin{align*}
\pi_M(m_{[0]}\cdot a)\otimes m_{[1]}
&=
\lambda\bigl((m_{[0]}\cdot a)_{[1]}\bigr)
\alpha_M\bigl((m_{[0]}\cdot a)_{[0]}\bigr)
\otimes m_{[1]}                                                \\
&=
\lambda(m_{[0][1]}a_{[1]})
\alpha_M(m_{[0][0]}\cdot a_{[0]})
\otimes m_{[1]}                                                \\
&=
\bigl(\alpha_M(m_{[0][0]})\cdot\alpha_A(a_{[0]})\bigr)
\lambda(m_{[0][1]}a_{[1]})
\otimes m_{[1]}.
\end{align*}
Now apply the right $H$-BiHom-comodule identity of $M$,
\[
(\alpha_M^{-1}\otimes\Delta_H)\rho_M
=
(\rho_M\otimes\beta_H^{-1})\rho_M.
\]
Equivalently,
\[
\alpha_M(m_{[0][0]})\otimes m_{[0][1]}\otimes m_{[1]}
=
m_{[0]}\otimes m_{[1]1}\otimes\beta_H(m_{[1]2}).
\]
Thus
\begin{align*}
\pi_M(m_{[0]}\cdot a)\otimes m_{[1]}
&=
\bigl(m_{[0]}\cdot\alpha_A(a_{[0]})\bigr)
\lambda(m_{[1]1}a_{[1]})
\otimes\beta_H(m_{[1]2}).
\end{align*}
We now use the transported Haar identity
\eqref{eq:transported-haar} with $h=m_{[1]}$ and $g=a_{[1]}$.
It gives
\[
\beta_H(m_{[1]2})\lambda(m_{[1]1}a_{[1]})
=
\lambda\bigl(\alpha_H^{-1}(m_{[1]})\alpha_H(a_{[1]1})\bigr)
\beta_H^2(S_H(a_{[1]2})).
\]
Substituting this into the preceding expression yields
\begin{align*}
\pi_M(m_{[0]}\cdot a)\otimes m_{[1]}
&=
\bigl(m_{[0]}\cdot\alpha_A(a_{[0]})\bigr)
\lambda\bigl(\alpha_H^{-1}(m_{[1]})\alpha_H(a_{[1]1})\bigr)
\otimes
\beta_H^2(S_H(a_{[1]2})).
\end{align*}

Next we rewrite the $A$-coaction terms. Since $A$ is a right
$H$-BiHom-comodule, we have
\[
(\alpha_A^{-1}\otimes\Delta_H)\rho_A
=
(\rho_A\otimes\beta_H^{-1})\rho_A.
\]
Equivalently,
\[
\alpha_A^{-1}(a_{[0]})\otimes a_{[1]1}\otimes a_{[1]2}
=
a_{[0][0]}\otimes a_{[0][1]}\otimes\beta_H^{-1}(a_{[1]}).
\]
Applying $\alpha_A^2\otimes\alpha_H^2\otimes \beta_H^2S_H$ to the three tensor factors gives
\[
\alpha_A(a_{[0]})\otimes\alpha_H^2(a_{[1]1})
\otimes\beta_H^2(S_H(a_{[1]2}))
=
\alpha_A^2(a_{[0][0]})\otimes\alpha_H^2(a_{[0][1]})
\otimes\beta_H(S_H(a_{[1]})).
\]
Furthermore, using the multiplicativity of $\alpha_H$ and the identity
$\lambda\circ\alpha_H=\lambda$, we have
\[
\lambda\bigl(\alpha_H^{-1}(m_{[1]})\alpha_H(a_{[1]1})\bigr)
=
\lambda\bigl(m_{[1]}\alpha_H^2(a_{[1]1})\bigr).
\]
Hence
\[
\pi_M(m_{[0]}\cdot a)\otimes m_{[1]}
=
\bigl(m_{[0]}\cdot\alpha_A^2(a_{[0][0]})\bigr)
\lambda\bigl(m_{[1]}\alpha_H^2(a_{[0][1]})\bigr)
\otimes\beta_H(S_H(a_{[1]})).
\]

Finally, compute $\alpha_M^{-1} \bigl(\pi_M(m\cdot\alpha_A^2(a_{[0]}))\bigr)$. By the relative compatibility condition,
\[
\rho_M(m\cdot\alpha_A^2(a_{[0]}))
=
m_{[0]}\cdot\alpha_A^2(a_{[0][0]})
\otimes
m_{[1]}\alpha_H^2(a_{[0][1]}).
\]
Therefore
\begin{align*}
\alpha_M^{-1}
\bigl(\pi_M(m\cdot\alpha_A^2(a_{[0]}))\bigr)
&=
\alpha_M^{-1}
\left[
\lambda\bigl(m_{[1]}\alpha_H^2(a_{[0][1]})\bigr)
\alpha_M\bigl(m_{[0]}\cdot\alpha_A^2(a_{[0][0]})\bigr)
\right]                                                       \\
&=
\bigl(m_{[0]}\cdot\alpha_A^2(a_{[0][0]})\bigr)
\lambda\bigl(m_{[1]}\alpha_H^2(a_{[0][1]})\bigr).
\end{align*}
Combining the last two displayed formulas proves
\eqref{eq:key-kappa-coaction-identity}.

Now let $m\in\kappa(M)$. Then the right-hand side of
\eqref{eq:key-kappa-coaction-identity} is zero for every $a\in A$.
Hence
\[
\pi_M(m_{[0]}\cdot a)\otimes m_{[1]}=0,
\qquad a\in A.
\]
Write $\rho_M(m)=\sum_i m_i\otimes h_i$ with the elements $h_i$ linearly independent. Then
\[
\sum_i \pi_M(m_i\cdot a)\otimes h_i=0,
\qquad a\in A.
\]
By the linear independence of the $h_i$, we get
\[
\pi_M(m_i\cdot a)=0,
\qquad \forall a\in A,\ \forall i.
\]
Thus each $m_i$ lies in $\kappa(M)$, so
$\rho_M(m)=\sum_i m_i\otimes h_i\in\kappa(M)\otimes H$.
Hence $\kappa(M)$ is stable under the right $H$-coaction.
\end{proof}

\begin{lemma}\label{lem:kappa-quotient}
For every $M\in\mathcal{M}_A^H$, one has $\kappa(M/\kappa(M))=0$.
\end{lemma}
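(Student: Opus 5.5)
We prove Lemma~\ref{lem:kappa-quotient} by pulling elements of $\kappa(M/\kappa(M))$ back to $M$ and exploiting that $\pi$ is natural and is the identity on coinvariants.

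Put $\overline M=M/\kappa(M)$ and let $p:M\to\overline M$ be the canonical projection. By Lemma~\ref{lem:kappa-subobject} and Lemma~\ref{lem:categories-abelian}, $\overline M$ lies in $\mathcal M_A^H$ and $p$ is a surjective morphism. Take $\bar x\in\kappa(\overline M)$ and choose $x\in M$ with $p(x)=\bar x$. Our goal is $x\in\kappa(M)$, i.e.\ $\pi_M(x\cdot a)=0$ for every $a\in A$.

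Fix $a\in A$ and set $y=\pi_M(x\cdot a)$. By Lemma~\ref{lem:pi-basic-properties}(6) and the $A$-linearity of $p$,
\[
p(y)=\pi_{\overline M}(p(x)\cdot a)=\pi_{\overline M}(\bar x\cdot a)=0 .
\]
Hence $y\in\kappa(M)$. Lemma~\ref{lem:pi-basic-properties}(1) also gives $y\in M^{\operatorname{co}H}$. So it suffices to prove
\[
\kappa(M)\cap M^{\operatorname{co}H}=0 .
\]

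For $y$ in this intersection we have $\pi_M(y\cdot a)=0$ for all $a\in A$. Taking $a=1_A$ and using the unit axiom \eqref{eq:right-module-unit} gives $y\cdot 1_A=\alpha_M(y)$. Lemma~\ref{lem:pi-basic-properties}(4) and (2) then give
\[
0=\pi_M(\alpha_M(y))=\alpha_M(\pi_M(y))=\alpha_M(y).
\]
Since $\alpha_M$ is bijective, $y=0$.

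Therefore $\pi_M(x\cdot a)=0$ for every $a\in A$, so $x\in\kappa(M)$ and $\bar x=p(x)=0$. The only point needing care is that $\pi_{\overline M}\circ p=p\circ\pi_M$; this is exactly naturality, Lemma~\ref{lem:pi-basic-properties}(6), applied to the morphism $p$. So there is no real obstacle, as long as one first confirms that $p$ is a morphism in $\mathcal M_A^H$, which is where Lemma~\ref{lem:kappa-subobject} is needed.
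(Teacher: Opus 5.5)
Your proof is correct and follows essentially the same route as the paper's. You lift along the quotient map using naturality of $\pi$, which puts $\pi_M(x\cdot a)$ in $\kappa(M)\cap M^{\operatorname{co}H}$. You then show this intersection is zero by taking $a=1_A$ and using $\pi_M\alpha_M=\alpha_M\pi_M$ together with bijectivity of $\alpha_M$.
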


\begin{proof}
First observe that $\pi_M(\kappa(M))=0$. Indeed, if $m\in\kappa(M)$, then, taking $a=1_A$ in the definition
of $\kappa(M)$, we get $0=\pi_M(m\cdot1_A)=\pi_M(\alpha_M(m))$. 
Since $\pi_M$ commutes with $\alpha_M$, this gives $0=\alpha_M(\pi_M(m))$. As $\alpha_M$ is bijective, we obtain $\pi_M(m)=0$.

We also need the elementary fact
\begin{equation}\label{eq:kappa-intersection-zero}
\kappa(M)\cap M^{\operatorname{co}H}=0.
\end{equation}
Indeed, if $x\in\kappa(M)\cap M^{\operatorname{co}H}$, then
$x=\pi_M(x)$. Since $x\in\kappa(M)$, taking $a=1_A$ gives
\[
0=\pi_M(x\cdot1_A)=\pi_M(\alpha_M(x)).
\]
Using $\pi_M\alpha_M=\alpha_M\pi_M$, we get $0=\alpha_M(\pi_M(x))=\alpha_M(x)$. Since $\alpha_M$ is bijective, $x=0$.

Let $\overline m=m+\kappa(M)\in\kappa(M/\kappa(M))$. For every $a\in A$,
\[
0
=
\pi_{M/\kappa(M)}(\overline m\cdot a)
=
\pi_M(m\cdot a)+\kappa(M).
\]
Thus $\pi_M(m\cdot a)\in\kappa(M)$. But
$\pi_M(m\cdot a)\in M^{\operatorname{co}H}$ by
Lemma~\ref{lem:pi-basic-properties}. Hence
\[
\pi_M(m\cdot a)\in \kappa(M)\cap M^{\operatorname{co}H}=0.
\]
Therefore $\pi_M(m\cdot a)=0$ for all $a\in A$, so $m\in\kappa(M)$.
Thus $\overline m=0$, and consequently $\kappa(M/\kappa(M))=0$.
\end{proof}

\begin{lemma}\label{lem:coinvariant-exact}
The coinvariant functor $G$ is exact. Consequently, for every $M\in\mathcal{M}_A^H$, $(M/\kappa(M))^{\operatorname{co}H}\cong M^{\operatorname{co}H}$ as right $B$-BiHom-modules.
\end{lemma}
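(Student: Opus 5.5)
The plan is to use the projection $\pi$ from Lemma~\ref{lem:pi-basic-properties} as a natural splitting of the inclusion $G(M)\hookrightarrow M$.

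\emph{Left exactness.} Let $0\to M'\xrightarrow{f} M\xrightarrow{g} M''\to0$ be exact in $\mathcal M_A^H$. By Lemma~\ref{lem:categories-abelian}, this sequence is exact on underlying vector spaces. Left exactness of $G$ is then formal: $G(f)$ is a restriction of an injective map. If $m\in M^{\operatorname{co}H}$ and $g(m)=0$, then $m=f(m')$ for some $m'\in M'$. Since $\rho_M f=(f\otimes\operatorname{id})\rho_{M'}$ and $f\alpha_{M'}=\alpha_M f$, we get
\[
(f\otimes\operatorname{id})\bigl(\rho_{M'}(m')-\alpha_{M'}^{-1}(m')\otimes1_H\bigr)=0.
\]
Because $f\otimes\operatorname{id}_H$ is injective, $m'\in M'^{\operatorname{co}H}$.

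\emph{Surjectivity of $G(g)$.} This is where the Haar integral enters. Given $n\in M''^{\operatorname{co}H}$, choose any $m\in M$ with $g(m)=n$. Put $m^\circ=\pi_M(m)$. Then $m^\circ\in M^{\operatorname{co}H}$ by Lemma~\ref{lem:pi-basic-properties}(1). By parts (6) and (2) of the same lemma,
\[
g(m^\circ)=\pi_{M''}(g(m))=\pi_{M''}(n)=n.
\]
Hence $G(g)$ is surjective and $G$ is exact. All maps involved are $B$-linear and commute with the structure maps, by Lemma~\ref{lem:pi-basic-properties}(4),(5). So this is exactness in $\mathcal M_B$, whose kernels and cokernels are also computed on underlying spaces, as in Lemma~\ref{lem:categories-abelian}.

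\emph{The consequence.} Apply $G$ to
\[
0\to\kappa(M)\to M\to M/\kappa(M)\to0,
\]
which is a short exact sequence in $\mathcal M_A^H$ by Lemma~\ref{lem:kappa-subobject}. This gives an exact sequence
\[
0\to\kappa(M)^{\operatorname{co}H}\to M^{\operatorname{co}H}\to (M/\kappa(M))^{\operatorname{co}H}\to0.
\]
Here $\kappa(M)^{\operatorname{co}H}=\kappa(M)\cap M^{\operatorname{co}H}=0$ by \eqref{eq:kappa-intersection-zero}. Therefore the map induced by the quotient projection, $m\mapsto m+\kappa(M)$, is an isomorphism $M^{\operatorname{co}H}\cong(M/\kappa(M))^{\operatorname{co}H}$. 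It is $B$-linear and commutes with $\alpha$ and $\beta$ because it is $G$ applied to a morphism of $\mathcal M_A^H$.

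The only step needing care is the surjectivity argument. It depends on $\pi$ being natural, which is part (6) of Lemma~\ref{lem:pi-basic-properties}, and on $\pi$ fixing coinvariants; everything else is formal.
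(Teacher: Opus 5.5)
Your proof is correct and follows the paper's argument. Surjectivity on coinvariants comes from $g(\pi_M(m))=\pi_{M''}(g(m))=n$, and the isomorphism comes from applying $G$ to $0\to\kappa(M)\to M\to M/\kappa(M)\to0$ and using $\kappa(M)\cap M^{\operatorname{co}H}=0$. The only difference is that the paper gets left exactness from $G$ being a right adjoint (Theorem~\ref{thm:induction-coinvariant-adjunction}), while you verify it directly via injectivity of $f\otimes\operatorname{id}_H$; both are valid.
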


\begin{proof}
Since $G$ is right adjoint to the induction functor by
Theorem~\ref{thm:induction-coinvariant-adjunction}, it is left exact.

It remains to prove right exactness. Let $f:M\longrightarrow N$ be an epimorphism in $\mathcal{M}_A^H$, and let
$n\in N^{\operatorname{co}H}$. Choose $m\in M$ such that $f(m)=n$.
Since $f$ is right $H$-colinear,
\[
f(m_{[0]})\otimes m_{[1]}
=
\alpha_N^{-1}(n)\otimes1_H.
\]
Applying the map $\pi$ and using the naturality proved in
Lemma~\ref{lem:pi-basic-properties}, we obtain
\[
f(\pi_M(m))
=
\pi_N(f(m))
=
\pi_N(n)
=
n.
\]
Since $\pi_M(m)\in M^{\operatorname{co}H}$, the induced map
\[
f|_{M^{\operatorname{co}H}}:
M^{\operatorname{co}H}\longrightarrow N^{\operatorname{co}H}
\]
is surjective. Hence $G$ is exact.

Now apply $G$ to the short exact sequence
\[
0\longrightarrow \kappa(M)\longrightarrow M
\longrightarrow M/\kappa(M)\longrightarrow0.
\]
By \eqref{eq:kappa-intersection-zero}, one has $\kappa(M)^{\operatorname{co}H}=0$. Therefore
\[
(M/\kappa(M))^{\operatorname{co}H}
\cong
M^{\operatorname{co}H}/\kappa(M)^{\operatorname{co}H}
=
M^{\operatorname{co}H}.
\]
This isomorphism is induced by the quotient map
$q_M:M\to M/\kappa(M)$ and is natural in $M$.
\end{proof}

\begin{theorem}\label{thm:torsion-theory}
Let
\[
\mathcal{U}
=
\{\,T\in\mathcal{M}_A^H\mid \kappa(T)=T\,\},
\qquad
\mathcal{V}
=
\{\,F\in\mathcal{M}_A^H\mid \kappa(F)=0\,\}.
\]
Then $(\mathcal{U},\mathcal{V})$ is a hereditary torsion theory in
$\mathcal{M}_A^H$.
\end{theorem}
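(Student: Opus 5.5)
We verify the two axioms of a torsion theory, then heredity, using $\kappa$ as an idempotent radical. Repleteness of $\mathcal U$ and $\mathcal V$ is immediate from Corollary~\ref{cor:kappa-natural}: an isomorphism $f:M\to N$ satisfies $f(\kappa(M))\subseteq\kappa(N)$ and $f^{-1}(\kappa(N))\subseteq\kappa(M)$, so $f(\kappa(M))=\kappa(N)$.

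\emph{Axiom 1.} For $X\in\mathcal M_A^H$, take the sequence
\[
0\longrightarrow\kappa(X)\longrightarrow X\longrightarrow X/\kappa(X)\longrightarrow0 .
\]
By Lemma~\ref{lem:kappa-subobject} this is a short exact sequence in the abelian category $\mathcal M_A^H$ (Lemma~\ref{lem:categories-abelian}). By Lemma~\ref{lem:kappa-quotient}, $X/\kappa(X)\in\mathcal V$. It remains to show $\kappa(\kappa(X))=\kappa(X)$, i.e.\ $\kappa(X)\in\mathcal U$. This follows from a general heredity fact: for a subobject $N\subseteq M$ one has $\kappa(N)=N\cap\kappa(M)$. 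To see this, note that the coaction of $N$ is the restriction of $\rho_M$, so $\pi_N=\pi_M|_N$. Since $N$ is closed under the $A$-action, for $n\in N$ the condition ``$\pi_N(n\cdot a)=0$ for all $a$'' is literally the condition ``$\pi_M(n\cdot a)=0$ for all $a$''. Applying this with $N=\kappa(X)$ gives $\kappa(\kappa(X))=\kappa(X)$.

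\emph{Axiom 2.} Let $T\in\mathcal U$, $F\in\mathcal V$ and $f:T\to F$ a morphism. By Corollary~\ref{cor:kappa-natural}, $f(T)=f(\kappa(T))\subseteq\kappa(F)=0$, so $f=0$.

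\emph{Heredity.} If $T\in\mathcal U$ and $N\subseteq T$ is a subobject, the same identity gives $\kappa(N)=N\cap\kappa(T)=N\cap T=N$, so $N\in\mathcal U$. Note that subobjects in $\mathcal M_A^H$ are exactly the subspaces stable under the structure maps, the action and the coaction, by the description of monomorphisms in Lemma~\ref{lem:categories-abelian}.

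The only point needing care is the identification $\pi_N=\pi_M|_N$ for a subobject $N$. It holds because the inclusion is a morphism in $\mathcal M_A^H$, so it is item (6) of Lemma~\ref{lem:pi-basic-properties}. This gives the inclusion $N\cap\kappa(M)\subseteq\kappa(N)$ directly, while the reverse inclusion $\kappa(N)\subseteq\kappa(M)$ is naturality. So no real obstacle remains; the substantive work, namely that $\kappa(M)$ is a subcomodule and that $M/\kappa(M)$ is torsion-free, was already done in Lemmas~\ref{lem:kappa-subobject} and~\ref{lem:kappa-quotient}.
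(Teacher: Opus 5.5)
Your proof is correct and follows essentially the same route as the paper. You use the canonical sequence $0\to\kappa(X)\to X\to X/\kappa(X)\to 0$ with Lemmas~\ref{lem:kappa-subobject} and~\ref{lem:kappa-quotient}, the restriction identity $\pi_N=\pi_M|_N$ for idempotence and heredity, and naturality of $\kappa$ for the vanishing of morphisms from $\mathcal U$ to $\mathcal V$. Packaging the restriction argument as $\kappa(N)=N\cap\kappa(M)$, and checking repleteness explicitly (the paper leaves this implicit), are minor but welcome refinements.
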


\begin{proof}
For every $M\in\mathcal{M}_A^H$, Lemma~\ref{lem:kappa-subobject}
gives a short exact sequence
\[
0\longrightarrow \kappa(M)\longrightarrow M
\longrightarrow M/\kappa(M)\longrightarrow0.
\]
For $x\in\kappa(M)$, the projection and action on the subobject
$\kappa(M)$ are the restrictions of those on $M$.  Hence
$\pi_{\kappa(M)}(x\cdot a)=\pi_M(x\cdot a)=0$ for all $a\in A$, so
$\kappa(\kappa(M))=\kappa(M)$ and $\kappa(M)\in\mathcal{U}$.
By Lemma~\ref{lem:kappa-quotient}, $\kappa(M/\kappa(M))=0$, so $M/\kappa(M)\in\mathcal{V}$.

Let $T\in\mathcal{U}$, $F\in\mathcal{V}$, and let
$f:T\to F$ be a morphism in $\mathcal{M}_A^H$. By the naturality of $\pi$, we have $f(\kappa(T))\subseteq\kappa(F)$. Since $\kappa(T)=T$ and $\kappa(F)=0$, we get $f=0$.

Finally, let $T\in\mathcal{U}$ and let $i:X\hookrightarrow T$ be a subobject in $\mathcal{M}_A^H$. If $x\in X$, then
$i(x)\in T=\kappa(T)$, hence for every $a\in A$, $\pi_T(i(x)\cdot a)=0$. By naturality of $\pi$, we have
\[
i\bigl(\pi_X(x\cdot a)\bigr)
=
\pi_T(i(x)\cdot a)
=
0.
\]
Since $i$ is monic, it follows that
\[
\pi_X(x\cdot a)=0,
\qquad a\in A.
\]
Thus $x\in\kappa(X)$, so $X=\kappa(X)$ and therefore $X\in\mathcal{U}$.
Hence the torsion theory is hereditary.

\end{proof}

\begin{remark}\label{rem:torsion-kernel-coinvariants}
The torsion class has the description $\mathcal U=\ker G$.  Indeed, if
$\kappa(T)=T$, then
$T^{\operatorname{co}H}=T^{\operatorname{co}H}\cap\kappa(T)=0$ by
\eqref{eq:kappa-intersection-zero}.  Conversely, if
$T^{\operatorname{co}H}=0$, then
$\operatorname{Im}\pi_T=T^{\operatorname{co}H}=0$, and therefore
$\kappa(T)=T$.
\end{remark}

\section{Categorical equivalence via modified induction}

Set $Q(M)=\overline M:=M/\kappa(M)$ and write $\overline m$ for the
image of $m\in M$. By Lemma~\ref{lem:kappa-quotient}, $\overline M$ is
torsion-free.

The \textit{modified induction functor}
\[
\overline F=\overline{-\otimes_BA}:
\mathcal{M}_B\longrightarrow\mathcal{M}_A^H
\]
is defined by
$\overline F(N)
=
\overline{N\otimes_BA}
=
(N\otimes_BA)/\kappa(N\otimes_BA)$.

\begin{proposition}\label{prop:modified-induction-functor}
The above assignment extends to a well-defined functor $\overline F$.
For a morphism $f:N\to N'$ in $\mathcal{M}_B$, the morphism
\[
F(f):N\otimes_BA\longrightarrow N'\otimes_BA
\quad
\text{induces}\quad
\overline F(f):
\overline{N\otimes_BA}\longrightarrow
\overline{N'\otimes_BA}.
\]
\end{proposition}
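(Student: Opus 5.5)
The plan is to obtain $\overline F$ as the composite $Q\circ F$, where $F$ is the induction functor of Proposition~\ref{prop:induction-functor} and $Q(M)=M/\kappa(M)$. Everything then reduces to showing that $Q$ is a functor on $\mathcal M_A^H$.

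The first step concerns objects. By Lemma~\ref{lem:kappa-subobject}, $\kappa(N\otimes_BA)$ is a subobject of $N\otimes_BA$ in $\mathcal M_A^H$. By Lemma~\ref{lem:categories-abelian}, the quotient $\overline{N\otimes_BA}$ is again an object of $\mathcal M_A^H$; its structure maps, action and coaction are induced from those of $N\otimes_BA$ on cosets. Concretely,
\[
\overline{x}\cdot c=\overline{x\cdot c},\qquad
\rho(\overline x)=\overline{x_{[0]}}\otimes x_{[1]},
\]
and $\alpha,\beta$ act the same way. The coaction is well-defined because $\kappa$ is stable under the coaction and $(q\otimes\operatorname{id}_H)$ kills $\kappa\otimes H$.

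The second step concerns morphisms. Let $f:N\to N'$ be a morphism in $\mathcal M_B$. Then $F(f)$ is a morphism in $\mathcal M_A^H$, and Corollary~\ref{cor:kappa-natural} gives
\[
F(f)\bigl(\kappa(N\otimes_BA)\bigr)\subseteq\kappa(N'\otimes_BA).
\]
Hence $q_{N'}\circ F(f)$ vanishes on $\kappa(N\otimes_BA)$. By the universal property of the cokernel in the abelian category $\mathcal M_A^H$, it factors uniquely through $q_N$, giving $\overline F(f)$ with
\[
\overline F(f)(\overline{n\otimes_Ba})=\overline{f(n)\otimes_Ba}.
\]
This factorization is automatically a morphism in $\mathcal M_A^H$. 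One can also check this directly: commuting with $\alpha,\beta$, $A$-linearity and colinearity all descend from the corresponding properties of $F(f)$ through the surjection $q_N$.

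The third step is functoriality. Since $q_N$ is surjective, an induced map is determined by its values on representatives. So $\overline F(\operatorname{id}_N)=\operatorname{id}$ and $\overline F(gf)=\overline F(g)\,\overline F(f)$ follow from the corresponding identities for $F$, which were recorded in Proposition~\ref{prop:induction-functor}.

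The only point needing any care is the inclusion $F(f)(\kappa)\subseteq\kappa$. This is exactly the naturality of $\kappa$, which is already established in Corollary~\ref{cor:kappa-natural} via Lemma~\ref{lem:pi-basic-properties}(6). Consequently I do not expect a genuine obstacle here.
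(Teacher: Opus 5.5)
Your proposal is correct and follows essentially the same route as the paper. The only substantive point is the inclusion $F(f)(\kappa(N\otimes_BA))\subseteq\kappa(N'\otimes_BA)$: the paper checks it inline from the naturality of $\pi$ together with the $A$-linearity of $F(f)$, which is exactly what Corollary~\ref{cor:kappa-natural} packages, and your extra remarks on the quotient structure and the cokernel factorization only make explicit what the paper leaves implicit.
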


\begin{proof}
It remains only to check that $F(f)(\kappa(N\otimes_BA))\subseteq\kappa(N'\otimes_BA)$.
Let $x\in\kappa(N\otimes_BA)$. For every $a\in A$, by the naturality of
$\pi$ we have
\[
\pi_{N'\otimes_BA}(F(f)(x)\cdot a)
=
F(f)\bigl(\pi_{N\otimes_BA}(x\cdot a)\bigr)
=
0.
\]
Therefore $F(f)(x)\in\kappa(N'\otimes_BA)$. Hence $F(f)$ descends to the
quotients, and the induced maps preserve identities and composition.
\end{proof}

\begin{proposition}\label{prop:factor-adjunction}
For every $N\in\mathcal{M}_B$ and every $M\in\mathcal{M}_A^H$, there is
a natural bijection
\[
\operatorname{Hom}_{\mathcal{M}_A^H}
(\overline F(N),\overline M)
\cong
\operatorname{Hom}_{\mathcal{M}_B}
(N,M^{\operatorname{co}H}).
\]
In particular, if $M$ is torsion-free, then
\[
\operatorname{Hom}_{\mathcal{M}_A^H}
(\overline F(N),M)
\cong
\operatorname{Hom}_{\mathcal{M}_B}
(N,M^{\operatorname{co}H}).
\]
\end{proposition}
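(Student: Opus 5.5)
The bijection will be obtained by composing three bijections: the torsion-free reflection of $Q$, the adjunction $F\dashv G$ of Theorem~\ref{thm:induction-coinvariant-adjunction}, and the isomorphism $(M/\kappa(M))^{\operatorname{co}H}\cong M^{\operatorname{co}H}$ of Lemma~\ref{lem:coinvariant-exact}. Concretely,
\[
\operatorname{Hom}_{\mathcal M_A^H}(\overline F(N),\overline M)
\cong
\operatorname{Hom}_{\mathcal M_A^H}(F(N),\overline M)
\cong
\operatorname{Hom}_{\mathcal M_B}(N,\overline M^{\operatorname{co}H})
\cong
\operatorname{Hom}_{\mathcal M_B}(N,M^{\operatorname{co}H}),
\]
and each step should be natural in $N$ and $M$.

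\textbf{Step 1: the reflection.} For any $X\in\mathcal M_A^H$ and torsion-free $Y$ (that is, $\kappa(Y)=0$), precomposition with the quotient $q_X:X\to X/\kappa(X)$ gives a bijection
\[
\operatorname{Hom}(X/\kappa(X),Y)\to\operatorname{Hom}(X,Y).
\]
Injectivity holds because $q_X$ is an epimorphism. For surjectivity, take $g:X\to Y$. Corollary~\ref{cor:kappa-natural} gives $g(\kappa(X))\subseteq\kappa(Y)=0$. Hence $g$ factors through the vector-space quotient. The factored map commutes with $\alpha^{\pm1}$, $\beta^{\pm1}$, the action and the coaction, since the structure on $X/\kappa(X)$ is induced from $X$ (Lemma~\ref{lem:kappa-subobject} and Lemma~\ref{lem:categories-abelian}). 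We apply this with $X=F(N)$ and $Y=\overline M$, which is torsion-free by Lemma~\ref{lem:kappa-quotient}. Naturality in both variables follows from the naturality of $q$ with respect to the maps $F(f)$ and $\overline F(f)$ of Proposition~\ref{prop:modified-induction-functor}.

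\textbf{Step 2: the adjunction.} Theorem~\ref{thm:induction-coinvariant-adjunction} gives
\[
\operatorname{Hom}(F(N),\overline M)\cong\operatorname{Hom}(N,G(\overline M)),
\qquad g\mapsto G(g)\circ\eta_N,
\]
and this bijection is natural.

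\textbf{Step 3: coinvariants of the quotient.} Lemma~\ref{lem:coinvariant-exact} gives the natural isomorphism $G(q_M):M^{\operatorname{co}H}\to\overline M^{\operatorname{co}H}$ of $B$-modules. Postcomposition with its inverse yields the last bijection.

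For the special case, suppose $M$ is torsion-free. Then $\kappa(M)=0$ and $q_M:M\to\overline M$ is an isomorphism in $\mathcal M_A^H$. Substituting $M$ for $\overline M$ in the first Hom gives the second formula.

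I expect the main obstacle to be bookkeeping rather than any single step. Naturality in $M$ requires $Q$ to be a functor: a morphism $v:M\to M'$ must induce $\overline v$, which again follows from Corollary~\ref{cor:kappa-natural}. One also has to check that $G(q_M)$ is natural in $M$, i.e.\ that $\overline v\,q_M=q_{M'}v$, which holds by construction. I would write out the composite explicitly, sending $\phi:\overline F(N)\to\overline M$ to $G(q_M)^{-1}\circ G(\phi\circ q_{F(N)})\circ\eta_N$, and verify both naturality squares directly on elements.
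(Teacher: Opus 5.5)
Your proof is correct and follows essentially the same route as the paper. The paper also first replaces $M^{\operatorname{co}H}$ by $\overline M^{\operatorname{co}H}$ via Lemma~\ref{lem:coinvariant-exact}, and its explicit maps $\chi(f)(n)=f\bigl(\overline{\alpha_N^{-1}(n)\otimes_B1_A}\bigr)$ and $\omega(g)(\overline{n\otimes_Ba})=g(n)\cdot a$ are just your composite of $F\dashv G$ with the factorization through $q_{F(N)}$ (using $\kappa(\overline M)=0$), written out on elements; your modular version has the small advantage that naturality in $N$ and $M$, which the paper only asserts, follows formally from the naturality of $q$, $\eta$, and $G(q_M)$.
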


\begin{proof}
By Lemma~\ref{lem:coinvariant-exact}, we have a natural isomorphism $\overline M^{\operatorname{co}H}\cong M^{\operatorname{co}H}$. Thus, it suffices to construct a natural bijection
\[
\operatorname{Hom}_{\mathcal{M}_A^H}
(\overline F(N),\overline M)
\cong
\operatorname{Hom}_{\mathcal{M}_B}
(N,\overline M^{\operatorname{co}H}).
\]

Let $f:\overline F(N)\longrightarrow \overline M$ be a morphism in $\mathcal{M}_A^H$. Define $\chi(f):N\longrightarrow \overline M^{\operatorname{co}H}$ by
\[
\chi(f)(n)=f\bigl(\overline{\alpha_N^{-1}(n)\otimes_B1_A}\bigr).
\]
The element $\overline{\alpha_N^{-1}(n)\otimes_B1_A}$ is coinvariant,
so the $H$-colinearity of $f$ implies that $\chi(f)(n)$ is coinvariant.

We check that $\chi(f)$ is a morphism in $\mathcal{M}_B$. For the
structure maps,
\begin{align*}
\chi(f)(\alpha_N(n))
&=
f\bigl(\overline{n\otimes_B1_A}\bigr) \\
&=
\alpha_{\overline M}
\Bigl(f\bigl(\overline{\alpha_N^{-1}(n)\otimes_B1_A}\bigr)\Bigr) \\
&=
\alpha_{\overline M}(\chi(f)(n)),
\end{align*}
and similarly $\chi(f)(\beta_N(n))=\beta_{\overline M}(\chi(f)(n))$. 
For $b\in B$,
\[
\overline{\alpha_N^{-1}(n\cdot b)\otimes_B1_A}
=
\overline{n\otimes_B b}
=
\bigl(\overline{\alpha_N^{-1}(n)\otimes_B1_A}\bigr)\cdot b,
\]
hence
\[
\chi(f)(n\cdot b)=\chi(f)(n)\cdot b.
\]
So
\[
\chi(f)\in \operatorname{Hom}_{\mathcal{M}_B}
(N,\overline M^{\operatorname{co}H}).
\]

Conversely, let $g:N\longrightarrow \overline M^{\operatorname{co}H}$ be a morphism in $\mathcal{M}_B$. First define
\[
\widetilde{\omega}(g):N\otimes_BA\longrightarrow \overline M
\]
by $\widetilde{\omega}(g)(n\otimes_Ba)=g(n)\cdot a$.

We first check that $\widetilde{\omega}(g)$ is well-defined. For
$b\in B$,
\begin{align*}
\widetilde{\omega}(g)\bigl((n\cdot b)\otimes_B\beta_A(a)\bigr)
&=
g(n\cdot b)\cdot\beta_A(a) \\
&=
(g(n)\cdot b)\cdot\beta_A(a) \\
&=
\alpha_{\overline M}(g(n))\cdot(ba),
\end{align*}
whereas
\begin{align*}
\widetilde{\omega}(g)\bigl(\alpha_N(n)\otimes_Bba\bigr)
&=
g(\alpha_N(n))\cdot(ba) \\
&=
\alpha_{\overline M}(g(n))\cdot(ba).
\end{align*}
Thus $\widetilde{\omega}(g)$ respects the balanced relation.

Next, $\widetilde{\omega}(g)$ commutes with the structure maps:
\begin{align*}
\widetilde{\omega}(g)\bigl(\alpha_N(n)\otimes_B\alpha_A(a)\bigr)
&=
g(\alpha_N(n))\cdot\alpha_A(a) \\
&=
\alpha_{\overline M}(g(n))\cdot\alpha_A(a) \\
&=
\alpha_{\overline M}(g(n)\cdot a)
=
\alpha_{\overline M}\bigl(\widetilde{\omega}(g)(n\otimes_Ba)\bigr),
\end{align*}
and similarly
\[
\widetilde{\omega}(g)\bigl(\beta_N(n)\otimes_B\beta_A(a)\bigr)
=
\beta_{\overline M}\bigl(\widetilde{\omega}(g)(n\otimes_Ba)\bigr).
\]

It is right $A$-linear because
\begin{align*}
\widetilde{\omega}(g)\bigl((n\otimes_Ba)\cdot c\bigr)
&=
\widetilde{\omega}(g)\bigl(\alpha_N(n)\otimes_Ba\beta_A^{-1}(c)\bigr) \\
&=
g(\alpha_N(n))\cdot(a\beta_A^{-1}(c)) \\
&=
\alpha_{\overline M}(g(n))\cdot(a\beta_A^{-1}(c)) \\
&=
(g(n)\cdot a)\cdot c \\
&=
\widetilde{\omega}(g)(n\otimes_Ba)\cdot c.
\end{align*}

It is right $H$-colinear since $g(n)$ is coinvariant:
\begin{align*}
\rho_{\overline M}(\widetilde{\omega}(g)(n\otimes_Ba))
&=
\rho_{\overline M}(g(n)\cdot a) \\
&=
g(n)_{[0]}\cdot a_{[0]}\otimes g(n)_{[1]}a_{[1]} \\
&=
\alpha_{\overline M}^{-1}(g(n))\cdot a_{[0]}
\otimes 1_Ha_{[1]} \\
&=
\alpha_{\overline M}^{-1}(g(n))\cdot a_{[0]}
\otimes \beta_H(a_{[1]}) \\
&=
(\widetilde{\omega}(g)\otimes\operatorname{id}_H)
\rho_{F(N)}(n\otimes_Ba).
\end{align*}

Now let $x\in\kappa(N\otimes_BA)$. For every $c\in A$, by naturality of
$\pi$,
\[
\pi_{\overline M}\bigl(\widetilde{\omega}(g)(x)\cdot c\bigr)
=
\widetilde{\omega}(g)\bigl(\pi_{N\otimes_BA}(x\cdot c)\bigr)
=
0.
\]
Hence $\widetilde{\omega}(g)(x)\in\kappa(\overline M)=0$, because $\overline M$ is torsion-free. 
Therefore $\widetilde{\omega}(g)$ descends uniquely to a morphism $\omega(g):\overline F(N)\longrightarrow \overline M$ given by $\omega(g)(\overline{n\otimes_Ba})=g(n)\cdot a$.

We now verify that $\chi$ and $\omega$ are inverse to each other.
For $g:N\to \overline M^{\operatorname{co}H}$, we have
\begin{align*}
(\chi\omega)(g)(n)
&=
\omega(g)\bigl(\overline{\alpha_N^{-1}(n)\otimes_B1_A}\bigr) \\
&=
g(\alpha_N^{-1}(n))\cdot1_A \\
&=
\alpha_{\overline M}^{-1}(g(n))\cdot1_A \\
&=
g(n).
\end{align*}

For $f:\overline F(N)\to \overline M$, we have
\begin{align*}
(\omega\chi)(f)(\overline{n\otimes_Ba})
&=
\chi(f)(n)\cdot a \\
&=
f\bigl(\overline{\alpha_N^{-1}(n)\otimes_B1_A}\bigr)\cdot a \\
&=
f\Bigl(\bigl(\overline{\alpha_N^{-1}(n)\otimes_B1_A}\bigr)\cdot a\Bigr) \\
&=
f(\overline{n\otimes_Ba}).
\end{align*}
Therefore $\chi$ and $\omega$ are inverse bijections.

The construction is natural in both $N$ and $M$.
\end{proof}

\begin{proposition}\label{prop:torsion-free-reflector}
Let $\mathcal V\subseteq\mathcal M_A^H$ be the full subcategory of
torsion-free objects.  The assignment
\[
Q(M)=M/\kappa(M)
\]
extends to a functor $Q:\mathcal M_A^H\to\mathcal V$ that is left
adjoint to the inclusion $\mathcal V\hookrightarrow\mathcal M_A^H$.
Consequently, $\overline F=QF$, and for $V\in\mathcal V$ there are
natural bijections
\[
\operatorname{Hom}_{\mathcal V}(QF(N),V)
\cong
\operatorname{Hom}_{\mathcal M_A^H}(F(N),V)
\cong
\operatorname{Hom}_{\mathcal M_B}(N,G(V)).
\]
\end{proposition}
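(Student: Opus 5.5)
First I define $Q$ on morphisms. For $f:M\to N$ in $\mathcal M_A^H$, Corollary~\ref{cor:kappa-natural} gives $f(\kappa(M))\subseteq\kappa(N)$. Hence $f$ descends to $Q(f):M/\kappa(M)\to N/\kappa(N)$, $\overline m\mapsto\overline{f(m)}$. This map is again a morphism in $\mathcal M_A^H$, because the quotient structures are induced from those of $M$ and $N$ (Lemma~\ref{lem:categories-abelian}, Lemma~\ref{lem:kappa-subobject}). Lemma~\ref{lem:kappa-quotient} shows that $Q(M)\in\mathcal V$. Functoriality is immediate, since $Q(f)$ is determined by $Q(f)q_M=q_Nf$ with $q_M$ epic.

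Next I prove the reflection: for every $V\in\mathcal V$, precomposition with $q_M$ gives a bijection
\[
\operatorname{Hom}_{\mathcal V}(Q(M),V)\to\operatorname{Hom}_{\mathcal M_A^H}(M,V).
\]
Injectivity holds because $q_M$ is an epimorphism. For surjectivity, take $g:M\to V$. By Corollary~\ref{cor:kappa-natural}, $g(\kappa(M))\subseteq\kappa(V)=0$. So $g$ vanishes on $\kappa(M)$ and factors uniquely through $q_M$ as a linear map $\overline g$. This $\overline g$ commutes with the structure maps, the action and the coaction, because these are computed on representatives.

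For naturality in $M$, I use $Q(f)q_M=q_Nf$. Naturality in $V$ is trivial. This shows $Q\dashv$ inclusion, with unit $q$. Alternatively, it is the torsion-free reflector of the hereditary torsion theory of Theorem~\ref{thm:torsion-theory}.

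The equality $\overline F=QF$ holds by definition on objects. On morphisms, both $\overline F(f)$ (Proposition~\ref{prop:modified-induction-functor}) and $QF(f)$ are the maps induced by $F(f)$ on the quotients, so they agree.

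The displayed chain of bijections is obtained by composing the reflection bijection for $M=F(N)$ with the adjunction $F\dashv G$ of Theorem~\ref{thm:induction-coinvariant-adjunction}. Both bijections are natural, so the composite is natural in $N$ and $V$. As a consistency check, this recovers the torsion-free case of Proposition~\ref{prop:factor-adjunction}.

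I do not expect a real obstacle. The only point needing care is that the factored map $\overline g$ preserves the coaction. The coaction on $M/\kappa(M)$ is $(q_M\otimes\operatorname{id}_H)\rho_M$ on representatives, and since $q_M$ is surjective, colinearity of $\overline g$ follows from that of $g$.
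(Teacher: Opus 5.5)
Your proposal is correct and takes essentially the same route as the paper. Both arguments use $f(\kappa(M))\subseteq\kappa(M')$ (the naturality of $\pi$) to define $Q$ on morphisms, get the reflection by factoring any $g:M\to V$ with $V$ torsion-free through $q_M$ because $g(\kappa(M))\subseteq\kappa(V)=0$, and compose this with $F\dashv G$; you simply make explicit details the paper leaves implicit, such as colinearity of the factored map and the agreement of $\overline F$ and $QF$ on morphisms.
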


\begin{proof}
Naturality of $\pi$ implies that every morphism $f:M\to M'$ satisfies
$f(\kappa(M))\subseteq\kappa(M')$; hence $Q$ is a functor.  If $V$ is
torsion-free and $f:M\to V$, then
$f(\kappa(M))\subseteq\kappa(V)=0$.  Thus $f$ factors uniquely through
the quotient $q_M:M\to Q(M)$.  This factorization is natural in both
variables and proves $Q\dashv(\mathcal V\hookrightarrow\mathcal M_A^H)$.
The displayed bijections now follow from this reflection and from
$F\dashv G$.
\end{proof}

An object $M\in\mathcal{M}_A^H$ is called \textit{$0$-generated} if the
canonical morphism
\[
\delta_M:M^{\operatorname{co}H}\otimes_BA\longrightarrow M,
\qquad
\delta_M(m\otimes_Ba)=m\cdot a,
\]
is an epimorphism. Equivalently,
\[
M=M^{\operatorname{co}H}\cdot A.
\]
The category $\mathcal{M}_A^H$ is called $0$-generated if every object
of $\mathcal{M}_A^H$ is $0$-generated.

\begin{lemma}\label{lem:ordinary-unit-isomorphism}
For every $N\in\mathcal{M}_B$, the unit
\[
\eta_N:
N\longrightarrow (N\otimes_BA)^{\operatorname{co}H},
\qquad
\eta_N(n)=\alpha_N^{-1}(n)\otimes_B1_A
\]
is an isomorphism in $\mathcal{M}_B$.
\end{lemma}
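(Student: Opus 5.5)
The plan is to construct an explicit inverse to $\eta_N$ by means of the Haar projection $\pi_{N\otimes_BA}$ together with a ``$B$-valued'' projection on $A$. Injectivity is the real content; surjectivity onto coinvariants should come out of the same computation. The model is the classical argument: with a Haar integral, $\pi_A:A\to B$ is a $B$-bilinear retraction of the inclusion $B\hookrightarrow A$, and $N\otimes_B A\to N$, $n\otimes_B a\mapsto n\cdot\pi_A(a)$ inverts $\eta_N$ on coinvariants.

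\textbf{Step 1: the projection $\pi_A$.} Regard $A$ itself as an object of $\mathcal M_A^H$ (regular action, coaction $\rho_A$). Lemma~\ref{lem:pi-basic-properties} then gives $\pi_A:A\to B$ with $\pi_A|_B=\mathrm{id}$, and $\pi_A$ is right $B$-linear. I also need left $B$-linearity in the twisted form $\pi_A(ba)=\alpha_A(b)\,\beta_A^{-1}\bigl(\pi_A(\beta_A(a))\bigr)$, or whatever twist the computation forces. To get it, use $\rho_A(b)=\alpha_A^{-1}(b)\otimes 1_H$, $1_Hh=\beta_H(h)$, and $\lambda\circ\beta_H=\lambda$, exactly as in the proof of right $B$-linearity.

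\textbf{Step 2: the candidate inverse.} Define $\theta_N:N\otimes_BA\to N$ by $\theta_N(n\otimes_Ba)=\alpha_N^{k}(n)\cdot\beta^{l}_A\alpha_A^{j}\pi_A(a)$. The exponents will be chosen so that $\theta_N$ respects the balancing relation~\eqref{eq:balanced-relation}; this is where the left $B$-linearity from Step 1 is used, together with BiHom-associativity~\eqref{eq:right-module-assoc} in $N$. Normalise so that $\theta_N(\eta_N(n))=\alpha_N^{-1}(n)\cdot 1_A$ up to the chosen twist, i.e.\ equals $n$, since $\pi_A(1_A)=1_A$. This already shows $\eta_N$ is injective.

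\textbf{Step 3: surjectivity.} For $x=\sum n_i\otimes_B a_i\in (N\otimes_BA)^{\mathrm{co}H}$ we have $x=\pi_{N\otimes_BA}(x)$ by Lemma~\ref{lem:pi-basic-properties}. Using~\eqref{eq:F-coaction}, compute
\[
\pi_{N\otimes_BA}(n\otimes_Ba)=\lambda(\beta_H(a_{[1]}))\,n\otimes_B\alpha_A(a_{[0]})=n\otimes_B\pi_A(a).
\]
Since $\pi_A(a)\in B$, the balancing relation~\eqref{eq:balanced-relation-equivalent} together with $b1_A=\alpha_A(b)$ rewrites $n\otimes_B\pi_A(a)$ as $\eta_N$ of a twisted element $n'\cdot\pi_A(a)$ of $N$. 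Hence $x\in\operatorname{Im}\eta_N$. As $\eta_N$ is already a morphism in $\mathcal M_B$ by Theorem~\ref{thm:induction-coinvariant-adjunction}, bijectivity gives an isomorphism.

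In fact Step 3 alone gives surjectivity, and injectivity can be argued via $\theta_N$. Alternatively, and more simply, note that the map $N\to N\otimes_BA$ is split by $\theta_N$, which is well defined once Step 1 holds.

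The main obstacle is Step 1 combined with the bookkeeping in Step 2: showing that $\pi_A$ is left $B$-linear with the correct twists, so that $\theta_N$ is compatible with the balanced relation. I would first test the simplest choice $\theta_N(n\otimes_Ba)=\alpha_N^{-1}(n)\cdot\beta_A^{-1}\pi_A(\beta_A a)$ and adjust the exponents after checking against~\eqref{eq:balanced-relation}.
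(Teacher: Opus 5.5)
This is essentially the paper's proof: the paper uses $\widetilde\xi_N(n\otimes_Ba)=n\cdot\pi_A(a)$, checks the balancing relation via $\pi_A\beta_A=\beta_A\pi_A$ and the left $B$-linearity $\pi_A(ba)=b\,\pi_A(a)$ (derived from exactly the three facts you list), and then proves $\xi_N\eta_N=\operatorname{id}$ and $\eta_N\widetilde\xi_N=\pi_{N\otimes_BA}$, which is your Step~3 computation. When you do the computation, all your exponents come out $0$, so no twist appears. Your guessed form reduces to $\pi_A(ba)=\alpha_A(b)\pi_A(a)$ and your trial inverse reduces to $\alpha_N^{-1}(n)\cdot\pi_A(a)$; both are wrong whenever $\alpha_B\neq\operatorname{id}$, and the trial inverse violates the balancing relation and gives $\theta_N\eta_N=\alpha_N^{-1}$.
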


\begin{proof}
View $A$ as an object of $\mathcal{M}_A^H$ via its regular right
$A$-action. By Lemma~\ref{lem:pi-basic-properties} applied to $A$, the
map
\[
\pi_A(a)=\lambda(a_{[1]})\alpha_A(a_{[0]})
\]
takes values in $B=A^{\operatorname{co}H}$ and commutes with
$\beta_A$.

For $b\in B$ and $a\in A$, we also have
\[
\pi_A(ba)
=
\lambda((ba)_{[1]})\,\alpha_A((ba)_{[0]})
=
\lambda(1_Ha_{[1]})\,\alpha_A(\alpha_A^{-1}(b)a_{[0]})
=
b\,\lambda(a_{[1]})\alpha_A(a_{[0]})
=
b\,\pi_A(a),
\]
since
\[
\rho_A(b)=\alpha_A^{-1}(b)\otimes1_H,
\qquad
1_Ha_{[1]}=\beta_H(a_{[1]}),
\qquad
\lambda\circ\beta_H=\lambda.
\]

Define
\[
\widetilde{\xi}_N:N\otimes_BA\longrightarrow N,
\qquad
\widetilde{\xi}_N(n\otimes_Ba)=n\cdot\pi_A(a).
\]
We check that $\widetilde{\xi}_N$ is well-defined. For $b\in B$,
\begin{align*}
\widetilde{\xi}_N\bigl((n\cdot b)\otimes_B\beta_A(a)\bigr)
&=
(n\cdot b)\cdot\pi_A(\beta_A(a)) \\
&=
(n\cdot b)\cdot\beta_B(\pi_A(a)) \\
&=
\alpha_N(n)\cdot\bigl(b\,\pi_A(a)\bigr) \\
&=
\alpha_N(n)\cdot\pi_A(ba) \\
&=
\widetilde{\xi}_N\bigl(\alpha_N(n)\otimes_Bba\bigr).
\end{align*}
Thus $\widetilde{\xi}_N$ is well-defined.

Restrict $\widetilde{\xi}_N$ to the coinvariants:
\[
\xi_N:=\widetilde{\xi}_N|_{(N\otimes_BA)^{\operatorname{co}H}}
:
(N\otimes_BA)^{\operatorname{co}H}\longrightarrow N.
\]
For $n\in N$,
\[
(\xi_N\eta_N)(n)
=
\xi_N(\alpha_N^{-1}(n)\otimes_B1_A)
=
\alpha_N^{-1}(n)\cdot\pi_A(1_A)
=
\alpha_N^{-1}(n)\cdot1_B
=
n.
\]

For a pure tensor $n\otimes_Ba$, we compute
\[
\pi_{N\otimes_BA}(n\otimes_Ba)
=
\lambda(\beta_H(a_{[1]}))\,
\alpha_{F(N)}(\alpha_N^{-1}(n)\otimes_Ba_{[0]})
=
n\otimes_B\pi_A(a).
\]
The balancing relation gives
\begin{align*}
\eta_N\bigl(\widetilde\xi_N(n\otimes_Ba)\bigr)
&=\eta_N(n\cdot\pi_A(a))\\
&=\bigl(\alpha_N^{-1}(n)\cdot
\alpha_B^{-1}(\pi_A(a))\bigr)\otimes_B1_A\\
&=n\otimes_B\pi_A(a).
\end{align*}
In the last equality we used the balancing relation and
$\alpha_B^{-1}(\pi_A(a))1_A=\pi_A(a)$.  Thus
$\eta_N\widetilde\xi_N=\pi_{N\otimes_BA}$.  By linearity, for every
$x\in (N\otimes_BA)^{\operatorname{co}H}$,
\[
(\eta_N\xi_N)(x)=\pi_{N\otimes_BA}(x)=x,
\]
because $\pi_{N\otimes_BA}$ is the identity on coinvariants. Therefore
$\eta_N$ is an isomorphism in $\mathcal M_B$ with inverse $\xi_N$.
\end{proof}

\begin{lemma}\label{lem:unique-extension}
Let $M\in\mathcal{M}_A^H$ be $0$-generated and let $N\in\mathcal{M}_B$. For every morphism $\theta:M^{\operatorname{co}H}\longrightarrow N$ in $\mathcal{M}_B$, there exists a unique morphism
\[
\vartheta:M\longrightarrow \overline{N\otimes_BA}
\]
in $\mathcal{M}_A^H$ such that
\[
\vartheta(m)=
\overline{\alpha_N^{-1}(\theta(m))\otimes_B1_A},
\qquad m\in M^{\operatorname{co}H}.
\]
\end{lemma}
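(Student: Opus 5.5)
Uniqueness comes first and is immediate. Since $M$ is $0$-generated, every element of $M$ is a finite sum $\sum_i m_i\cdot a_i$ with $m_i\in M^{\operatorname{co}H}$. Any morphism $\vartheta$ in $\mathcal M_A^H$ with the prescribed values on coinvariants therefore satisfies
\[
\vartheta\Bigl(\sum_i m_i\cdot a_i\Bigr)=\sum_i\overline{\alpha_N^{-1}(\theta(m_i))\otimes_B1_A}\cdot a_i=\sum_i\overline{\theta(m_i)\otimes_B a_i},
\]
using \eqref{eq:F-action} and $1_A\beta_A^{-1}(a)=a$. So $\vartheta$ is determined.

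For existence, I will not try to define $\vartheta$ directly on $M$, because showing that the formula above is independent of the chosen representation is exactly the difficulty. Instead I will factor through $\delta_M$. The composite $\Theta=q_{F(N)}\circ F(\theta):M^{\operatorname{co}H}\otimes_BA\to\overline{N\otimes_BA}$ is a morphism in $\mathcal M_A^H$, and it is given by $m\otimes_Ba\mapsto\overline{\theta(m)\otimes_Ba}$. Because $\delta_M$ is an epimorphism, hence surjective by Lemma~\ref{lem:categories-abelian}, it suffices to prove $\Theta(\ker\delta_M)=0$. Once that holds, $\Theta$ descends to a morphism $\vartheta$ with $\vartheta\delta_M=\Theta$. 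The descended map automatically commutes with the structure maps, is $A$-linear and is $H$-colinear, because the cokernel structure is created on vector spaces. Evaluating at $\delta_M(\alpha_M^{-1}(m)\otimes_B1_A)=m$ for coinvariant $m$ then gives the required formula, using that $\theta$ commutes with $\alpha$.

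The main step is $\Theta(\ker\delta_M)=0$, and I will use the torsion theory for it. Let $K=\ker\delta_M$, which is a subobject in $\mathcal M_A^H$. Exactness of $G$ (Lemma~\ref{lem:coinvariant-exact}) gives $K^{\operatorname{co}H}=\ker\bigl(G(\delta_M)\bigr)$. The triangle identity $G(\delta_M)\eta_{G(M)}=\mathrm{id}$, together with Lemma~\ref{lem:ordinary-unit-isomorphism} stating that $\eta_{G(M)}$ is an isomorphism, shows that $G(\delta_M)$ is an isomorphism. Hence $K^{\operatorname{co}H}=0$, so $K\in\mathcal U$ by Remark~\ref{rem:torsion-kernel-coinvariants}. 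The image $\Theta(K)$ is then a quotient of a torsion object landing in the torsion-free object $\overline{N\otimes_BA}$. Theorem~\ref{thm:torsion-theory} says every morphism $K\to\overline{N\otimes_BA}$ vanishes, so $\Theta|_K=0$. This settles existence.

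The remaining checks are routine: the formula for $\Theta$, and the identity $\delta_M(\alpha_M^{-1}(m)\otimes_B1_A)=m$, which follows from $\alpha_M^{-1}(m)\cdot 1_A=m$. The conceptual heart of the proof is identifying $\ker\delta_M$ as torsion via $G(\delta_M)$ being an isomorphism.
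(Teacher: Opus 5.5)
Your proof is correct and takes essentially the paper's route: you factor $q\circ F(\theta)$ through the epimorphism $\delta_M$, and the key input is that $G(\delta_M)$ is an isomorphism (by the triangle identity and Lemma~\ref{lem:ordinary-unit-isomorphism}), which makes $\ker\delta_M$ torsion and hence killed by any morphism into the torsion-free object $\overline{N\otimes_BA}$. The only difference is packaging: the paper verifies $\ker\delta_M\subseteq\kappa(Y)\subseteq\ker\phi$ elementwise using $\pi_Y$, whereas you show $(\ker\delta_M)^{\operatorname{co}H}=0$ and then invoke Remark~\ref{rem:torsion-kernel-coinvariants} together with the Hom-vanishing in Theorem~\ref{thm:torsion-theory}.
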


\begin{proof}
Set $Y:=M^{\operatorname{co}H}\otimes_BA$. Since $M$ is $0$-generated, the counit $\delta_M:Y\longrightarrow M$ is an epimorphism. Define
\[
\phi:Y\longrightarrow \overline{N\otimes_BA},
\qquad
\phi(m\otimes_Ba)=\overline{\theta(m)\otimes_Ba}.
\]
This is a morphism in $\mathcal{M}_A^H$.

We claim that $\ker\delta_M\subseteq\kappa(Y)$. Let $y\in\ker\delta_M$ and $a\in A$. Since $\ker\delta_M$ is an
$A$-submodule of $Y$, we have $y\cdot a\in\ker\delta_M$. By naturality
of $\pi$,
\[
\delta_M\bigl(\pi_Y(y\cdot a)\bigr)
=
\pi_M\bigl(\delta_M(y\cdot a)\bigr)
=
0.
\]
Now $\pi_Y(y\cdot a)\in Y^{\operatorname{co}H}$, and $G(\delta_M):Y^{\operatorname{co}H}\longrightarrow M^{\operatorname{co}H}$ is an isomorphism: indeed, by the triangular identity,
\[
G(\delta_M)\circ\eta_{M^{\operatorname{co}H}}
=
\operatorname{id}_{M^{\operatorname{co}H}},
\]
and $\eta_{M^{\operatorname{co}H}}$ is an isomorphism by
Lemma~\ref{lem:ordinary-unit-isomorphism}. Hence
\[
\pi_Y(y\cdot a)=0,
\qquad a\in A,
\]
so $y\in\kappa(Y)$. This proves $\ker\delta_M\subseteq\kappa(Y)$.

Next, we show that $\kappa(Y)\subseteq\ker\phi$. Let $y\in\kappa(Y)$. Then for every $a\in A$, naturality of $\pi$ gives
\[
\pi_{\overline{N\otimes_BA}}(\phi(y)\cdot a)
=
\phi\bigl(\pi_Y(y\cdot a)\bigr)
=
0.
\]
Hence $\phi(y)\in\kappa(\overline{N\otimes_BA})=0$, because
$\overline{N\otimes_BA}$ is torsion-free. Therefore $\kappa(Y)\subseteq\ker\phi$.

We have proved
\[
\ker\delta_M\subseteq\kappa(Y)\subseteq\ker\phi.
\]
Hence $\phi$ factors uniquely through $\delta_M$, giving a unique
morphism
\[
\vartheta:M\longrightarrow\overline{N\otimes_BA}
\]
such that
\[
\vartheta\circ\delta_M=\phi.
\]

For $m\in M^{\operatorname{co}H}$, we have $m=\alpha_M^{-1}(m)\cdot1_A$, and therefore
\begin{align*}
\vartheta(m)
&=\vartheta\bigl(\delta_M(\alpha_M^{-1}(m)\otimes_B1_A)\bigr)\\
&=\phi(\alpha_M^{-1}(m)\otimes_B1_A)\\
&=\overline{\theta(\alpha_M^{-1}(m))\otimes_B1_A}\\
&=\overline{\alpha_N^{-1}(\theta(m))\otimes_B1_A}.
\end{align*}
Uniqueness follows because $\delta_M$ is an epimorphism.
\end{proof}

\begin{lemma}\label{lem:factor-unit-isomorphism}
For every $N\in\mathcal{M}_B$, the factor unit
\[
\overline\eta_N:
N\longrightarrow
(\overline{N\otimes_BA})^{\operatorname{co}H},
\qquad
\overline\eta_N(n)=\overline{\alpha_N^{-1}(n)\otimes_B1_A},
\]
is an isomorphism in $\mathcal{M}_B$.
\end{lemma}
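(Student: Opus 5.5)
The plan is to factor $\overline\eta_N$ through the ordinary unit and the coinvariant isomorphism, and then read off bijectivity from results already proved. Write $q:=q_{N\otimes_BA}:N\otimes_BA\to\overline{N\otimes_BA}$ for the quotient map, which is a morphism in $\mathcal M_A^H$. By construction,
\[
\overline\eta_N(n)=q\bigl(\alpha_N^{-1}(n)\otimes_B1_A\bigr)=G(q)\bigl(\eta_N(n)\bigr),
\]
so $\overline\eta_N=G(q)\circ\eta_N$. In particular, $\overline\eta_N$ is a morphism in $\mathcal M_B$, since it is a composite of two such morphisms.

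The argument then reduces to two facts already available in the paper.

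\begin{enumerate}
\item By Lemma~\ref{lem:ordinary-unit-isomorphism}, $\eta_N:N\to(N\otimes_BA)^{\operatorname{co}H}$ is an isomorphism in $\mathcal M_B$.
\item By Lemma~\ref{lem:coinvariant-exact} applied to $M=N\otimes_BA$, the map $G(q):(N\otimes_BA)^{\operatorname{co}H}\to(\overline{N\otimes_BA})^{\operatorname{co}H}$ is an isomorphism of right $B$-BiHom-modules.
\end{enumerate}

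For the second fact I would spell out the two directions, to make sure the isomorphism there really is the one induced by $q$. Injectivity follows from $\ker G(q)=\kappa(N\otimes_BA)\cap(N\otimes_BA)^{\operatorname{co}H}=0$ by \eqref{eq:kappa-intersection-zero}. Surjectivity follows from exactness of $G$, or concretely as follows. Given a coinvariant $\overline x$, lift it to $x$. By naturality of $\pi$ (Lemma~\ref{lem:pi-basic-properties}) we get $q(\pi(x))=\pi(\overline x)=\overline x$, and $\pi(x)$ is coinvariant.

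The composite of these two isomorphisms is $\overline\eta_N$, so $\overline\eta_N$ is an isomorphism in $\mathcal M_B$. Its inverse is $\xi_N\circ G(q)^{-1}$. The only point needing care is this identification of the isomorphism in Lemma~\ref{lem:coinvariant-exact} with the restriction of $q$, and the explicit injectivity and surjectivity check above handles it.
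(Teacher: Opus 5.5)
Your proposal is correct and follows the paper's proof: the paper also writes $\overline\eta_N=G(q_N)\circ\eta_N$, cites Lemma~\ref{lem:ordinary-unit-isomorphism} for $\eta_N$, and obtains bijectivity of $G(q_N)$ from the exactness of $G$ together with $\kappa(N\otimes_BA)\cap(N\otimes_BA)^{\operatorname{co}H}=0$. Your explicit injectivity and surjectivity check, which uses the naturality of $\pi$, simply unpacks that exactness argument.
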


\begin{proof}
Let
\[
q_N:N\otimes_BA\longrightarrow \overline{N\otimes_BA}
\]
be the canonical quotient map. Since the coinvariant functor $G$ is exact
by Lemma~\ref{lem:coinvariant-exact}, applying $G$ to
\[
0\longrightarrow\kappa(N\otimes_BA)
\longrightarrow N\otimes_BA\xrightarrow{q_N}
\overline{N\otimes_BA}\longrightarrow0
\]
gives a short exact sequence.  Moreover,
\[
G(\kappa(N\otimes_BA))
=\kappa(N\otimes_BA)\cap(N\otimes_BA)^{\operatorname{co}H}=0
\]
by \eqref{eq:kappa-intersection-zero}.  Hence the induced map
\[
G(q_N):
(N\otimes_BA)^{\operatorname{co}H}
\longrightarrow
(\overline{N\otimes_BA})^{\operatorname{co}H}
\]
is an isomorphism.

By definition of the factor unit, $\overline{\eta}_N=G(q_N)\circ \eta_N$. Now $\eta_N$ is an isomorphism by Lemma~\ref{lem:ordinary-unit-isomorphism}. Therefore $\overline{\eta}_N$ is an isomorphism.
\end{proof}

\begin{lemma}\label{lem:factor-counit-isomorphism}
Let $M\in\mathcal{M}_A^H$ be $0$-generated. Then the factor counit
\[
\overline\delta_M:
\overline{M^{\operatorname{co}H}\otimes_BA}
\longrightarrow
\overline M,
\qquad
\overline\delta_M(\overline{m\otimes_Ba})
=
\overline{m\cdot a},
\]
is an isomorphism in $\mathcal{M}_A^H$.
\end{lemma}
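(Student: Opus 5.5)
We show that $\overline\delta_M$ is well defined, surjective and injective, reusing the kernel inclusion from the proof of Lemma~\ref{lem:unique-extension}. Put $Y:=M^{\operatorname{co}H}\otimes_BA$. By Theorem~\ref{thm:induction-coinvariant-adjunction}, $\delta_M:Y\to M$ is a morphism in $\mathcal M_A^H$. By Corollary~\ref{cor:kappa-natural}, $\delta_M(\kappa(Y))\subseteq\kappa(M)$. Hence $q_M\circ\delta_M$ descends to a morphism $\overline\delta_M=Q(\delta_M):\overline Y\to\overline M$, which is the displayed formula. Surjectivity is immediate: $\delta_M$ is an epimorphism because $M$ is $0$-generated, and $q_M$ is onto. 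Since kernels and cokernels in $\mathcal M_A^H$ are computed on underlying vector spaces (Lemma~\ref{lem:categories-abelian}), a bijective morphism is an isomorphism, so only injectivity remains.

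For injectivity, let $y\in Y$ with $\delta_M(y)\in\kappa(M)$; we must show $y\in\kappa(Y)$. Fix $a\in A$. Then $\delta_M(y\cdot a)=\delta_M(y)\cdot a\in\kappa(M)$, because $\kappa(M)$ is an $A$-submodule by Lemma~\ref{lem:kappa-subobject}. By the observation at the start of the proof of Lemma~\ref{lem:kappa-quotient}, $\pi_M$ vanishes on $\kappa(M)$. Naturality of $\pi$ (Lemma~\ref{lem:pi-basic-properties}) then gives
\[
\delta_M\bigl(\pi_Y(y\cdot a)\bigr)=\pi_M\bigl(\delta_M(y\cdot a)\bigr)=0 .
\]
Now $\pi_Y(y\cdot a)\in Y^{\operatorname{co}H}$. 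As in the proof of Lemma~\ref{lem:unique-extension}, the triangular identity $G(\delta_M)\circ\eta_{M^{\operatorname{co}H}}=\operatorname{id}$, combined with Lemma~\ref{lem:ordinary-unit-isomorphism}, shows that $G(\delta_M)$ is an isomorphism. Hence $\pi_Y(y\cdot a)=0$ for all $a$, that is, $y\in\kappa(Y)$. Therefore $\delta_M^{-1}(\kappa(M))=\kappa(Y)$, and $\overline\delta_M$ is injective.

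The main point is the step above: the argument needs $\pi_M(\kappa(M))=0$, and not merely $\ker\delta_M\subseteq\kappa(Y)$, since the preimage of $\kappa(M)$ is larger than $\ker\delta_M$. Both facts are already available, so I expect no serious obstacle. As an alternative, one could build an inverse via Lemma~\ref{lem:unique-extension} with $N=M^{\operatorname{co}H}$ and $\theta=\operatorname{id}$, which gives $\vartheta:M\to\overline Y$. One would then check $\vartheta(\kappa(M))=0$ by torsion-freeness of $\overline Y$ and verify that the two composites are the identity, using that both objects are generated by their coinvariants. The direct argument above is shorter, so I would use it.
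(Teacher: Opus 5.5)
Your proof is correct, and it takes a different route from the paper.

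\textbf{The paper's route.} It first treats torsion-free $M$. It shows that $Z=\overline{M^{\operatorname{co}H}\otimes_BA}$ is $0$-generated and applies Lemma~\ref{lem:unique-extension} with $\theta=\operatorname{id}$ to get $\vartheta\colon M\to Z$. It then checks that $\overline\delta_M\vartheta$ and $\vartheta\overline\delta_M$ are identities, because both objects are generated by their coinvariants. The general case is reduced to $\overline M$ via the isomorphism $G(q)$ of Lemma~\ref{lem:coinvariant-exact} and the identity $\overline\delta_{\overline M}\,\overline F(G(q))=\overline\delta_M$.

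\textbf{Your route.} You prove bijectivity directly by showing $\delta_M^{-1}(\kappa(M))=\kappa(Y)$. The key ingredient is the same: $G(\delta_M)$ is injective on coinvariants, by the triangle identity and Lemma~\ref{lem:ordinary-unit-isomorphism}. In the paper this appears only inside the proof of Lemma~\ref{lem:unique-extension}, where it gives $\ker\delta_M\subseteq\kappa(Y)$.

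\textbf{Comparison.} Your argument is shorter. It needs neither Lemma~\ref{lem:unique-extension}, nor Proposition~\ref{prop:factor-adjunction}, nor the two-stage torsion-free reduction. It also shows more clearly where the hypothesis enters: $\overline\delta_M$ is injective for every $M\in\mathcal M_A^H$, and $0$-generation is used only for surjectivity. The paper's route, in exchange, gives the inverse explicitly as the unique morphism $\vartheta$ determined by its values on coinvariants. That fits the universal-property viewpoint used throughout Section~4.

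\textbf{A small simplification.} You do not need that $\kappa(M)$ is an $A$-submodule, nor that $\pi_M(\kappa(M))=0$. Since $\delta_M(y)\in\kappa(M)$, the definition of $\kappa(M)$ gives $\pi_M(\delta_M(y)\cdot a)=0$ for all $a\in A$ directly. Combined with $A$-linearity of $\delta_M$, this is all the step requires.
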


\begin{proof}
First we verify that the displayed map is well-defined.  If
$x\in\kappa(M^{\operatorname{co}H}\otimes_BA)$ and $a\in A$, then
$A$-linearity of $\delta_M$ and naturality of $\pi$ give
\[
\pi_M(\delta_M(x)\cdot a)
=\pi_M(\delta_M(x\cdot a))
=\delta_M\bigl(\pi_{F(G(M))}(x\cdot a)\bigr)=0.
\]
Thus $\delta_M(x)\in\kappa(M)$, so $\delta_M$ descends to
$\overline\delta_M$.

First assume that $M$ is torsion-free, so that $\overline M=M$, and set $Z:=\overline{M^{\operatorname{co}H}\otimes_BA}$. 
The object $Z$ is $0$-generated, because every element $\overline{m\otimes_Ba}\in Z$ can be written as
\[
\overline{m\otimes_Ba}
=
\bigl(\overline{\alpha_M^{-1}(m)\otimes_B1_A}\bigr)\cdot a,
\]
and the first factor is coinvariant.

By Lemma~\ref{lem:unique-extension}, applied to $\theta=\operatorname{id}_{M^{\operatorname{co}H}}$, there exists a morphism $\vartheta:M\longrightarrow Z$ such that
\[
\vartheta(m)=\overline{\alpha_M^{-1}(m)\otimes_B1_A},
\qquad m\in M^{\operatorname{co}H}.
\]

By Proposition~\ref{prop:factor-adjunction}, the factor counit $\overline\delta_M:Z\longrightarrow M$ corresponds to $\operatorname{id}_{M^{\operatorname{co}H}}$, hence
\[
\overline\delta_M\bigl(\overline{\alpha_M^{-1}(m)\otimes_B1_A}\bigr)=m,
\qquad m\in M^{\operatorname{co}H}.
\]
Therefore
\[
(\overline\delta_M\vartheta)(m)=m,
\qquad m\in M^{\operatorname{co}H}.
\]
Since $M$ is $0$-generated and both morphisms are right $A$-linear, it
follows that $\overline\delta_M\vartheta=\operatorname{id}_M$.

On the other hand, for $m\in M^{\operatorname{co}H}$,
\[
(\vartheta\overline\delta_M)
\bigl(\overline{\alpha_M^{-1}(m)\otimes_B1_A}\bigr)
=
\vartheta(m)
=
\overline{\alpha_M^{-1}(m)\otimes_B1_A}.
\]
Since $Z$ is $0$-generated and both morphisms are right $A$-linear, we
obtain $\vartheta\overline\delta_M=\operatorname{id}_Z$. 
Thus $\overline\delta_M$ is an isomorphism whenever $M$ is torsion-free.

Now let $M$ be an arbitrary $0$-generated object. Let $q:M\longrightarrow \overline M$ be the quotient map. Since $M$ is $0$-generated, the composite
\[
q\circ\delta_M:
M^{\operatorname{co}H}\otimes_BA\longrightarrow \overline M
\]
is an epimorphism. By Lemma~\ref{lem:coinvariant-exact}, the map
$G(q):M^{\operatorname{co}H}\to\overline M^{\operatorname{co}H}$ is
an isomorphism. Under this identification, the above epimorphism is
exactly the canonical counit for $\overline M$, since
\[
\delta_{\overline M}\bigl(G(q)(m)\otimes_Ba\bigr)
=q(m)\cdot a=q(m\cdot a).
\]
Hence $\overline M$ is again $0$-generated. Since $\overline M$ is
torsion-free by Lemma~\ref{lem:kappa-quotient}, the torsion-free case
applies to $\overline M$. Equivalently, on passing to torsion-free
factors the preceding identity gives
\[
\overline\delta_{\overline M}\,\overline F(G(q))
=\overline\delta_M.
\]
Both $G(q)$ and $\overline F(G(q))$ are isomorphisms, so the displayed
morphism
\[
\overline\delta_M:
\overline{M^{\operatorname{co}H}\otimes_BA}\longrightarrow \overline M
\]
is an isomorphism.
\end{proof}

\begin{theorem}\label{thm:main-equivalence}
Let $H$ be a monoidal BiHom-Hopf algebra with a fixed Haar integral
$\lambda\in H^*$, let $A$ be a right
$H$-BiHom-comodule algebra, and put $B=A^{\operatorname{co}H}$. Let
$(\mathcal{M}_A^H)_{\operatorname{tf},0}$ be the full subcategory of
$\mathcal{M}_A^H$ consisting of all right relative BiHom-Hopf modules
that are torsion-free with respect to $\lambda$ and $0$-generated. Then
$\overline F=Q(-\otimes_BA)$ and $G=(-)^{\operatorname{co}H}$ give an
equivalence of categories
\[
\mathcal{M}_B \simeq (\mathcal{M}_A^H)_{\operatorname{tf},0}.
\]
\end{theorem}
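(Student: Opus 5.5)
The plan is to assemble the equivalence from the lemmas already proved. There are three steps: check that the two functors land in the right categories, then show that the factor unit and the factor counit are natural isomorphisms.

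\emph{Step 1: $\overline F$ lands in $(\mathcal M_A^H)_{\operatorname{tf},0}$.} For $N\in\mathcal M_B$, the object $\overline F(N)=\overline{N\otimes_BA}$ is torsion-free by Lemma~\ref{lem:kappa-quotient}. It is $0$-generated by the same rewriting used for $Z$ in the proof of Lemma~\ref{lem:factor-counit-isomorphism}:
\[
\overline{n\otimes_Ba}=\bigl(\overline{\alpha_N^{-1}(n)\otimes_B1_A}\bigr)\cdot a,
\]
where the first factor is coinvariant because its preimage is coinvariant (as in the proof of Theorem~\ref{thm:induction-coinvariant-adjunction}) and $q_N$ is colinear. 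Functoriality of $\overline F$ is Proposition~\ref{prop:modified-induction-functor}. We then take $G$ restricted to the full subcategory $(\mathcal M_A^H)_{\operatorname{tf},0}$; since $G$ is already a functor $\mathcal M_A^H\to\mathcal M_B$, nothing further is needed.

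\emph{Step 2: the factor unit.} The factor unit is $\overline\eta_N=G(q_N)\circ\eta_N$. It is natural in $N$ because $\eta$ is natural (Theorem~\ref{thm:induction-coinvariant-adjunction}) and $q$ is natural, since $Q$ is a functor (Proposition~\ref{prop:torsion-free-reflector}). By Lemma~\ref{lem:factor-unit-isomorphism} each component is an isomorphism, so $\overline\eta:\operatorname{id}_{\mathcal M_B}\Rightarrow G\overline F$ is a natural isomorphism.

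\emph{Step 3: the factor counit.} For $M$ torsion-free and $0$-generated we have $\overline M=M$, so $\overline\delta_M:\overline F G(M)\to M$, $\overline{m\otimes_Ba}\mapsto m\cdot a$. This map is an isomorphism in $\mathcal M_A^H$ by Lemma~\ref{lem:factor-counit-isomorphism}. For naturality, take a morphism $v:M\to M'$ between such objects. Then
\[
v\,\overline\delta_M\bigl(\overline{m\otimes_Ba}\bigr)=v(m)\cdot a=\overline\delta_{M'}\,\overline F G(v)\bigl(\overline{m\otimes_Ba}\bigr),
\]
and since classes of pure tensors span, $v\,\overline\delta_M=\overline\delta_{M'}\,\overline F G(v)$. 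Alternatively, this follows from naturality of $\delta$ by passing to quotients. Hence $\overline F G\cong\operatorname{id}$ on $(\mathcal M_A^H)_{\operatorname{tf},0}$, and together with Step 2 the two functors are quasi-inverse.

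As a sanity check that $\overline\eta$ and $\overline\delta$ are compatible, both arise from the restricted adjunction of Proposition~\ref{prop:factor-adjunction}, so the triangular identities hold. An equivalence only requires the two natural isomorphisms, so this is not needed. The only delicate point, and the expected obstacle, is naturality of the factor counit. Well-definedness on the quotient by $\kappa$ has already been checked in the proof of Lemma~\ref{lem:factor-counit-isomorphism}, so this reduces to the routine computation on generators given above.
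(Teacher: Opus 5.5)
Your proposal is correct and follows the paper's proof. You show $\overline F$ lands in $(\mathcal M_A^H)_{\operatorname{tf},0}$ via the rewriting $\overline{n\otimes_Ba}=\bigl(\overline{\alpha_N^{-1}(n)\otimes_B1_A}\bigr)\cdot a$, then invoke Lemmas~\ref{lem:factor-unit-isomorphism} and~\ref{lem:factor-counit-isomorphism} for the unit and counit isomorphisms, and your explicit checks that $\overline\eta$ and $\overline\delta$ are natural supply a detail the paper leaves implicit.
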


\begin{proof}
For every $N\in\mathcal{M}_B$, the object $\overline F(N)$ is
torsion-free by construction. It is $0$-generated because every element
$\overline{n\otimes_Ba}$ can be written as
\[
\overline{n\otimes_Ba}
=
\bigl(\overline{\alpha_N^{-1}(n)\otimes_B1_A}\bigr)\cdot a
\quad
\text{and}
\quad
\overline{\alpha_N^{-1}(n)\otimes_B1_A}
\in
(\overline F(N))^{\operatorname{co}H}.
\]
Thus $\overline F$ takes values in
$(\mathcal{M}_A^H)_{\operatorname{tf},0}$.

By Lemma~\ref{lem:factor-unit-isomorphism}, the unit $N\longrightarrow G\overline F(N)$ is an isomorphism for every $N\in\mathcal{M}_B$. By
Lemma~\ref{lem:factor-counit-isomorphism}, the counit $\overline F G(M)\longrightarrow M$ is an isomorphism for every $M\in(\mathcal{M}_A^H)_{\operatorname{tf},0}$. Therefore, $\overline F$ and $G$ are quasi-inverse equivalences.
\end{proof}

\begin{example}\label{ex:c7-yau-twist}
Let $\Gamma=C_7=\langle u\mid u^7=1\rangle$ and
$\Lambda=C_2=\langle v\mid v^2=1\rangle$; exponents of $u$ and $v$
are read modulo $7$ and $2$, respectively.  Start with the ordinary
group Hopf algebra $\mathbb{k}\Gamma$ and the commuting Hopf
automorphisms
\[
\alpha_H(u^i)=u^{2i},\qquad \beta_H(u^i)=u^{3i}.
\]
Here $\alpha_H\ne\beta_H$ and
$\beta_H\ne\alpha_H^{-1}$.  The standard Yau twist
\cite{GrazianiMakhloufMeniniPanaite2015} has
\begin{align*}
u^i\cdot_Hu^j&=u^{2i+3j},&
\Delta_H(u^i)&=u^{4i}\otimes u^{5i},&
S_H(u^i)&=u^{-i}.
\end{align*}
The functional
\[
\lambda\left(\sum_{i=0}^6c_i u^i\right)=c_0
\]
is a Haar integral. Checking the basis elements $u^i$ verifies both
integral identities, since multiplication by $4$ and by $5$ permutes
$\mathbb Z/7\mathbb Z$.

Let $A=\mathbb{k}[\Gamma\times\Lambda]$ and define
\begin{align*}
\alpha_A(u^i\otimes v^\ell)&=u^{2i}\otimes v^\ell,&
\beta_A(u^i\otimes v^\ell)&=u^{3i}\otimes v^\ell,\\
(u^i\otimes v^\ell)\cdot_A(u^p\otimes v^q)
&=u^{2i+3p}\otimes v^{\ell+q}.
\end{align*}
Define
\[
\rho_A(u^i\otimes v^\ell)
=(u^{4i}\otimes v^\ell)\otimes u^{3i}.
\]
The compatibility identities for $\alpha$ and $\beta$ follow from the
commutativity of the two automorphisms. The coassociativity and counit
identities are verified directly:
\begin{align*}
(\alpha_A^{-1}\otimes\Delta_H)\rho_A(u^i\otimes v^\ell)
&=(u^{2i}\otimes v^\ell)\otimes u^{5i}\otimes u^i\\
&=(\rho_A\otimes\beta_H^{-1})\rho_A(u^i\otimes v^\ell),\\
(\operatorname{id}_A\otimes\varepsilon_H)\rho_A(u^i\otimes v^\ell)
&=u^{4i}\otimes v^\ell
=\alpha_A^{-1}(u^i\otimes v^\ell).
\end{align*}
Moreover,
\begin{align*}
\rho_A\bigl((u^i\otimes v^\ell)\cdot_A(u^p\otimes v^q)\bigr)
&=(u^{i+5p}\otimes v^{\ell+q})\otimes u^{6i+2p}\\
&=\rho_A(u^i\otimes v^\ell)\,\rho_A(u^p\otimes v^q),
\end{align*}
so $A$ is a right $H$-BiHom-comodule algebra.

Its coinvariant algebra and averaging projection are
\begin{align*}
B=A^{\operatorname{co}H}
&=\bigoplus_{\ell=0}^1\mathbb{k}(1\otimes v^\ell)
\cong\mathbb{k}\Lambda,\\
\pi_A(u^i\otimes v^\ell)
&=\begin{cases}
1\otimes v^\ell,&i=0,\\
0,&i\ne0.
\end{cases}
\end{align*}
In particular, $\kappa(A)=0$.  Indeed, if
$0\ne x=\sum c_{i,\ell}u^i\otimes v^\ell$, choose $i_0$ with
$\sum_\ell c_{i_0,\ell}v^\ell\ne0$ and put $a=u^{4i_0}\otimes1$.
Then
$\pi_A(x\cdot_Aa)=\sum_\ell c_{i_0,\ell}(1\otimes v^\ell)\ne0$.
The regular object $A$ is also $0$-generated, since
$a=1_A\cdot_A\beta_A^{-1}(a)$.  Thus $A$ belongs to
$(\mathcal M_A^H)_{\operatorname{tf},0}$, and the equivalence of
Theorem~\ref{thm:main-equivalence} sends the regular $B$-module to an
object isomorphic to $A$.
\end{example}

\begin{corollary}\label{cor:whole-category-equivalence}
If every object of $\mathcal{M}_A^H$ is torsion-free and $0$-generated,
then the original induction functor $F$ and the coinvariant functor $G$ give an equivalence of categories
\[
\mathcal{M}_B\simeq\mathcal{M}_A^H.
\]
\end{corollary}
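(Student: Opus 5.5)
The plan is to reduce Corollary~\ref{cor:whole-category-equivalence} to Theorem~\ref{thm:main-equivalence} by showing that, under the hypothesis, the torsion-free reflector $Q$ is naturally isomorphic to the identity. Then $\overline F=QF\cong F$, and the adjunction $F\dashv G$ of Theorem~\ref{thm:induction-coinvariant-adjunction} itself has invertible unit and counit.

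\textbf{Step 1: the target subcategory is everything.} By hypothesis every $M\in\mathcal M_A^H$ satisfies $\kappa(M)=0$ and is $0$-generated. Hence $(\mathcal M_A^H)_{\operatorname{tf},0}=\mathcal M_A^H$ as full subcategories, and Theorem~\ref{thm:main-equivalence} already gives an equivalence $\mathcal M_B\simeq\mathcal M_A^H$ implemented by $\overline F$ and $G$.

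\textbf{Step 2: identify $\overline F$ with $F$.} In particular $F(N)=N\otimes_BA$ is torsion-free, so $\kappa(N\otimes_BA)=0$. The quotient map $q_{F(N)}:F(N)\to\overline F(N)$ is therefore the identity on underlying spaces, and it is an isomorphism in $\mathcal M_A^H$. Naturality in $N$ follows from Proposition~\ref{prop:modified-induction-functor}, because $\overline F(f)$ is induced by $F(f)$. Thus $q:F\Rightarrow\overline F$ is a natural isomorphism. A functor naturally isomorphic to an equivalence is an equivalence, so $F$ and $G$ are quasi-inverse.

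\textbf{Step 3: direct check on the original unit and counit.} Alternatively, one can verify the original adjunction directly. The unit $\eta_N$ is an isomorphism by Lemma~\ref{lem:ordinary-unit-isomorphism}, which needs no hypothesis. For the counit, note that $\overline\delta_M\circ q_{F(G(M))}=q_M\circ\delta_M$. Since $F(G(M))$ and $M$ are torsion-free, both quotient maps are identities, so $\delta_M=\overline\delta_M$. This map is an isomorphism by Lemma~\ref{lem:factor-counit-isomorphism}, since $M$ is $0$-generated.

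The only point needing care is Step~2, which uses the hypothesis on the objects $F(N)$ and not just on $M$; this is fine because the hypothesis covers all objects. Beyond that there is no real obstacle.
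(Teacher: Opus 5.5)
Your proposal is correct and follows the paper's own argument. Torsion-freeness of every $F(N)$ gives $\kappa(N\otimes_BA)=0$, hence $\overline F\cong F$ via the quotient maps; $0$-generation makes $(\mathcal M_A^H)_{\operatorname{tf},0}$ all of $\mathcal M_A^H$, so Theorem~\ref{thm:main-equivalence} applies. Your explicit naturality of $q:F\Rightarrow\overline F$, and the optional direct check in Step~3 via Lemmas~\ref{lem:ordinary-unit-isomorphism} and~\ref{lem:factor-counit-isomorphism}, only make precise what the paper's identification $\overline F(N)=F(N)$ leaves implicit.
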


\begin{proof}
Torsion-freeness gives $\kappa(N\otimes_BA)=0$ for every
$N\in\mathcal M_B$, so $\overline F(N)=F(N)$. The $0$-generation
assumption identifies $(\mathcal M_A^H)_{\operatorname{tf},0}$ with
$\mathcal M_A^H$, and Theorem~\ref{thm:main-equivalence} applies.
\end{proof}

\end{document}